\theoremstyle{plain}
\newtheorem{theorem}{Theorem}[section]
\newtheorem{condition}[theorem]{Condition}
\newtheorem{corollary}[theorem]{Corollary}
\newtheorem{definition}[theorem]{Definition}
\newtheorem{lemma}[theorem]{Lemma}
\newtheorem{proposition}[theorem]{Proposition}
\newtheorem{remark}[theorem]{Remark}
\newcommand{\mr}{\mathbb{R}}
\newcommand{\mc}{\mathcal}
\newcommand{\tl}{\tilde}
\newcommand{\lan}{\langle}
\newcommand{\ran}{\rangle}
\newcommand{\diverg}{\mathrm{div}}
\newcommand{\tr}{\mathrm{tr}}
\newcommand{\mt}{\mathbb{T}}
\newcommand{\eps}{\varepsilon}
\numberwithin{equation}{section}
\title{Existence and uniqueness for stochastic 2D Euler flows with bounded vorticity}
\author{Zdzis\l aw Brze\'zniak, Franco Flandoli and  Mario Maurelli}
\date{}
\begin{document}

\maketitle

\begin{abstract}
The strong existence and the pathwise uniqueness of solutions with $L^{\infty}$-vorticity of the 2D stochastic Euler equations are proved. The noise is
multiplicative and it involves the first derivatives. A Lagrangian approach is implemented, where a stochastic flow solving a nonlinear flow equation is constructed. The stability under regularizations is also proved.

\end{abstract}

\section{Introduction}

The aim of this paper is to prove the strong existence and the pathwise uniqueness of $L^\infty$ solutions to the stochastic 2D Euler equation in vorticity form
\begin{equation}
d\xi+u^{\xi}\cdot\nabla\xi\,dt+\sum_{k=1}^{\infty}\sigma_{k}\cdot\nabla\xi\circ dW^{k}=0,\qquad\xi|_{t=0}=\xi_{0},
\label{Euler Strat}
\end{equation}
where the initial vorticity $\xi_{0}$ also belongs to the $L^{\infty}$ space. The equation above is subject to the periodic boundary conditions and thus can be reformulated as a problem on a 2-dimensional torus $\mathbb{T}^2=\big(\mathbb{R}/\mathbb{Z}\big)^2$, see for instance \cite[chapter 2]{Temam_1983}. In other words the space variable is assumed to be an element of $\mathbb{T}^2$  and all fields are assumed to be $1$-periodic (or simply defined on $\mathbb{T}^2$). The noise coefficients $\sigma_{k}$'s are bounded, regular enough, divergence-free vector fields, $(W^{k})^\infty_{k=1}$ is a family of independent Brownian motions and the velocity field $u^{\xi}$ is defined as
\begin{equation*}
u^{\xi}_t(x)=K*\xi_t(x)=\int_{\mt^2}K(x-y)\, \xi_t(y)\,dy,\ \ x\in\mathbb{T}^2,
\end{equation*}
where $K=\nabla^\perp G=(-\partial_2G,\partial_1G)$ and $G$ is the Green function of the Laplacian on the torus $\mathbb{T}^2$ with mean $0$, i.e.,
\begin{equation*}
u^\xi=-\nabla^\perp(-\Delta)^{-1}\xi.
\end{equation*}
We will also prove the stability of the solutions under regularization of the kernel $K$.

The Stratonovich form is the natural one for several reasons, including physical intuition related to the Wong-Zakai principle and the fact that an
It\^{o} term of the form $\sum_{k=1}^{\infty}\sigma_{k}\cdot\nabla\xi dW^{k}$ would require a compensating second order operator to hope for a well defined system, see \cite{Roz}. Besides, the Stratonovich form preserves the $L^2$ norm of the solution and is the right one to deal with manifold-valued SPDEs, see \cite{brzgolond}. However, for the opportunity of mathematical analysis, we will formally rewrite the equation in the It\^{o} form
\begin{eqnarray}
d\xi +u^\xi\cdot\nabla\xi\,dt &+&\sum^\infty_{k=1}\sigma_k\cdot\nabla\xi dW^k -\frac12\sum^\infty_{k=1}(\sigma_k\cdot\nabla)\sigma_k\cdot\nabla\xi \,dt\nonumber\\& =& \frac12\sum^\infty_{k=1}\tr[\sigma_k\sigma_k^\ast D^2\xi]\,dt,
\label{Itoform}
\end{eqnarray}
and we will give a rigorous interpretation of the latter one (under some simplified assumptions). Nonetheless it is useful to think sometimes heuristically in
form of the Stratonovich expression and it would be misleading to believe that the equation has a parabolic character due to the term $\tr\left(aD^{2}\xi\right)$ in the It\^{o} formulation.

The noise in equation (1.1) has a very special form, compared to general
abstract models of Stochastic Partial Differential Equations (SPDEs). Our aim
is not an abstract generality. We have chosen this noise for two
reasons. Firstly,  because it occupies a relevant position in the Mathematical Physics
literature on fluids and secondly because it is of transport type, hence allowing us to
use special tools related to the transport equations (flows, $L^{\infty}$-bounds).
The applied and theoretical literature on SPDEs driven by this type of noise is rich,
see for instance \cite{BaxHar}, \cite{BeGaKu}, \cite{CeVi}, \cite{FaGaVe}, \cite{Gaw}, \cite{KuMu},
\cite{LeRa}, \cite{MikRoz}, \cite{Kun2}, in particular for its relation with turbulent
transport of passive scalars and the so called Kraichnan model (\cite{Kra1}, \cite{Kra2}), one of the
most remarkable theories  where stochastic models have been applied with
success to explain phenomena in fluid mechanics. The transport structure of
the nonlinear deterministic part of the equation (the vorticity in 2D\ is only
transported) and of the stochastic part (Stratonovich choice is important
here), allow one to use stochastic the flows and to control the $L^{\infty}$-norm
of solutions (the vorticity) by the $L^{\infty}$-norm of initial conditions.
This control is $\omega$-wise, uniform also in $\omega$ in $\Omega$. Thus, having assumed
that initial vorticity is bounded, the solution is uniformly bounded in all
parameters (also $\omega$), opposite to several other stochastic cases, like the additive noise. This property is an important tool of our approach and it cannot be readily extended to other stochastic perturbations of the Euler
equations.

What concerns the theory of the deterministic Euler equations, the uniqueness for $L^{\infty}$-vorticity in the deterministic case is the celebrated result of Wolibner \cite{Wolibner} and Yudovich (\cite{Yud1}, \cite{Yud}). In addition to an excellent recent monograph \cite{Majda+Bertozzi} where some additional information about the trajectory method can be found, one should also mention more recent publications as for instance a recent review paper \cite{Chemin} by Chemin and a new approach to the old non-uniqueness results of Schaeffer and Shnirelman by  De Lellis and  Sz\`ekelyhidi in \cite{DeL+S}.

The literature on the stochastic Euler equations counts a number of works, including
\cite{Bes1}, \cite{Bes2}, \cite{Bes3}, \cite{BesFla}, \cite{BrzPes}, \cite{CapCut}, \cite{FGPEuler}, \cite{GH+V}, \cite{Kim1}, \cite{Kim2}, \cite{MikVal1}, \cite{MikVal2}, \cite{Yokoyama}. The
differences are in the structure of the noise, the results and topologies
involved and sometimes the domain and boundary conditions. A full discussion
is not possible so we limit ourselves to few remarks. Some of the works deal
with additive noise, some others with more general, namely multiplicative,
noise but not of the form treated here which involves the derivatives of the
solution, and one paper with noise with derivatives of the solution. When the
noise is additive, the theory is more complete, also because the equation can
be studied pathwise. First results were given in \cite{BesFla}, where the existence is
proved when the initial data belongs to the space $V$ and the solution is an $H$-valued
continuous and $V$-valued square integrable process (where $V$ is the space of
divergence free vector fields with finite enstrophy and $H$ is the space of
divergence free square integrable vector fields, periodic in an appropriate sense. However this solution, constructed pathwise on a given
probability space, is not known to be progressively measurable. Moreover, if
the vorticity of the initial data is bounded and the external forces
(deterministic and random) satisfy certain assumptions, the solution is proved
to be unique. These results in the additive noise case have been improved and
generalized in the interesting paper \cite{Kim1}, based on different techniques with
respect to \cite{BesFla}, which relaxes various regularity and boundary conditions on
the noise for the result of existence and uniqueness of solutions with bounded
vorticity and proves very careful measurability properties in the case of
solutions in $V$, those which are not necessarily unique. Let us also mention that in the additive noise case the more recent paper \cite{GlaSveVic} gives delicate
$L^{\infty}$-vorticity estimates on invariant measures for the stochastic
Navier-Stokes equations with and their inviscid limit. Multiplicative noise,
depending on the velocity field $u$ (and not on the gradient) has been
initially treated in the paper \cite{CapCut} by nonstandard analysis tools. That paper
is devoted to the stochastic Euler equations on a 2-dimensional torus and the
authors prove the existence of a solution on the Loeb space and the existence of a
corresponding notion of statistical solution and it does not deal with
the uniqueness. Then, in the paper \cite{BrzPes} the authors prove the existence (but again not
the uniqueness) of a solution to a problem with multiplicative noise as in \cite{CapCut}
and possibly unbounded domains, but the state space is the space the space $H$
intersected with the Sobolev space $H^{1,p}$ for $p>2$. In this way they are
able to prove the existence of solutions which are H\"{o}lder continuous with
respect to the space variables. The question of uniqueness in the case of
multiplicative noise, which was left open by these and other works (like \cite{Bes1}),
has been recently investigated in the paper \cite{GH+V}, however only for
one-dimensional Brownian motion (so that the Doss-Sussmann transformation can
be used). Finally, let us mention the recent paper \cite{Yokoyama} by Yokoyama, which is
the closest to our model (the stochastic Euler equations in Stratonovitch form
with the noise coefficients depending linearly on the gradient of the
solution), where the author proves the existence of a martingale solution with
the state space $V$; the paper does not deal with the uniqueness.

We solve here the problem in the space $L^{\infty}$ following the Lagrangian approach of \cite{MarPul}. It is based, in the stochastic case, on the investigation of the stochastic flow equation
\begin{eqnarray*}
\Phi_{t}(x)&=&x+\int_{0}^{t}\int_{\mathbb{T}^{2}}K(\Phi_{s}(x)-\Phi_{s}(y))\, \xi_{0}(y)dy\\
&+&\sum_{k}\int_{0}^{t}\sigma_{k}(\Phi_{s}(x))dW_{s}^{k},\ \ t\in[0,T],\ x\in\mathbb{T}^2
\end{eqnarray*}
which is a problem of interest in itself, even when the kernel $K$ is smooth. This equation is not trivial because of the global dependence of $\Phi_t (x)$ on $\left(  \Phi_{s}(y)\right)  _{y\in\mathbb{T}^{2}}$ and the difficulty to develop stochastic calculus (for instance a fixed point argument) in the space of (measure preserving, continuous) maps $\psi:\mathbb{T}^{2}\rightarrow\mathbb{T}^{2}$. The approach inspired by \cite{MarPul} allows us to study this equation and apply the result to the existence and the uniqueness of equation \eqref{Euler Strat} in $L^{\infty}$.

%
%

\subsection*{Acknowledgement}

We thank an anonymous referee for several remarks and suggestions, which help to clarify some points and improve the exposition. The first named  author would like to thank the Dipartimento di Matematica, Universit\`a di Pisa, for the kind hospitality.

\section{The main results}

Before stating the results, we list the hypotheses with some preliminary remarks.

%

\begin{condition}
In the paper, we will always assume that $\xi_0$, the initial vorticity, belongs to the space $L^\infty(\mt^2)$.
\end{condition}

\begin{condition}\label{condW}
The family of processes $W=(W^k)^\infty_{k=1}$ is a cylindrical Brownian motion (i.e.\ $W^k$'s are independent Brownian motions), defined on a probability space $(\Omega,\mc{A},P)$, with respect to the filtration $\mathbb{F}=(\mc{F}_t)_{t\in[0,T]}$.
\end{condition}

\begin{condition}\label{condsigma}
The vector fields $\sigma_k$'s are divergence-free and belong to $C^{0,1}(\mt^2)$ (Lipschitz periodic functions, hence a.e.\ differentiable); moreover the family $(\sigma_k)^\infty_{k=1}$ is in $W^{1,\infty}(\ell^2)$, that is
\begin{equation*}
L_\sigma^2:=\sup_{x\in\mt^2}\sum^\infty_{k=1}|\sigma_k(x)|^2+\left\|\sum^\infty_{k=1}|D\sigma_k|^2\right\|_{L^\infty}<+\infty.
\end{equation*}
We call $a(x):=\sum^\infty_{k=1}\sigma_k(x)\sigma_k(x)^\ast $ ($A^\ast $ denotes the transpose matrix of $A$). We assume also that $a\equiv cI_2$, where $c$ is a non-negative constant (possibly equal to $0$) and $I_2$ is the constant identity matrix.
\end{condition}

\begin{remark}\label{simplicity}
If $a(x)\equiv cI_2$, for all $x$ in $\mathbb{T}^2$, the It\^o formulation \eqref{Itoform} of the stochastic Euler Equations simplifies to
\begin{equation}
d\xi +u^\xi\cdot\nabla\xi\,dt +\sum^\infty_{k=1}\sigma_k\cdot\nabla\xi dW^k = \frac12 c\Delta\xi\,dt.\label{simplItoform}
\end{equation}
Indeed, since the the vector fields $\sigma_k$'s are divergence-free, the first order It\^o correction term, namely $\frac12\sum_k(\sigma_k\cdot\nabla)\sigma_k\cdot\nabla\xi\,dt$, disappears:
\begin{equation*}
\sum_k\sum_i\sigma_{k,i}(x)\partial_i\sigma_{k,j}(x)=\sum_i\partial_i(\sum_k\sigma_{k,i}(x)\sigma_{k,j}(x))=\sum_i\partial_ia_{ij}(x)=0.
\end{equation*}
\end{remark}

\begin{remark}
Condition $a(x)\equiv cI_2$, $x\in\mathbb{T}^2$, can be avoided at the price of requiring more regularity on the functions $\sigma_k$'s and of a few additional computations, which would obscure the main arguments. Indeed, the fact that $a$ is constant implies the absence of the first order It\^o correction term, which contains the derivatives of $\sigma$, and that the operator $\frac12\tr[aD^2]=\frac12c\Delta$ commutes with the convolution with a given function; this will avoid the use of a second order commutator lemma (not difficult but boring and requiring maybe more regularity on the $\sigma_k$'s).
\end{remark}

\begin{remark}
Let us briefly discuss examples of noise covered by the class above. The
trivial example of a noise term of the form $\nabla\xi_{t}\cdot dW_{t}$ where
$W$ is a 2-dimensional Brownian motion is covered by taking $\sigma_{k}%
=e_{k}$ for $k=1,2$ (where $\left(  e_{1},e_{2}\right)  $ is the canonical
basis of $\mathbb{R}^{2}$) and $\sigma_{k}=0$ for $k\geq3$. In this case it
should be noticed that the stochastic Euler equations can be reduced to the
classical deterministic ones by the simple transformation $\widetilde{\xi
}\left(  t,x\right)  =\xi\left(  t,x+W_{t}\right)  $. More than this one, we
are mainly motivated by the examples described in the Mathematical Physics
literature quoted in the Introduction, where the noise term has heuristically
the form $\nabla\xi\left(  t,x\right)  \cdot\partial_{t}W\left(  t,x\right)
$, for a space-dependent random field $W\left(  t,x\right)  $, Brownian in
time, with a given incremental covariance function $Q\left(  x,y\right)
=E\left[  W\left(  1,x\right)  \otimes W\left(  1,y\right)  \right]  $, sometimes
prescribed through its Fourier spectrum like $Q\left(  x-y\right)
=\int_{\mathbb{Z}^{2}}e^{ik\cdot x}f\left(  \left\vert k\right\vert \right)
dk$, for suitable functions $f$. A rigorous and simple way to deal with such
space-time noise (correlated in space) is the one adopted above, namely to
prescribe a sequence of independent real valued Brownian motions $W_{t}^{k}$
and a sequence of vector fields $\sigma_{k}\left(  x\right)  $. The
space-dependent noise $W\left(  t,x\right)  $ is then given by $W\left(
t,x\right)  =\sum_{k=1}^{\infty}\sigma_{k}\left(  x\right)  W_{t}^{k}$ and the
function $Q\left(  x,y\right)  =\sum_{k=1}^{\infty}\sigma_{k}\left(  x\right)
\otimes\sigma_{k}\left(  y\right)  $ is its incremental covariance. If one starts with a prescribed covariance function $Q(x,y)$ (with suitable properties), the $\sigma_k$'s are an orthonormal basis of a certain Hilbert space, thus their form is not explicitly given (though their existence is guaranteed, see \cite{BaxHar} and \cite{LeRa}).

However, we should notice that in comparison with the literature on the Kraichnan model of turbulent advection (related to the original Kraichnan's papers \cite{Kra1}, \cite{Kra2}), we impose regularity properties on the vector fields $\sigma_{k}$
which forbid us from considering certain singular examples treated there. The
covariance function $Q\left(  x,y\right)  $, corresponding to our case, is always relatively regular,
while it scales with fractional powers of $\left\vert x-y\right\vert $ in
Kraichnan model, see for example \cite{Gaw}, \cite{LeRa}.
\end{remark}


\begin{definition}
Let $\xi$ be an element of $L^\infty([0,T]\times\mt^2\times \Omega)$. We say that $\xi$ is $\mathbb{F}$-weakly progressively measurable if, for every $f$ in $L^1(\mt^2)$, the process $t\rightarrow\lan\xi_t,f\ran=\int_{\mt^2}f\xi_tdx$ is $\mathbb{F}$-progressively measurable.
\end{definition}

Given an element $w$ in $L^\infty(\mt^2)$, we will write
\begin{equation*}
u=u^w=K*w.
\end{equation*}
If $w$ is also time-dependent, we will write $u^w_t=u^{w_t}$. It is well known, see Corollary \ref{logLipu}, that $|u^w(x)-u^w(y)|\le L_K\|w\|_{L^\infty}|x-y|(1-\log|x-y|)$ for some constant $L_K$ if $|x-y|\le1$.

Now we give a precise definition of a solution. We use the It\^o formulation, having in mind Remark \ref{simplicity}. In what follows, $\lan f,g\ran:=\int_{\mt^2}fgdx$ denotes the scalar product in $L^2(\mt^2)$.

\begin{definition}
Let $\xi_0$ be in $L^\infty(\mt^2)$. A distributional $L^\infty$ solution to the stochastic Euler vorticity equation \eqref{simplItoform} is an $\mathbb{F}$-weakly progressively measurable element $\xi$ in $L^\infty([0,T]\times\mt^2\times \Omega)$, such that, for every $\varphi$ in $C^\infty(\mt^2)$, it holds $P$-a.s.
\begin{eqnarray}\nonumber
\lan\xi_t,\varphi\ran&=& \lan\xi_0,\varphi\ran+ \int^t_0\lan\xi_r,u^\xi_r\cdot\nabla\varphi\ran dr+ \sum_k\int^t_0\lan\xi_r,\sigma_k\cdot\nabla\varphi\ran dW_r\\
&+& \frac12\int^t_0\lan\xi_r,\tr[aD^2\varphi]\ran dr\ \ \forall t \in[0,T].  \label{distribvort}
\end{eqnarray}
\end{definition}

It is implicit in the definition that the process $\lan\xi_t,\varphi\ran$ has continuous trajectories.

\begin{remark}
If a process $\xi \in L^\infty([0,T]\times\mt^2\times \Omega)$ is weakly progressive measurable then so is the process $u^\xi\xi$. Indeed it implies that, for every $h$ in $L^1(\mt^2\times\mt^2)$, the process
\begin{equation}
t\mapsto \int_{\mt^2}\int_{\mt^2}\xi(t,x)\, \xi(t,y)h(x,y)dxdy
\end{equation}
is progressive measurable (this can be verified first for $h$ of the form $h(x,y)=f(x)g(y)$, then approximating every $h$ with sums of such separable functions). Now, for a test function $\varphi$, it is enough to write $\int u^\xi\xi\cdot\varphi dx$ as
\begin{equation*}
\int_{\mt^2}\int_{\mt^2}K(x-y)\, \xi_t(y)\, \xi_t(x)\cdot\varphi(x)dxdy
\end{equation*}
and take $h(x,y)=K(x-y)\varphi(x)$.
\end{remark}

The main result about the stochastic Euler vorticity equation is as follows.

\begin{theorem}\label{mainvort}
Given $\xi_0$ in $L^\infty(\mt^2)$ and the cylindrical Brownian motion $W$ (with the associated filtration), under Conditions \ref{condW} and \ref{condsigma} on the coefficients of the noise, the stochastic Euler vorticity equation \eqref{simplItoform} admits a unique $L^\infty$ distributional solution.
\end{theorem}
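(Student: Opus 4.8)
The plan is to follow the Lagrangian route advertised in the introduction and of \cite{MarPul}, reducing the Eulerian problem (\ref{simplItoform}) to the nonlinear stochastic flow equation
\begin{equation*}
\Phi_t(x)=x+\int_0^t\int_{\mt^2}K(\Phi_s(x)-\Phi_s(y))\xi_0(y)\,dy\,ds+\sum_k\int_0^t\sigma_k(\Phi_s(x))\,dW^k_s.
\end{equation*}
The first step is to construct, on the given probability space and filtration, a stochastic flow $\Phi=(\Phi_t)_{t\in[0,T]}$ of measure-preserving homeomorphisms of $\mt^2$ solving this equation, and then to \emph{define} the candidate vorticity as the push-forward $\xi_t:=\xi_0\circ\Phi_t^{-1}$. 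Because $u^\xi$ and the $\sigma_k$ are divergence-free, $\Phi_t$ preserves Lebesgue measure, so $\|\xi_t\|_{L^\infty}=\|\xi_0\|_{L^\infty}$ and $\xi\in L^\infty([0,T]\times\mt^2\times\Omega)$ automatically. Existence of a distributional solution should then follow by applying the It\^o formula to $\varphi(\Phi_t(x))$ for $\varphi\in C^\infty(\mt^2)$, integrating in $x$ against $\xi_0$, and changing variables $y=\Phi_t(x)$; the hypothesis $a\equiv CI_2$ makes the second order term exactly $\frac12 C\Delta\varphi$ (cf.\ Remark \ref{simplicity}), so the result is precisely (\ref{distribvort}).

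For the flow itself, the obstruction is that the Biot--Savart drift is only log-Lipschitz (Corollary \ref{logLipu}) and couples $\Phi_t(x)$ to the entire family $(\Phi_s(y))_y$, so a naive contraction fails. First I would regularize the kernel, replacing $K$ by a smooth $K^\eps$ and solving the regularized flow equation, which now has Lipschitz bounded coefficients, by a fixed point argument in a suitable space of measure-preserving continuous maps, yielding $\Phi^\eps$ and $\xi^\eps=\xi_0\circ(\Phi^\eps)^{-1}$. The key is a modulus-of-continuity estimate uniform in $\eps$: estimating $E|\Phi^\eps_t(x)-\Phi^\eps_t(x')|$ through the log-Lipschitz bound and the Burkholder--Davis--Gundy inequality on the martingale part leads to an Osgood-type differential inequality whose solution does not blow up. This uniform control gives tightness and permits passage to the limit $\eps\to0$; the same Osgood argument, applied to the difference of two solutions driven by the same Brownian motions, yields pathwise uniqueness of the flow, which is the stochastic analogue of Yudovich's argument \cite{Yud}. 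This step uses only boundedness and Lipschitz continuity of the $\sigma_k$, not the stronger $\ell^2(C^{0,1})$ assumption.

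The genuinely hard part is Eulerian uniqueness: showing that \emph{every} distributional $L^\infty$ solution $\xi$ coincides with the Lagrangian one. The strategy is to prove that an arbitrary solution is transported by its own flow, i.e.\ $\xi_t=\xi_0\circ\Psi_t^{-1}$, where $\Psi$ is the \emph{linear} stochastic flow generated by the fixed velocity $u^\xi$ together with the noise. Granted such a representation, $\Psi$ solves the same nonlinear flow equation as $\Phi$, and flow uniqueness forces $\xi_t=\xi_0\circ\Phi_t^{-1}$, the Lagrangian solution. Establishing the representation is a renormalization problem for the stochastic transport equation with log-Lipschitz drift: mollifying (\ref{distribvort}) by $\rho_\eps$ produces commutator errors of DiPerna--Lions type, now including stochastic commutators coming from the term $\sigma_k\cdot\nabla\xi\,dW^k$. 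One must show these commutators vanish in the limit; this is exactly where the stronger hypothesis $\sigma\in\ell^2(C^{0,1})$ and the choice $a\equiv CI_2$ enter, the latter eliminating the first order It\^o correction and making $\frac12\tr[aD^2]=\frac12C\Delta$ commute with convolution, so that the commutator estimates stay first order and tractable. Once renormalization holds, the solution of the linear stochastic transport equation with the given log-Lipschitz velocity is unique and coincides with the push-forward along $\Psi$.

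I expect the main obstacle to be precisely this stochastic renormalization/commutator step: controlling the DiPerna--Lions commutators under a merely log-Lipschitz (rather than Sobolev) velocity field, while simultaneously handling the It\^o corrections generated by the first-derivative noise, and verifying that the resulting Lagrangian representation is compatible with the nonlinear coupling so that uniqueness of the flow can be invoked to close the argument.
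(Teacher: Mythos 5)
Your proposal follows essentially the same Lagrangian route as the paper: existence by constructing the nonlinear measure-preserving flow and pushing $\xi_0$ forward, uniqueness by showing that an arbitrary distributional solution is transported by the linear flow generated by its own velocity $u^\xi$ (via mollification and DiPerna--Lions commutators, including the stochastic ones), and then invoking uniqueness of the nonlinear flow. The one place where you genuinely diverge is the construction of the flow itself: you regularize the kernel $K$ and pass to the limit in the nonlinear equation, whereas the paper first solves the \emph{linear} SDE with a fixed log-Lipschitz random drift (by mollifying the drift, using Kunita's theory for the regularized equation, and an Osgood estimate to get a Cauchy sequence in $C([0,T]\times\mt^2;L^p(\Omega))$), and then obtains the nonlinear flow by a Picard iteration on the map $\psi\mapsto G(\psi)$ in the metric $\sup_t\int_{\mt^2}E|\psi^1_t-\psi^2_t|\,dx$; your kernel-regularization argument is in fact carried out in the paper's stability section, so it is a viable alternative, though you should replace ``tightness'' by a direct Cauchy estimate on $\Phi^\eps-\Phi^\delta$ (otherwise you still have to identify the limit), and be aware that a na\"ive fixed point ``in a space of measure-preserving continuous maps'' is exactly what the paper warns is delicate --- the workable metric is $L^1$ in $x$ and $L^p$ in $\omega$, with measure preservation and continuity recovered a posteriori. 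One corrective remark on the step you flag as the main obstacle: the velocity field is \emph{not} merely log-Lipschitz for the purposes of renormalization; since $\xi\in L^\infty$, Calder\'on--Zygmund gives $\|D(K*\xi_t)\|_{L^p(\mt^2)}\le C_p\|\xi_t\|_{L^\infty}$ for every finite $p$, so $u^\xi$ lies in $W^{1,p}$ and the commutator lemma applies in its standard Sobolev form; the log-Lipschitz bound is needed only for the flow, while the $\ell^2(C^{0,1})$ hypothesis is, as you say, what controls the series of stochastic commutators.
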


\begin{remark}
Notice that the filtration is given a-priori. Thus both the existence and the uniqueness are in the strong sense: there exists a solution $\xi$ adapted to the (completed) Brownian filtration (the smallest possible filtration) and any solution, defined on a possibly larger filtered space, must coincide with $\xi$. The same kind of existence and uniqueness will hold for every equation we will meet.
\end{remark}

Theorem \ref{mainvort} will be proved by solving the associated non-local SDE:
\begin{eqnarray}\nonumber
\Phi_t(x)&=& x +\int^t_0\int_{\mt^2}K(\Phi_r(x)-\Phi_r(y))\, \xi_0(y)\,dy\,dr \\
&+&\sum_k\int^t_0\sigma_k(\Phi_r(x))dW^k_r.\label{stocEulerflow}
\end{eqnarray}
Notice that here the drift, namely
\begin{equation}
u^\Phi(t,x)=\int_{\mt^2}K(x-\Phi_t(y))\, \xi_0(y)dy,\label{expr_u}
\end{equation}
depends on the whole flow.



\begin{definition}
\begin{itemize}
\item A stochastic continuous flow is a measurable map $\Phi:[0,T]\times\mt^2\times\Omega\rightarrow\mt^2$ such that, for a.e.\ $\omega$ in $\Omega$, $\Phi(\omega): [0,T]\times\mt^2  \to \mt^2$ is continuous and, for every $x  \in \mt^2$, the process $\Phi(x) :[0,T]\times\Omega\rightarrow\mt^2$ is progressively measurable.
\item A stochastic continuous flow $\Phi$ is said to be measure-preserving iff\footnote{To avoid any ambiguity here and other similar situations we assume that there there exists a measurable set $\tilde \Omega$ of full $\mathbb{P}$-measure, such that for all $\omega \in \tilde \Omega$ and every $t\in [0,T]$, the map $\Phi_t(\omega): \mt^2 \to \mt^2$ preserves the Lebesgue measure on $\mt^2$.}, for a.e.\ $\omega$,$\Phi_t(\omega): \mt^2 \to \mt^2$ preserves the Lebesgue measure on $\mt^2$ for every $t$.
\end{itemize}
\end{definition}


\begin{definition}
We say that a stochastic continuous flow $\Phi$ is solution to the SDE \eqref{stocEulerflow} if, for every $x$, the process $X:=\Phi(x)$ solves the SDE
\begin{equation}
dX=u^\Phi(X)\,dt+\sum_k\sigma_k(X)dW^k
\end{equation}
with initial condition $X_0=x$.
\end{definition}



\begin{theorem}\label{mainflow}
Given $\xi_0$ in $L^\infty(\mt^2)$ and the cylindrical Brownian motion $W$ (with the associated filtration), there exists a unique measure-preserving stochastic flow solution to equation \eqref{stocEulerflow}. This solution is a continuous flow $\Phi$ of class $C^\alpha$ in space and $C^\beta$ in time, for some $\alpha>0$ and for every $\beta<1/2$.
\end{theorem}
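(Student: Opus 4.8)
The plan is to follow the Lagrangian strategy of \cite{MarPul}, adapted to the stochastic setting, via regularization of the singular Biot--Savart kernel. Because $K$ generates only a log-Lipschitz velocity field, a direct contraction argument in the space of flows is unavailable; instead I would first solve a regularized equation, derive estimates uniform in the regularization parameter, pass to the limit, and establish uniqueness separately by an Osgood-type argument.

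First I would fix a family of smooth divergence-free kernels $K^\eps=\nabla^\perp G^\eps$ (with $G^\eps$ a regularized Green function) converging to $K$ and solve, for each $\eps>0$, the regularized flow equation
\begin{equation*}
\Phi^\eps_t(x) = x + \int_0^t \int_{\mt^2} K^\eps(\Phi^\eps_r(x) - \Phi^\eps_r(y))\xi_0(y)\,dy\,dr + \sum_k \int_0^t \sigma_k(\Phi^\eps_r(x))\,dW^k_r.
\end{equation*}
Since $K^\eps$ is Lipschitz, the velocity $u^{\Phi^\eps}(t,\cdot)$ is Lipschitz in $x$ for each fixed flow, and the equation can be solved by a Picard/fixed-point scheme in a complete metric space of $\mathbb{F}$-adapted continuous flows, the nonlocal dependence being handled by iterating on the whole path $r\mapsto(\Phi^\eps_r(y))_y$. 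Measure-preservation of $\Phi^\eps_t$ follows from the divergence-free condition on both $u^{\Phi^\eps}$ (indeed $\diverg K^\eps=\diverg\nabla^\perp G^\eps=0$) and on the $\sigma_k$: the Jacobian satisfies a linear Stratonovich equation whose coefficients are precisely these divergences, so it stays equal to $1$. This also yields the key identity $u^{\Phi^\eps}(t,x)=(K^\eps*\xi^\eps_t)(x)$ with $\xi^\eps_t=\xi_0\circ(\Phi^\eps_t)^{-1}$, whence $\|\xi^\eps_t\|_{L^\infty}=\|\xi_0\|_{L^\infty}$ for all $t$.

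The uniform $L^\infty$ bound is the engine of the argument. Combined with Corollary \ref{logLipu}, it gives $|u^{\Phi^\eps}(t,x)-u^{\Phi^\eps}(t,y)|\le C\|\xi_0\|_{L^\infty}|x-y|(1-\log|x-y|)$, uniform in $\eps$ and $t$. From this I would derive, uniformly in $\eps$: (i) a spatial modulus $|\Phi^\eps_t(x)-\Phi^\eps_t(y)|\lesssim|x-y|^{\alpha}$ via an Osgood/Gronwall estimate (the bounded drift and Lipschitz noise handled by It\^o and BDG, the log-Lipschitz drift integrated through the Osgood lemma), and (ii) time increments $E|\Phi^\eps_t(x)-\Phi^\eps_s(x)|^p\lesssim|t-s|^{p/2}$ from the boundedness of the drift and the martingale part. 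Kolmogorov's continuity theorem upgrades these to a.s.\ $C^\alpha$-in-space, $C^\beta$-in-time ($\beta<1/2$) bounds, uniform in $\eps$, hence compactness; passing to the limit along a subsequence, using $K^\eps\to K$ away from the diagonal together with the log-Lipschitz control near it, produces a measure-preserving solution $\Phi$ of (\ref{stocEulerflow}) with the stated regularity.

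The main obstacle is uniqueness, which cannot rely on Gronwall since the velocity is only log-Lipschitz. I would compare two measure-preserving solutions $\Phi,\tl\Phi$ through $Q(t)=E\int_{\mt^2}|\Phi_t(x)-\tl\Phi_t(x)|\,dx$. Applying It\^o to $|\Phi_t(x)-\tl\Phi_t(x)|$ (or a smooth regularization thereof), the martingale term vanishes in expectation, the It\^o second-order term is controlled by the Lipschitz bound on the $\sigma_k$ and contributes linearly, and the drift difference splits into a kernel part and a transport part; using measure-preservation to rewrite $u^\Phi-u^{\tl\Phi}$ and the log-Lipschitz modulus, both are bounded by $C\,Q(t)(1-\log Q(t))$. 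The essential point is that $r\mapsto r(1-\log r)$ is concave, so Jensen's inequality lets the modulus pass through both the spatial average and the expectation in the favorable direction, yielding $Q'(t)\le C\,Q(t)(1-\log Q(t))$; since $Q(0)=0$ and $\int_{0^+}dr/[r(1-\log r)]=+\infty$, the Osgood lemma forces $Q\equiv0$, i.e.\ $\Phi=\tl\Phi$. Strong existence and uniqueness on the prescribed filtration then follow in the usual way from pathwise uniqueness together with the adapted solution constructed above.
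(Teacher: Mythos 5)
Your proof takes a genuinely different route to existence than the paper does, while your uniqueness argument is essentially the paper's key estimate recast in Osgood form. The paper does not regularize the kernel to construct $\Phi$: it first solves the \emph{linear} SDE (\ref{stoclinearized}) for an arbitrary fixed log-Lipschitz random drift (mollifying $u$ and $\sigma_k$ there, obtaining a Cauchy sequence of Kunita flows in $C([0,T]\times\mt^2;L^p(\Omega))$, and applying Kolmogorov's criterion), and then handles the nonlocal dependence by Picard iteration of the map $G:\psi\mapsto$ (solution of the linear SDE with $u=u^\psi$) in the metric space $SM$ with distance $\sup_t\int_{\mt^2}E|\psi^1_t-\psi^2_t|dx$; convergence of the iterates uses the splitting into $L_\eps r+\eps$ (inequality (\ref{gammaprop})) and the combinatorial Lemma \ref{lem-ineq}, with uniqueness and global existence obtained by the same iteration and a time-stepping argument. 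Your scheme --- solve with $K^\eps=K*\rho_\eps$, prove uniform-in-$\eps$ bounds from measure preservation and the uniform log-Lipschitz modulus, and pass to the limit --- is exactly the mechanism of the paper's Stability section (Proposition 6.1), so all the estimates you need are available; what it buys is that each approximate problem is a genuine contraction, at the cost of still having to run a fixed-point argument in the space of measure-preserving adapted flows for each $\eps$ (Kunita's theory alone does not apply, since the drift depends on the whole flow), and of a more delicate limit passage.

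The one step that does not work as written is ``uniform H\"older bounds, hence compactness; passing to the limit along a subsequence.'' Pathwise (Arzel\`a--Ascoli) compactness gives an $\omega$-dependent subsequence, which destroys measurability and adaptedness of the limit and prevents you from passing to the limit in the stochastic integral; tightness plus Skorokhod would only yield a martingale solution, whereas the theorem asserts strong existence on the prescribed filtration. You must replace compactness by an actual convergence statement: show that $(\Phi^\eps)_\eps$ is Cauchy in $C([0,T];L^1(\mt^2\times\Omega))$ by comparing $\Phi^\eps$ and $\Phi^\delta$ directly, namely
\begin{equation*}
\int_{\mt^2}E|\Phi^\eps_t(x)-\Phi^\delta_t(x)|dx\le C\|K^\eps-K^\delta\|_{L^1(\mt^2)}+C\int_0^t\int_{\mt^2}\gamma\big(E|\Phi^\eps_r(x)-\Phi^\delta_r(x)|\big)dx\,dr,
\end{equation*}
where the first term comes from measure preservation of $\Phi^\eps_r$ (so that $\int|K^\eps-K^\delta|(\Phi^\eps_r(x)-\Phi^\eps_r(y))\,dy$ reduces to $\|K^\eps-K^\delta\|_{L^1}$) and the second from the log-Lipschitz bound (\ref{keyest}) applied to $K^\delta$, which holds uniformly in $\delta$; a comparison with the function $v$ of (\ref{v}) then gives the Cauchy property, and the limit is automatically adapted and solves (\ref{stocEulerflow}). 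With that repair, and with the minor correction that Lemma \ref{abscont} gives $\|\xi^\eps_t\|_{L^\infty}\le\|\xi_0\|_{L^\infty}$ (which is all you need), your argument is sound; the Osgood uniqueness via $Q(t)=E\int_{\mt^2}|\Phi_t-\tl{\Phi}_t|dx$ is correct and is in fact a cleaner presentation of the estimate underlying the paper's Lemma \ref{stockeylemma}.
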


\begin{remark}
Actually the uniqueness holds in a larger class of flows, namely the class $SM$ defined at the beginning of Section \ref{SEf}, as it can be seen from the proof of Theorem \ref{mainflow}.
\end{remark}


\subsection{The strategy}

There are two ways to prove our results. We will develop mainly the one which requires the weakest regularity assumptions on the $\sigma_k$'s. This strategy will be as follows.

First we will prove that, for a log-Lipschitz random vector field $u$, the SDE
\begin{equation*}
dX_t=u(X_t)\,dt+\sum_k\sigma_k(X_t)dW^k_t
\end{equation*}
admits a unique solution, given by a stochastic measure-preserving continuous flow (Lemma \ref{stocauxlemma}). This includes the case of a ``linear'' version of \eqref{stocEulerflow}, where the drift is replaced by $u^\psi$ for some fixed stochastic flow $\psi$, see also next paragraph for notation. Then, using an iteration scheme, we will build a unique solution to \eqref{stocEulerflow}, reaching the assertion of Theorem \ref{mainflow}.

In the subsequent section, we will use Theorem \ref{mainflow} to prove Theorem \ref{mainvort}.

In the last section, we will show the second method: a ``trick'' allows us to reduce the stochastic case to a modified deterministic case. This seems to be more rapid but requires the $\sigma_k$'s to be at least $C^2$ (at least if one wants to use classical results), while the first method requires only a Lipschitz-type (precisely $W^{1,\infty}(\ell^2)$) hypothesis on the diffusion coefficients. That is why we will not develop this second method in all the details.

\subsection{Log-Lipschitz property of $K$ and other useful facts}


First we state the fundamental log-Lipschitz property for $K$ and the drift $u^\xi$. The key inequality \eqref{keyest} is stated in \cite[section 1.2]{MarPul}, and it follows from standard estimates of the Green function $G$, see e.g.\ \cite[section 4.2]{Aub}. For the completeness sake, we have recalled the proof in the appendix. For $r\ge0$, call
\begin{equation*}
\gamma(r)=r(1-\log r)1_{]0,1/e[}(r)+(r+(1/e))1_{[1/e,+\infty[}(r).
\end{equation*}

\begin{remark}
The following elementary properties of $\gamma$ will be of use: the function $\gamma$ is increasing, concave and for every $0<\eps<1/e$, we have
\begin{equation}
\gamma(r)\le -r\log\eps+\eps, \ \ \forall r\ge0.\label{gammaprop}
\end{equation}
\end{remark}

\begin{lemma}
The map $K$, introduced before, is an $L^p(\mt^2)$ divergence-free (in the distributional sense) vector field, for every $p<2$, and verifies for certain constants $L_{0,K}$, $L_K$:
\begin{eqnarray}
\|K\|_{L^1(\mt^2)}\le L_{0,K},&\nonumber\\
\int_{\mt^2}|K(x-y)-K(x'-y)|dy\le L_K\gamma(|x-x'|),& \ \ \forall x,x'\in\mt^2.\label{keyest}
\end{eqnarray}
\end{lemma}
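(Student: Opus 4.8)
The plan is to reduce everything to pointwise estimates on the kernel $K=\nabla^\perp G$ and its gradient, which in turn follow from the standard near-diagonal behaviour of the Green function on the torus. First I would recall that on $\mt^2$ the function $G$ differs from the Euclidean fundamental solution $\tfrac{1}{2\pi}\log|x|$ by a smooth periodic function, so that near the singularity one has the bounds $|K(z)|\le C|z|^{-1}$ and $|\nabla K(z)|\le C|z|^{-2}$ for $0<|z|\le 1$, while $K$ is smooth (hence Lipschitz) on the region where $|z|$ is bounded away from $0$.

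The $L^p$ and divergence-free claims are then immediate. Since $|K(z)|\lesssim |z|^{-1}$, passing to polar coordinates gives $\int_{|z|\le 1}|K(z)|^p\,dz\lesssim\int_0^1 r^{-p}\,r\,dr<+\infty$ precisely when $p<2$; together with boundedness away from the origin this shows $K\in L^p(\mt^2)$ for every $p<2$. The divergence-free property is formal: for $\varphi\in C^\infty(\mt^2)$ one has $\lan K,\nabla\varphi\ran=0$ by the symmetry of the mixed second derivatives of $G$, i.e.\ $\diverg K=-\partial_1\partial_2 G+\partial_2\partial_1 G=0$ in the distributional sense.

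The heart of the matter is the key estimate (\ref{keyest}). Fix $x,x'$ and set $\delta=|x-x'|$; I would split the integral according to whether $y$ is close to or far from the pair $x,x'$. In the near field $\{y:|x-y|\le 2\delta\}$ I would discard the cancellation and bound $|K(x-y)-K(x'-y)|\le |K(x-y)|+|K(x'-y)|$, then integrate the singularity $|z|^{-1}$ over balls of radius $\lesssim\delta$; polar coordinates give a contribution of order $\delta$. In the far field $\{y:|x-y|>2\delta\}$ the whole segment joining $x-y$ and $x'-y$ stays at distance $\gtrsim|x-y|$ from the origin, so the mean value theorem and the gradient bound yield $|K(x-y)-K(x'-y)|\le\delta\sup|\nabla K|\lesssim\delta|x-y|^{-2}$; integrating $\delta|z|^{-2}$ over $2\delta\le|z|\le 1$ produces the logarithmic factor $\delta\log(1/\delta)$, while the bounded region $|z|\ge 1$ (where $K$ is Lipschitz) contributes another $O(\delta)$.

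Summing the three pieces yields a bound of order $\delta\big(1+\log(1/\delta)\big)$ for small $\delta$ and $O(\delta)$ once $\delta$ is bounded below, which is exactly $C\gamma(\delta)$. The only real care required is the far-field step: one must check that the connecting segment avoids the singularity so that the gradient bound applies uniformly along it, and then extract the logarithm cleanly from $\int_{2\delta}^1 r^{-1}\,dr$. I expect this far-field bookkeeping---rather than the $L^p$ claim, which is routine---to be the main obstacle.
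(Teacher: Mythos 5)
Your proof is correct, and it is worth noting that it complements the paper's write-up rather than duplicating it: the two arguments supply detail for opposite halves of the same overall scheme. The paper defers exactly the step you carry out in full --- the near-field/far-field splitting with the mean value theorem on the far field, which it attributes to an ``elementary argument'' in Marchioro--Pulvirenti --- and instead spends its effort proving the pointwise bounds $|G(x)|\le C(-\log|x|+1)$, $|D^nG(x)|\le C_n(|x|^{-n}+1)$ from scratch, via the representation $G=-\int_0^\infty v(t,\cdot)\,dt$ with $v$ the heat semigroup applied to $\delta_0-1$, split into a smooth long-time part (controlled by the Fourier expansion) and a short-time part (controlled by the Gaussian image sums). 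You instead take those kernel bounds as the standard fact that $G-\tfrac{1}{2\pi}\log|\cdot|$ is smooth near the origin, which is legitimate but is precisely the input the appendix exists to justify on the torus, where the Fourier series gives no direct access to the local singularity. Your decomposition itself is sound: the near-field ball of radius $2\delta$ contributes $O(\delta)$ by integrating $|z|^{-1}$, the annulus $2\delta\le|z|\le 1$ contributes $\delta\log(1/\delta)$ from $\int_{2\delta}^1 r^{-1}\,dr$, the remaining compact region is Lipschitz, and the caveat you raise about the connecting segment staying at distance $\gtrsim|x-y|$ from the singularity is handled by $|x-y|>2\delta$ (one should just lift to $\mathbb{R}^2$ and note the segment then avoids \emph{all} lattice translates of the singularity). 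The $L^p$ and divergence-free claims are routine in both treatments. In short: each approach ``buys'' rigor where the other cites a reference, and a fully self-contained proof would concatenate the paper's appendix with your splitting argument.
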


The divergence-free property is a consequence of the fact that $K$ is orthogonal to a gradient of a scalar field.

\begin{corollary}\label{logLipu}
For every $w$ in $L^\infty(\mt^2)$, $u^w=K*w$ is divergence-free and satisfies
\begin{eqnarray}
\|u^w\|_{L^\infty}\le L_{0,K}\|w\|_{L^\infty},&\nonumber\\
|u^w(x)-u^w(x')|\le L_K\|w\|_{L^\infty}\gamma(|x-x'|),& \ \ \forall x,x'\in\mt^2.\label{logLip1}
\end{eqnarray}
\end{corollary}

We will use also the following elementary result. We recall that, for a finite signed measure $\mu$ on a space $E$ and a measurable map $F:E\rightarrow E'$, $\nu=F_\#\mu$ denotes the image measure of $\mu$ on $E'$, namely $\nu(A)=\mu(F^{-1}(A))$ for every measurable set $A$ in $E'$. Notice that $\nu$ is a finite signed measure and that $|\nu|\le F_\#|\mu|$ (since $|\nu|(A)\le F_\#|\mu|(A)$ for every $A$).

\begin{lemma}\label{abscont}
Let $F$ be a measure preserving map on $\mt^2$ and let $w$ be in $L^\infty(\mt^2)$. Let $\mu$ the (signed) measure on $\mt^2$ with density $w$ (with respect to the Lebesgue measure) and define $\nu=F_\#w$. Then $\nu$ has a density (denoted by $v$) with respect to Lebesgue measure and $\|v\|_{L^\infty}\le \|w\|_{L^\infty}$.
\end{lemma}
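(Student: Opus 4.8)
The plan is to reduce the whole statement to a single estimate on the total variation of $\nu$ and then invoke the Radon--Nikodym theorem. The only structural input is the measure-preservation of $F$, which says precisely that $\mathrm{Leb}(F^{-1}(A))=\mathrm{Leb}(A)$ for every measurable $A\subseteq\mt^2$, together with the definition of the pushforward.

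First I would show that $\nu$ is absolutely continuous with respect to Lebesgue measure, with a quantitative bound on its total variation. Using the observation recalled just before the statement, namely $|\nu|\le F_\#|\mu|$, and the fact that $|\mu|$ has density $|w|$, I compute for an arbitrary measurable set $A$
\begin{align*}
|\nu|(A) &\le F_\#|\mu|(A)=|\mu|(F^{-1}(A))=\int_{F^{-1}(A)}|w|\,dx\\
&\le \|w\|_{L^\infty}\,\mathrm{Leb}(F^{-1}(A))=\|w\|_{L^\infty}\,\mathrm{Leb}(A),
\end{align*}
where the final equality is exactly the measure-preservation of $F$. In particular $\mathrm{Leb}(A)=0$ forces $|\nu|(A)=0$, so $\nu\ll\mathrm{Leb}$ and the Radon--Nikodym theorem provides a density $v\in L^1(\mt^2)$ with $\nu=v\,\mathrm{Leb}$.

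It then remains to upgrade this $L^1$ density to the desired $L^\infty$ bound. Since $|\nu|$ has density $|v|$, the chain above reads $\int_A|v|\,dx\le\|w\|_{L^\infty}\,\mathrm{Leb}(A)$ for every measurable $A$. Testing on the super-level sets $A_\eps=\{|v|>\|w\|_{L^\infty}+\eps\}$ gives $(\|w\|_{L^\infty}+\eps)\,\mathrm{Leb}(A_\eps)\le\|w\|_{L^\infty}\,\mathrm{Leb}(A_\eps)$, hence $\mathrm{Leb}(A_\eps)=0$ for every $\eps>0$; letting $\eps\downarrow 0$ yields $|v|\le\|w\|_{L^\infty}$ a.e., that is $\|v\|_{L^\infty}\le\|w\|_{L^\infty}$.

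I do not expect any serious obstacle: the statement is a routine consequence of the change-of-variables formula for pushforwards. The only points requiring mild care are invoking the measure-preservation in the correct direction (as the identity $\mathrm{Leb}(F^{-1}(A))=\mathrm{Leb}(A)$, not merely that $F$ is a.e.\ invertible) and converting the uniform measure bound into a pointwise a.e.\ bound via the super-level-set argument. An equivalent and slightly slicker route I could take instead is duality: for $\varphi\in C^\infty(\mt^2)$ one has $\int\varphi\,d\nu=\int(\varphi\circ F)\,w\,dx$, and measure-preservation gives $|\int\varphi\,d\nu|\le\|w\|_{L^\infty}\|\varphi\|_{L^1}$, so $\varphi\mapsto\int\varphi\,d\nu$ extends to a bounded functional on $L^1(\mt^2)$ of norm at most $\|w\|_{L^\infty}$, which by $(L^1)^*=L^\infty$ is represented by the sought density $v$.
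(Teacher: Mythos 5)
Your proof is correct and follows essentially the same route as the paper: absolute continuity via the measure-preservation identity $\mathrm{Leb}(F^{-1}(A))=\mathrm{Leb}(A)$, then the super-level-set argument to upgrade to the $L^\infty$ bound. The only cosmetic difference is that you derive the single uniform estimate $|\nu|(A)\le\|w\|_{L^\infty}\mathrm{Leb}(A)$ up front (using $|\nu|\le F_\#|\mu|$), which handles the signed case directly, whereas the paper first reduces to nonnegative $w$ and then runs the same two steps.
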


\begin{proof}
It is enough to prove the Lemma when $w$ is nonnegative. Since $F$ is measure-preserving, if $A$ is a set of zero Lebesgue measure, then $\mc{L}^2\{F\in A\}=\mc{L}^2(A)=0$, and so $\int_Ad\nu=\int_{\mt^2}1_A(F)wdx=0$. So $\nu$ admits a (nonnegative) density $v$. Now, taking $\eps>0$, $B=\{v>\|w\|_{L^\infty}+\eps\}$, we have
\begin{eqnarray*}
(\|w\|_{L^\infty}+\eps)\mc{L}^2(B)\le \int_{\mt^2}1_Bvdx=\int_{\mt^2}1_B(F)wdx\le\\
\le \|w\|_{L^\infty}\mc{L}^2\{F\in B\}=\|w\|_{L^\infty}\mc{L}^2(B),
\end{eqnarray*}
which implies that $\mc{L}^2(B)=0$. By arbitrariness of $\eps$, we get $\|v\|_{L^\infty}\le \|w\|_{L^\infty}$.
\end{proof}

Given Lemma \ref{abscont}, we will use often $v=F_\#w$ instead of $\nu=F_\#\mu$.

Finally some other notation. Let $\psi$ a measurable measure-preserving flow on $\mt^2$. With the notation in the previous section define $\xi^\psi_t=(\psi_t)_\#\xi_0$ (which is in $L^\infty([0,T]\times\mt^2)$ by Lemma \ref{abscont}) and $u^\psi=u^{\xi^\psi}$, which also reads
\begin{equation*}
u^\psi(t,x)=\int_{\mt^2}K(x-\psi_t(y))\, \xi_0(y)dy.
\end{equation*}
As already noticed, the SDE \eqref{stocEulerflow} reads as
\begin{equation*}
\Phi_t(x)= x +\int^t_0u^\Phi_r(\Phi_r(x))dr +\sum_k\int^t_0\sigma_k(\Phi_r(x))dW^k_r.
\end{equation*}

\begin{remark}
By the definition of $u^\psi$, Corollary \ref{logLipu} and Lemma \ref{abscont}, $u^\psi$ enjoys $\|u^\psi\|_{L^\infty}\le L_{0,K}\|\xi_0\|_{L^\infty}$ and the following log-Lipschitz property:
\begin{equation}
|u^\psi(x)-u^\psi(x')|\le L_K\|\xi_0\|_{L^\infty}\gamma(|x-x'|),\ \ \forall x,x'\in\mt^2.\label{logLip2}
\end{equation}
\end{remark}

Given $\lambda$ a positive constant and $z_0$ in $[0,1/e]$, we will also denote by $z^\lambda(t,z_0)$ (omitting the $\lambda$ when not necessary) the solution to the ODE
\begin{equation*}
z_t=z_0+\int^t_0\lambda\gamma_r(z_r)dr
\end{equation*}
This $z$ is unique and has the explicit formula
\begin{equation}
z(t,z_0)=z_0^{\exp[-\lambda t]}e^{1-\exp[-\lambda t]}1_{t<t_0}+(2e^{-1}\exp[\lambda(t-t_0)]-e^{-1})1_{t\ge t_0},\label{z}
\end{equation}
where $t_0=t_0(\lambda,z_0)=\frac{1}{\lambda}\log\frac{1-\log z_0}{2}$ is the time such that $z(t_0)=1/e$. Notice that, for $z_0$ in $[0,\exp[1-2e^{\lambda T}]]$, it holds $t_0\ge T$ and so, for $t$ in $[0,T]$,
\begin{equation}
z(t,z_0)\le ez_0^{\exp[-\lambda t]}.\label{est_z}
\end{equation}


\section{The deterministic case}

We first treat the deterministic case, in order to show the basic ideas. The scheme of the proof, strongly inspired by \cite{MarPul}, is a suitable rewriting of \cite{MarPul}, convenient for generalization to the stochastic case.

Euler flows in 2D (on the torus $\mt^2$) are described by the following non-local ODE:
\begin{equation}
\Phi_t(x)=x+\int^t_0\int_{\mt^2}K(\Phi_s(x)-\Phi_s(y))\, \xi_0(y)dy.\label{Eulerflow}
\end{equation}


%

Equation \eqref{Eulerflow} reads as $\dot{\Phi}=u^\Phi(\Phi)$ (with initial condition $\Phi_0=id$), notice that the drift is log-Lipschitz. That is why we consider the auxiliary equation (linear problem):
\begin{equation}
X^x_t=x+\int^t_0u(s,X^x_s)ds,\label{linearized}
\end{equation}
where $u$ is a fixed measurable vector field with the following property: for every $t$, $x,y$,
\begin{equation}
|u(t,x)-u(t,y)|\le L_u\gamma(|x-y|)\label{logLip}
\end{equation}
for some $L_u$ independent of $t,x,y$.

\begin{lemma}\label{auxlemma}
For every initial datum $x$, equation \eqref{linearized} has a unique solution. This solution is described by a (unique) flow $\psi$ of measure-preserving homeomorphisms of class $C^\alpha$ in space and Lipschitz in time, with $\alpha={\exp[-L_uT]}$.
\end{lemma}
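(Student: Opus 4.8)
The plan is to handle (\ref{linearized}) by the classical Osgood (log-Lipschitz) theory, extracting uniqueness and a quantitative Hölder modulus in one stroke, and then to upgrade the map $\psi_t:x\mapsto X^x_t$ to a measure-preserving homeomorphism. First, for existence with $x$ fixed: since $u$ satisfies (\ref{logLip}) uniformly in $t$, it is continuous in the space variable and, the torus being compact, bounded; together with measurability in $t$ this places us in the Carathéodory framework, so (\ref{linearized}) has at least one solution on all of $[0,T]$, with no blow-up because $u$ is bounded.

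Next I would obtain uniqueness and spatial regularity together. Given two solutions $X^x,X^y$ (allowing $x=y$), set $\rho(t)=|X^x_t-X^y_t|$; from (\ref{linearized}) and (\ref{logLip}),
\[
\rho(t)\le |x-y|+C\int_0^t\gamma(\rho(s))\,ds.
\]
Since $\gamma(r)=r(1-\log r)$ near $0$ obeys the Osgood condition $\int_{0^+}dr/\gamma(r)=+\infty$, the case $x=y$ forces $\rho\equiv0$, giving uniqueness, so the flow $\psi$ is well defined. For $x\ne y$, I would compare $\rho$ with the solution $g$ of the scalar ODE $\dot g=C\gamma(g)$, $g(0)=|x-y|$ (legitimate as $\gamma$ is continuous and increasing on $(0,1)$). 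The substitution $h=1-\log g$ linearizes this to $\dot h=-Ch$, whence $1-\log g(t)=(1-\log|x-y|)e^{-Ct}$ and, for $|x-y|$ small,
\[
|X^x_t-X^y_t|\le g(t)=\exp\!\big(1-e^{-Ct}\big)\,|x-y|^{e^{-Ct}}.
\]
This is precisely Hölder continuity of $\psi_t$ with exponent $e^{-Ct}\ge e^{-CT}$; inserting the log-Lipschitz constant $C\|\xi_0\|_{L^\infty}$ relevant to our application (Corollary \ref{logLipu}) yields $\alpha=\exp[-C\|\xi_0\|_{L^\infty}T]$. Lipschitz regularity in time is immediate, since $|X^x_t-X^x_s|\le\int_s^t|u(r,X^x_r)|\,dr\le\|u\|_{L^\infty}|t-s|$.

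It then remains to show that $\psi_t$ is a measure-preserving homeomorphism. Injectivity follows from uniqueness run backward in time: if $\psi_t(x)=\psi_t(y)$, solving (\ref{linearized}) backward from this common value returns $x=y$. Running (\ref{linearized}) backward from an arbitrary point in fact produces a two-sided inverse, Hölder by the same estimate, so $\psi_t$ is a homeomorphism of $\mt^2$. For measure-preservation I use that $u$ is divergence-free, as it is in our applications ($u=u^\psi$ by Corollary \ref{logLipu}); since $u$ is only log-Lipschitz, Liouville's theorem does not apply directly. Instead I would mollify $u$ into smooth divergence-free fields $u^n$, whose flows $\psi^n_t$ preserve Lebesgue measure by the classical $C^1$ theory. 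The uniform log-Lipschitz bound makes the Osgood estimate stable, so $\psi^n_t\to\psi_t$ uniformly, and passing to the limit in $\int_{\mt^2}f(\psi^n_t(x))\,dx=\int_{\mt^2}f\,dx$ for continuous $f$ shows that $\psi_t$ preserves measure.

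The technical heart is the quantitative log-Lipschitz Grönwall estimate, which simultaneously delivers uniqueness and the sharp Hölder exponent. The main obstacle I anticipate is transferring measure-preservation from the smooth approximations to the limiting flow: this requires both the divergence-free structure of $u$ and the stability of the flow under mollification, the latter itself resting on the uniformity of the Osgood estimate in the approximating sequence.
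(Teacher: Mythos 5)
Your proof is correct and follows essentially the same route as the paper: existence by Peano/Carath\'eodory, uniqueness and the H\"older exponent $e^{-Ct}$ via the Osgood comparison with $\dot g=C\gamma(g)$, Lipschitz in time from boundedness of $u$, invertibility via the backward flow, and measure-preservation (which, as you rightly note, uses that $u$ is divergence-free) by mollifying $u$ and passing to the limit, exactly as the paper does in the stochastic analogue. Your explicit integration $g(t)=e^{1-e^{-Ct}}\,|x-y|^{e^{-Ct}}$ agrees with (and in fact corrects a sign typo in the exponent of) the paper's formula (\ref{z}).
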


\begin{proof}
The existence of a global solution (in $\mr^2$) to \eqref{linearized} follows from the Peano Theorem, since $u$ is continuous bounded. The uniqueness holds by the Osgood criterion (since $\int^\eps_0\gamma(r)^{-1}dr=+\infty)$) or even by the H\"older estimate below (simply take $x=y$).

The Lipschitz continuity in time follows by boundedness of $u$. As for the H\"older continuity, property \eqref{logLip} implies that, for every $x$ and $x'$,
\begin{equation*}
|\psi_t(x)-\psi_t(x')|\le |x-x'|+L_u\int^t_0\gamma(|\psi_s(x)-\psi_s(x')|)ds.
\end{equation*}
By a comparison result, $|\psi_t(x)-\psi_t(x')|\le z^{L_u}(t,|x-x'|)$ (recall that $z^\lambda$ is the unique solution to $z_t=z_0+\int^t_0\lambda\gamma(z_s)ds$). The bound \eqref{est_z} for $z$ gives the desired regularity. The invertibility and the continuity of the inverse map are due to the classical cocycle law, so that the inverse flow of $\psi_t$ is $\psi_{-t}$. The measure-preserving property follows by a simple approximation argument, see the proof of Lemma \ref{stocauxlemma} in the stochastic case.
\end{proof}


Now we use the Picard iteration scheme to prove the existence and the uniqueness of solutions to \eqref{Eulerflow}. Consider the set
\begin{eqnarray*}
M_T&=&\Big\{\psi:[0,T]\times\mt^2\rightarrow\mt^2|\psi\mbox{ measurable }, \sup_{[0,T]}\int_{\mt^2}|\psi_t(x)|\,dx<+\infty,\\
&& \psi_t\mbox{ measure-preserving for a.e. }t\Big\}.
\end{eqnarray*}
It is a complete metric space, endowed with the distance $dist(\psi^1,\psi^2)=\sup_{[0,T]}\int_{\mt^2}|\psi^1_t(x)-\psi^2_t(x)|\,dx$. For any $\psi$ in $M_T$, define $G(\psi)$ as the unique flow solution to \eqref{linearized} with $u=u^\psi$, i.e.
\begin{equation*}
\frac{d}{dt}{G(\psi)}=u^\psi(G(\psi))
\end{equation*}
(with initial condition $G(\psi)(0,x)=x$). Recall that
\begin{equation*}
u^\psi(t,x)=\int_{\mt^2}K(x-\psi_t(y))\, \xi_0(y)dy.
\end{equation*}
enjoys the log-Lipschitz property \eqref{logLip2}, so that by the previous Lemma $G$ takes values in $M_T$.


\begin{lemma}\label{keylemma}
For every $\eps>0$, for every two flows $\psi^1$, $\psi^2$ in $M_T$, we have:
\begin{eqnarray}
\lefteqn{\int_{\mt^2}|G(\psi^1)_t(x)-G(\psi^2)_t(x)|\,dx}\nonumber\\
&\le& L_K\|\xi_0\|_{L^\infty}\int^t_0\gamma\left(\int_{\mt^2}|\psi^1_s(x)-\psi^2_s(x)|\,dx\right)ds
\nonumber \\
 &+& L_K\|\xi_0\|_{L^\infty}\int^t_0\gamma\left(\int_{\mt^2}|G(\psi^1)_s(x)-G(\psi^2)_s(x)|\,dx\right)ds\label{est1}
\end{eqnarray}
and also
\begin{eqnarray}
\lefteqn{\int_{\mt^2}|G(\psi^1)_t-G(\psi^2)_t|\,dx}\nonumber\\
&\le& L_K\|\xi_0\|_{L^\infty}(-\log\eps)\int^t_0\int_{\mt^2}|G(\psi^1)_s-G(\psi^2)_s|\,dxds
\nonumber\\
&+& L_K\|\xi_0\|_{L^\infty}(-\log\eps)\int^t_0\int_{\mt^2}|\psi^1_s-\psi^2_s|\,dxds + 2L_K\|\xi_0\|_{L^\infty}t\eps.\label{est2}
\end{eqnarray}
\end{lemma}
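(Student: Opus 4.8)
The plan is to extract all the real content into (\ref{est1}) and then obtain (\ref{est2}) for free. Indeed, once (\ref{est1}) is established, (\ref{est2}) follows immediately by bounding each of the two $\gamma$-terms on the right-hand side of (\ref{est1}) via the elementary inequality (\ref{gammaprop}), $\gamma(r)\le L_\eps r+\eps$, and collecting the two resulting constant contributions into the single term $2C\|\xi_0\|_{L^\infty}t\eps$. So I concentrate on (\ref{est1}). Writing $\Phi^i:=G(\psi^i)$, so that $\Phi^i_t(x)=x+\int_0^t u^{\psi^i}_s(\Phi^i_s(x))\,ds$ and both $\Phi^1_t,\Phi^2_t$ are measure-preserving by Lemma \ref{auxlemma}, I subtract and split the integrand by the triangle inequality as
\begin{equation*}
u^{\psi^1}_s(\Phi^1_s(x))-u^{\psi^2}_s(\Phi^2_s(x))=\big[u^{\psi^1}_s(\Phi^1_s(x))-u^{\psi^1}_s(\Phi^2_s(x))\big]+\big[u^{\psi^1}_s(\Phi^2_s(x))-u^{\psi^2}_s(\Phi^2_s(x))\big],
\end{equation*}
calling the two bracketed quantities $A_s(x)$ and $B_s(x)$: the first keeps the velocity field fixed while moving the base point, the second keeps the base point fixed while changing the field.

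For the $A$-term I use the log-Lipschitz property. Since $\xi^{\psi^1}_s=(\psi^1_s)_\#\xi_0$ satisfies $\|\xi^{\psi^1}_s\|_{L^\infty}\le\|\xi_0\|_{L^\infty}$ by Lemma \ref{abscont}, Corollary \ref{logLipu} yields $|A_s(x)|\le C\|\xi_0\|_{L^\infty}\gamma(|\Phi^1_s(x)-\Phi^2_s(x)|)$. Integrating in $x$ over $\mt^2$ and applying Jensen's inequality to the concave function $\gamma$ (after normalizing the Lebesgue measure to a probability measure, the normalizing constant being absorbed into $C$), I obtain $\int_{\mt^2}|A_s(x)|\,dx\le C\|\xi_0\|_{L^\infty}\,\gamma\big(\int_{\mt^2}|\Phi^1_s-\Phi^2_s|\,dx\big)$, which after integration in $s$ produces exactly the second ($G$-)term of (\ref{est1}).

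For the $B$-term the crucial point is that $u^{\psi^1}_s-u^{\psi^2}_s$ is evaluated at $\Phi^2_s(x)$ and that $\Phi^2_s$ is measure-preserving, so the change of variables $z=\Phi^2_s(x)$ gives $\int_{\mt^2}|B_s(x)|\,dx=\int_{\mt^2}|u^{\psi^1}_s(z)-u^{\psi^2}_s(z)|\,dz$. Writing $u^{\psi^i}_s(z)=\int_{\mt^2}K(z-\psi^i_s(y))\xi_0(y)\,dy$, bounding $|\xi_0|\le\|\xi_0\|_{L^\infty}$, using Fubini to integrate in $z$ first, and using the oddness of $K=\nabla^\perp G$ (so that $\int_{\mt^2}|K(z-a)-K(z-b)|\,dz=\int_{\mt^2}|K(a-z)-K(b-z)|\,dz$), I reduce the inner integral to $\int_{\mt^2}|K(a-\zeta)-K(b-\zeta)|\,d\zeta\le C\gamma(|a-b|)$ by the key estimate (\ref{keyest}), with $a=\psi^1_s(y)$ and $b=\psi^2_s(y)$. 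This gives $\int_{\mt^2}|B_s(x)|\,dx\le C\|\xi_0\|_{L^\infty}\int_{\mt^2}\gamma(|\psi^1_s(y)-\psi^2_s(y)|)\,dy$, and a final application of Jensen and integration in $s$ delivers the first ($\psi$-)term of (\ref{est1}).

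The only steps that demand genuine care, and which I expect to be the main obstacles, are the two Jensen steps and the measure-preserving change of variables. Jensen is applied to $\gamma$, which is not globally concave owing to the convex corner at $r=1$; this is harmless because on any bounded interval — and $\mt^2$ has finite diameter $D$ — the function $\gamma$ is comparable, up to a universal multiplicative constant, to a concave majorant (concave on $[0,1]$ and linear beyond, chosen with slope $\ge 1$ at the matching point so as to dominate $\gamma$), to which Jensen applies with only a harmless constant loss absorbed into $C$. The change of variables is exactly where the Lagrangian structure pays off: it is the measure preservation of $\Phi^2_s$ that turns the otherwise awkward spatial dependence of $u^{\psi^1}_s-u^{\psi^2}_s\circ\Phi^2_s$ into a clean integral to which (\ref{keyest}) can be applied directly.
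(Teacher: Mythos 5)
Your proof is correct and follows essentially the same route as the paper's: the paper adds and subtracts $K(G(\psi^1)_s(x)-\psi^2_s(y))$ inside the double integral, which is just the mirror image of your intermediate term $u^{\psi^1}_s(\Phi^2_s(x))$, and both arguments then combine the key estimate (\ref{keyest}), measure preservation of the flows, Jensen for $\gamma$, and (\ref{gammaprop}) for (\ref{est2}). Your remark that $\gamma$ has a convex kink at $r=1$ and must be replaced by a comparable concave majorant before invoking Jensen is a legitimate point that the paper passes over silently, but it does not change the substance of the argument.
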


\begin{proof}
We have
\begin{eqnarray*}
&&\hspace{-1.9truecm}\lefteqn{\int_{\mt^2}|G(\psi^1)_t(x)-G(\psi^2)_t(x)|\,dx  \leq\|\xi_0\|_{L^\infty}\int^t_0\int_{\mt^2}\int_{\mt^2}} \\
&&\hspace{-0.9truecm}|K(G(\psi^1)_s(x)-\psi^1_s(y))-K(G(\psi^2)_s(x)-\psi^2_s(y))|\,dxdyds.
\end{eqnarray*}
In order to use \eqref{keyest}, we add and subtract $K(G(\psi^1)_s(x)-\psi^2_s(y))$ to the integrand of the right-hand side. Thus we get
\begin{eqnarray*}
&&\hspace{-0.9truecm}\lefteqn{\int_{\mt^2}|G(\psi^1)_t(x)-G(\psi^2)_t(x)|\,dx \le \|\xi_0\|_{L^\infty}\int^t_0\int_{\mt^2}\int_{\mt^2}}  \\
&&\Big[ |K(G(\psi^1)_s(x)-\psi^1_s(y))-K(G(\psi^1)_s(x)-\psi^2_s(y))|\nonumber\\
& & +|K(G(\psi^1)_s(x)-\psi^2_s(y))-K(G(\psi^2)_s(x)-\psi^2_s(y))|\Big] dxdyds\nonumber\\
&\le& \|\xi_0\|_{L^\infty}\int^t_0\int_{\mt^2}\int_{\mt^2} \Big[ |K(x-\psi^1_s(y))-K(x-\psi^2_s(y))|\nonumber\\
& & K(G(\psi^1)_s(x)-y)-K(G(\psi^2)_s(x)-y)|\Big] \,dxdyds\nonumber\\
&\le& L_K\|\xi_0\|_{L^\infty}\int^t_0\int_{\mt^2}\gamma(|\psi^1_s(y)-\psi^2_s(y)|)dyds\\
 &+& L_K\|\xi_0\|_{L^\infty}\int^t_0\int_{\mt^2}\gamma(|G(\psi^1)_s(x)-G(\psi^2)_s(x)|)dxds,\nonumber
\end{eqnarray*}
where in the second passage we used the measure-preserving property. Finally, by the Jensen inequality applied to the concave function $\gamma$, we have
\begin{eqnarray*}
\lefteqn{\int_{\mt^2}|G(\psi^1)_t(x)-G(\psi^2)_t(x)|\,dx}\\
&\le& L_K\|\xi_0\|_{L^\infty}\int^t_0\gamma\left(\int_{\mt^2}|\psi^1_s(x)-\psi^2_s(x)|\,dx\right)ds\\ &+& L_K\|\xi_0\|_{L^\infty}\int^t_0\gamma\left(\int_{\mt^2}|G(\psi^1)_s(x)-G(\psi^2)_s(x)|\,dx\right)ds,\nonumber
\end{eqnarray*}
that is the first estimate \eqref{est1}. Now we apply property \eqref{gammaprop}:
\begin{eqnarray*}
\lefteqn{\int_{\mt^2}|G(\psi^1)_t(x)-G(\psi^2)_t(x)|\,dx}\\
&\le& L_K\|\xi_0\|_{L^\infty}(-\log\eps)\int^t_0\int_{\mt^2}|\psi^1_s(x)-\psi^2_s(x)|\,dxds + \nonumber\\
& & + L_K\|\xi_0\|_{L^\infty}(-\log\eps)\int^t_0\int_{\mt^2}\int_{\mt^2}|G(\psi^1)_s(x)-G(\psi^2)_s(x)|\,dxds +\nonumber\\
& & + 2L_K\|\xi_0\|_{L^\infty}t\eps,\nonumber
\end{eqnarray*}
i.e.\ the second estimate \eqref{est2}.
\end{proof}

The following continuity result is a consequence of the previous Lemma.

\begin{corollary}\label{cor-continuity}
The map $G:M_T \to M_T$ is continuous. In fact, it is locally H\"older continuous.
\end{corollary}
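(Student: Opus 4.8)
The plan is to use the \emph{first} estimate (\ref{est1}) of Lemma \ref{keylemma}, rather than (\ref{est2}), and to feed it into an Osgood-type comparison argument; this will give not merely continuity but a quantitative H\"older modulus, with an exponent of the same flavour as the spatial H\"older exponent of Lemma \ref{auxlemma}. Concretely, I would fix $\psi^1,\psi^2\in M$ and write $D:=\mathrm{dist}(\psi^1,\psi^2)$, $\kappa:=C\|\xi_0\|_{L^\infty}$, and
\begin{equation*}
g(t):=\int_{\mt^2}|G(\psi^1)_t(x)-G(\psi^2)_t(x)|\,dx.
\end{equation*}
Since $\gamma$ is nondecreasing on $[0,+\infty)$ and $\int_{\mt^2}|\psi^1_s-\psi^2_s|\,dx\le D$ for every $s$, the first term on the right of (\ref{est1}) is at most $\kappa T\gamma(D)$, so (\ref{est1}) collapses to the closed scalar inequality
\begin{equation*}
g(t)\le \kappa T\gamma(D)+\kappa\int^t_0\gamma(g(s))\,ds,\qquad t\in[0,T].
\end{equation*}

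Next I would invoke the comparison principle for the scalar ODE $\dot h=\kappa\gamma(h)$, which is the very equation already solved explicitly in (\ref{z}) (with the constant $C$ there replaced by $\kappa$). Because $\gamma$ is continuous and satisfies the Osgood condition $\int^\eps_0\gamma(r)^{-1}\,dr=+\infty$ recalled in the proof of Lemma \ref{auxlemma}, the function $z(\cdot,z_0)$ is the maximal solution issuing from $z_0$, and the displayed integral inequality forces $g(t)\le z(t,\kappa T\gamma(D))$ for all $t$. Taking the supremum over $t\in[0,T]$ yields
\begin{equation*}
\mathrm{dist}(G(\psi^1),G(\psi^2))\le z\bigl(T,\kappa T\gamma(D)\bigr).
\end{equation*}

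Finally I would read off the modulus of continuity. For $D$ small the initial value $a:=\kappa T\gamma(D)$ is below $1$ and the comparison solution remains below $1$ up to time $T$, so we sit in the first regime of (\ref{z}), where $z(T,a)=a^{\exp[-\kappa T]}e^{1-\exp[\kappa T]}$. Combining this with the elementary bound $\gamma(D)\le C_\theta D^\theta$, valid for every $\theta\in(0,1)$ and $D\in(0,1]$, gives
\begin{equation*}
\mathrm{dist}(G(\psi^1),G(\psi^2))\le C\,D^{\theta\exp[-\kappa T]},
\end{equation*}
that is, local H\"older continuity with any exponent strictly below $\exp[-C\|\xi_0\|_{L^\infty}T]$. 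Since continuity is a local property, this proves the Corollary.

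I expect the delicate points to be twofold. First, the comparison step must be justified rigorously: one needs that $g$ is a genuine continuous function of $t$ (which follows from the time continuity of the flows $G(\psi^i)$ furnished by Lemma \ref{auxlemma} together with dominated convergence), and then the Bihari--Osgood comparison must be applied to the non-Lipschitz nonlinearity $\gamma$. Second is the bookkeeping that keeps us in the sub-unit regime of (\ref{z}); this is exactly why the statement is only local, since once $a=\kappa T\gamma(D)$ is small the clean formula applies, whereas for flows that are far apart one retains only the crude finite bound $z(T,a)$. I would deliberately avoid the tempting shortcut of applying Gronwall to the second estimate (\ref{est2}) and optimizing over $\eps$ (for instance $\eps=D$): that route produces the exponent $1-C\|\xi_0\|_{L^\infty}T$, which is positive only for small $T$, whereas the comparison argument above delivers a positive exponent for every $T$.
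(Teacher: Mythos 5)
Your proposal is correct and follows essentially the same route as the paper: starting from the first estimate (\ref{est1}), using monotonicity of $\gamma$ to bound the $\psi$-term by $C\|\xi_0\|_{L^\infty}T\gamma(\mathrm{dist}(\psi^1,\psi^2))$, comparing with the explicit solution $z$ of (\ref{z}), and extracting the H\"older modulus from the fact that $\gamma$ itself is H\"older (your bound $\gamma(D)\le C_\theta D^\theta$). The only difference is that you make the resulting exponent $\theta\exp[-C\|\xi_0\|_{L^\infty}T]$ and the sub-unit regime of $z$ explicit, which the paper leaves implicit.
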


\begin{proof}
Let us denote $w_t=\int_{\mt^2}|G(\psi^1)_t(x)-G(\psi^2)_t(x)|\,dx$. Then the estimate \eqref{est1} in Lemma \ref{keylemma}, together with monotonicity of $\gamma$, gives
\begin{eqnarray*}
w_t &\le& L_K\|\xi_0\|_{L^\infty}T\gamma\left(\sup_{s\in[0,T]}\int_{\mt^2}|\psi^1_s(x)-\psi^2_s(x)|\,dx\right)
\\
 &+& L_K\|\xi_0\|_{L^\infty}\int^t_0\gamma(w_s)ds.
\end{eqnarray*}
Again by a comparison theorem (recall the definition of $z$ in \eqref{z}), we get that
\begin{eqnarray*}
&&\hspace{-10truecm}\lefteqn{\int_{\mt^2}|G(\psi^1)_t(x)-G(\psi^2)_t(x)|\,dx}\\
&&\hspace{-8truecm}\lefteqn{\leq  z^{L_K\|\xi_0\|_{L^\infty}}\Big(t,L_K\|\xi_0\|_{L^\infty}T\gamma\big(\sup_{s\in[0,T]}\int_{\mt^2}|\psi^1_s(x)-\psi^2_s(x)|\,dx\big)\Big)}.
\end{eqnarray*}
When $L_K\|\xi_0\|_{L^\infty}T\gamma(dist(\psi^1,\psi^2))\le \exp[1-2e^{L_K\|\xi_0\|_{L^\infty}T}]$ (a condition which is verified for $dist(\psi^1,\psi^2)$ small enough, for fixed $\|\xi_0\|$ and $T$), the estimate \eqref{est_z} gives:
\begin{eqnarray*}
&&\hspace{-10truecm}\lefteqn{\int_{\mt^2}|G(\psi^1)_t(x)-G(\psi^2)_t(x)|\,dx}\\
&&\hspace{-8truecm}\lefteqn{\le e\left(L_K\|\xi_0\|_{L^\infty}T\gamma\left(\sup_{s\in[0,T]}\int_{\mt^2}|\psi^1_s(x)-\psi^2_s(x)|\,dx\right)\right)^{\exp[-L_K\|\xi_0\|_{L^\infty}t]}.}
\end{eqnarray*}
From this and the continuity of $\gamma$, we see that $G$ is continuous on $M_T$. The H\"older continuity of $G$ follows from the fact that $\gamma$ is H\"older continuous.
\end{proof}

We are ready to prove:

\begin{theorem}\label{existuniq} There exists a unique solution in $M_T$ to equation \eqref{Eulerflow}, which is a flow $\Phi$ of measure-preserving homeomorphisms of class $C^\alpha$ in space and Lipschitz in time.
\end{theorem}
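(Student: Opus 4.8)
The plan is to identify solutions of (\ref{Eulerflow}) with fixed points of the map $G$, and then to harvest the machinery already in place: the Picard scheme for existence and the key estimate (\ref{est1}) for uniqueness. I would begin by recording that an element $\Phi\in M$ solves (\ref{Eulerflow}) if and only if $G(\Phi)=\Phi$. Indeed $G(\Phi)$ is by definition the unique flow solving the linear equation (\ref{linearized}) with drift $u=u^\Phi$, so $\Phi=G(\Phi)$ says precisely that $\dot\Phi=u^\Phi(\Phi)$ with $\Phi_0=\mathrm{id}$, i.e.\ (\ref{Eulerflow}). The convergence of the iterates $\psi^{n+1}=G(\psi^n)$ recorded in (\ref{iterest}) already produces, under the smallness condition $\alpha=2C\|\xi_0\|_{L^\infty}T<1$, a limit $\psi\in M_T$; by the continuity of $G$ (Corollary \ref{cor-continuity}) this limit satisfies $G(\psi)=\psi$, and $\Phi:=\psi$ is a solution on $[0,T]$.

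To reach an arbitrary horizon $T$ I would patch over consecutive subintervals. The decisive observation is that the threshold is governed only by $\|\xi_0\|_{L^\infty}$, which is not degraded along the evolution: every element of $M$ is measure-preserving, so Lemma \ref{abscont} gives $\|\xi^\Phi_t\|_{L^\infty}\le\|\xi_0\|_{L^\infty}$, and Corollary \ref{logLipu} then makes $u^\Phi$ log-Lipschitz with the same constant at every time. Fixing $T_1>0$ with $2C\|\xi_0\|_{L^\infty}T_1<1$, I would solve on $[0,T_1]$, restart from the pushed-forward configuration $\xi^\Phi_{T_1}$ (again an admissible $L^\infty$ datum with the same norm bound), and repeat finitely many times to cover $[0,T]$. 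Each piece is, by Lemma \ref{auxlemma}, a flow of measure-preserving homeomorphisms, $C^\alpha$ in space and Lipschitz in time; since the composition of a $C^{\alpha_1}$ and a $C^{\alpha_2}$ homeomorphism is $C^{\alpha_1\alpha_2}$, the exponents multiply and the global flow is $C^\alpha$ in space with $\alpha=\exp[-C\|\xi_0\|_{L^\infty}T]$, exactly as claimed, remaining positive though deteriorating as $T$ grows.

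For uniqueness I would argue globally in time, with no smallness needed. Suppose $\Phi^1,\Phi^2\in M$ both solve (\ref{Eulerflow}), so that $G(\Phi^i)=\Phi^i$. Feeding $\psi^i=\Phi^i$ into the first key estimate (\ref{est1}) and writing $w_t=\int_{\mt^2}|\Phi^1_t(x)-\Phi^2_t(x)|\,dx$, both terms on the right-hand side reduce to $\int_0^t\gamma(w_s)\,ds$, so that
\[
w_t\le 2C\|\xi_0\|_{L^\infty}\int_0^t\gamma(w_s)\,ds .
\]
Because $\int_{0^+}\gamma(r)^{-1}\,dr=+\infty$, the Osgood criterion forces $w_t\equiv 0$, whence $\Phi^1=\Phi^2$.

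The hard part will not be the fixed-point or Osgood steps, which are essentially contained in Corollary \ref{cor-continuity} and estimate (\ref{est1}), but the patching that removes the smallness restriction. I would need to check carefully that restarting from $\xi^\Phi_{T_1}$ is legitimate — that the pushforward is a genuine $L^\infty$ function with conserved norm, that the cocycle composition of the subinterval flows is again a measure-preserving homeomorphism flow, and that the Lipschitz-in-time regularity survives at the junction times. All of this rests on measure-preservation together with Lemma \ref{abscont}, but the bookkeeping must be organized so that the advertised global exponent $\exp[-C\|\xi_0\|_{L^\infty}T]$ emerges correctly from the multiplicativity of the Hölder exponents.
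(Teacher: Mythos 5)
Your proposal is correct, but it departs from the paper's proof in two substantive ways. For the passage from small time to arbitrary $T$, you use the classical restart: push $\xi_0$ forward to $\xi^\Phi_{T_1}$ (same $L^\infty$ norm by Lemma \ref{abscont}, hence the same smallness threshold), solve afresh, and compose the subinterval flows. This works, but it is exactly the route the paper deliberately avoids with the remark that ``the equation does not remain the same starting from a time $t_0>0$'': you must verify the conjugation identity that the drift of the restarted problem, $\int_{\mt^2}K(z-\tilde\Phi_t(y))\xi^\Phi_{T_1}(y)\,dy$, equals $u^\Phi(t,z)$ for the composed flow $\Phi_t=\tilde\Phi_t\circ\Phi_{T_1}$ (this is where measure-preservation enters), and then handle the cocycle bookkeeping at the junctions. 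The paper instead keeps the same equation and datum $\xi_0$ throughout and simply reruns the Picard iteration on $[0,2T]$ seeded with $\psi^1_t=\Phi_t1_{[0,T]}+\Phi_T1_{]T,+\infty[}$, so that $\rho^n_t=0$ for $t\le T$ and the contraction estimate restarts from $T$; this sidesteps the conjugation argument entirely. Second, your uniqueness via Osgood applied to $w_t\le 2C\|\xi_0\|_{L^\infty}\int_0^t\gamma(w_s)\,ds$, obtained from (\ref{est1}) at the two fixed points, is global in time in one stroke and is in fact cleaner than the paper's argument, which deduces $\mathrm{dist}(\Phi^1,\Phi^2)=0$ from the iteration bound (\ref{iterest}) and therefore needs $T$ small (with uniqueness then propagated along the time iteration). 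Finally, your regularity-by-composition of H\"older exponents does yield $\alpha=\exp[-C\|\xi_0\|_{L^\infty}T]$, but it is unnecessary work: once global existence is known, $\Phi=G(\Phi)$ on all of $[0,T]$, i.e.\ $\Phi$ solves the linear problem (\ref{linearized}) with the log-Lipschitz drift $u^\Phi$, and Lemma \ref{auxlemma} gives the spatial H\"older and temporal Lipschitz regularity (and the homeomorphism and measure-preserving properties) directly on the whole interval — which is all the paper says.
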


\begin{proof}
\textbf{First step.} First we prove the existence and the uniqueness on an interval $[0,T_1]$, for $T_1$ small enough. For the existence, we define the approximating sequence for the solution to problem \eqref{Eulerflow}. Choose $\psi^0_t=I$. For any $n$, put  $\psi^{n+1}=G(\psi^n)$ ($G$ being defined on $M_{T_1}$) and denote $\rho^n_t=\sup_{k\ge n}\int_{\mt^2}|\psi^{k+1}_t(x)-\psi^k_t(x)|\,dx$. [The reason for the supremum in $k\ge n$ is to have, in the formula \eqref{uneq-inductive} below, $\rho^n$ on the left hand side and $\rho^{n-1}$ on the right hand side: otherwise it seems difficult to have good estimates.] The estimate \eqref{est2} in Lemma \ref{keylemma} gives immediately that for all $n\in\mathbb{N}$
\begin{equation}\label{uneq-inductive}
\rho^n_t\le 2L_K\|\xi_0\|_{L^\infty}(-\log\eps)\int^t_0\rho^{n-1}_sds + 2L_K\|\xi_0\|_{L^\infty}t\eps.
\end{equation}
By Lemma \ref{lem-ineq} we infer that
\begin{equation}
\sup_{[0,T_1]}\rho^n_t\le \frac{(2eL_K\|\xi_0\|_{L^\infty}T_1)^n}{\sqrt{2\pi n}}\sup_{[0,T_1]}\rho^0_t+2L_K\|\xi_0\|_{L^\infty}T_1\exp[n(2L_K\|\xi_0\|_{L^\infty}T_1-1)]\label{iterest}
\end{equation}
and so, provided $\alpha:=2eL_K\|\xi_0\|_{L^\infty}T_1<1$, there exists a unique $\psi \in M_{T_1}$ such that the sequence $(\psi^n)_n$ converges in $M_{T_1}$ to $\psi$. By Corollary \ref{cor-continuity} it follows that $G(\psi)=\psi$.

The uniqueness follows by applying the previous iterative scheme to two solutions $\Phi^1$, $\Phi^2$. More precisely we take $\Phi^{i,0}=\Phi^i$ and $\Phi^{i,n+1}=G(\Phi^{i,n})$, $i=1,2$, and we define $\bar{\rho}^n_t=\sup_{k\ge n}\int_{\mt^2}|\Phi^{1,k}_t(x)-\Phi^{2,k}_t(x)|\,dx$. Then \eqref{uneq-inductive} and so \eqref{iterest} hold for the sequence $\bar{\rho}^n$ (in place of $\rho^n$). But, since $\Phi^1$, $\Phi^2$ are solutions and hence fixed points of $G$, $\Phi^{i,n}=\Phi^i$ and $\bar{\rho}^n_t=\bar{\rho}^0_t$ for every $n$, $i=1,2$ and so we get
\begin{equation*}
dist(\Phi^1,\Phi^2)=\sup_{[0,T_1]}\bar{\rho}^n_t=\le \alpha^n \sup_{[0,T_1]}\bar{\rho}^0_t+e^{-1}\alpha e^{-n(1-\alpha)} =\alpha^n dist(\Phi^1,\Phi^2)+\alpha e^{-n(1-\alpha)},
\end{equation*}
for any integer $n$. Since $\alpha<1$, taking $n$ large, we get $dist(\Phi^1,\Phi^2)=0$.

\textbf{Second step.} We prove the global existence and uniqueness. They follow essentially by iteration in time, but we prefer to make this argument explicit, since the non-locality of the drift could create some confusion. The main point is to notice that, for fixed $0<T'<T$, a flow $\Phi$ solves the non-local ODE \eqref{Eulerflow} on $[0,T]$ if and only if it solves the non-local ODE on $[0,T']$ and it satisfies, for $t$ in $[T',T]$,
\begin{equation}
\Phi_t(x)=\Phi_{T'}(x)+\int^t_{T'}\int_{\mt^2}K(\Phi_s(x)-\Phi_s(y))\, \xi_0(y)dy.\label{Eulerflowiter}
\end{equation}
Hence we will prove the global result by showing the existence and the uniqueness for equation \eqref{Eulerflowiter} on $[T_1,2T_1]$, and then iterating the idea. As before, we define the approximating sequence $(\psi^n)_n$ of maps on $[T_1,2T_1]\times\mt^2$ by imposing
\begin{equation}
\psi^n_t(x)=\Phi_{T_1}(x)+\int^t_{T_1}\int_{\mt^2}K(\psi^n_s(x)-\psi^{n-1}_s(y))\, \xi_0(y)dy.\label{Eulerflowiterdef}
\end{equation}
Here a small technical clarification is needed for the existence, the continuity and the measure-preserving property of $\psi^n$: they cannot be inferred directly from Lemma \ref{auxlemma}, since the initial datum is no more $x$ (we could repeat the argument starting from $\Phi_{T_1}(x)$: this can be done, but at the price of introducing a flow map $\Phi_{T_1,t}$ which we avoid for simplicity). So we prove $\psi^n$ exists continuous and is measure-preserving, by defining $\psi^n$ on the whole interval $[0,2T_1]$ as $\psi^0_t=\Phi_t1_{[0,T_1]}+\Phi_{T_1}1_{]T_1,2T_1]}$ and $\psi^n=G(\psi^{n-1})$, the map $G$ relative to the interval $[0,2T_1]$. In this way $\psi^n$ coincides with $\Phi$ on $[0,T_1]$ (in particular it satisfies the condition $\psi^n_{T_1}=\Phi_{T_1}$) and it verifies equation \eqref{Eulerflowiterdef} on $[T_1,2T_1]$. The definition of $\psi^n$ (with continuity and measure-preserving property) is now done.\\
Having the existence and the measure-preserving property, we can repeat the estimates in Lemma \ref{keylemma}, starting from $\Phi_{T_1}$, with no difference in the proof; in particular the estimates hold with the same constant and with final time $T$ which is replaced by $T-T_1$. In this way we get the existence on $T_1\le t\le T_1+T_1=2T_1$.

The uniqueness follows again applying the iterative scheme above to two solutions and concluding as in step 1.

\textbf{Step 3.} The regularity and homeomorphism properties hold by Lemma \ref{auxlemma}, since $\Phi=G(\Phi)$ is in the image of $G$ ($G$ now being defined on the whole $[0,T]$).
\end{proof}

\begin{remark} In case $\xi_0$ is more smooth, more regularity of $\Phi$ can be obtained, using the usual iterative scheme: if $\Phi$ has some regularity, then $u^\Phi$ has more regularity, which implies again an improvement of regularity of $\Phi$, and so on.
\end{remark}

\section{The stochastic case}




Now we prove the existence and the uniqueness of a stochastic continuous flow solving equation \eqref{stocEulerflow}. Notice that, differently from the classical (linear) case, the drift depends on the whole flow, so Kunita's theory (\cite{K}, \cite{Kun2}) is not (at least easily) applicable.

We try to mimic the previous reasoning in the deterministic case. The last part, the iterative procedure from the proof of Theorem \ref{existuniq}, works in this simple way. First we get a generalized Lemma \ref{auxlemma} (with It\^o formula to treat the modulus of the difference of two flows), then we repeat the scheme and obtain a measurable flow solution to \eqref{stocEulerflow}.

The main difficulty is in the first part, precisely in the generalization of Lemma \ref{auxlemma} to stochastic continuous flows (remember that we need a continuity property for $\omega$ fixed). In order to get rid of the first difficulty, we will apply Kolmogorov test, in the spirit of Kunita's results (see \cite{K}, \cite{Kun2}). For this we need some estimates on the linear equation.

%
%

\subsection{The linear stochastic equation}

Consider the following SDE (``linear'' problem):
\begin{equation}
dX_t=u_t(X_t)\,dt+\sum_k\sigma_k(X_t)dW^k_t,\label{stoclinearized}
\end{equation}
where $u$ is a random vector field with the following properties: for every $x$, $(t,\omega)\rightarrow u(t,x,\omega)$ is a progressively measurable process and, for every $t$, $x,y$, $\omega$,
\begin{eqnarray}
u(t,x,\omega)\le L_{0,u},\\
|u(t,x,\omega)-u(t,y,\omega)|\le L_u\gamma(|x-y|)\label{bddu}
\end{eqnarray}
for some $L_{0,u}$, $L_u$ independent of $t,x,y,\omega$. These properties imply that, if $X$ is a progressively measurable process with values in $\mt^2$, then $u(t,X_t)$ is progressively measurable too.

\begin{lemma}\label{contlemma}
Let $X$, $Y$ be two solutions of \eqref{stoclinearized} starting from $x$, $x'$ resp.. Then, for any $p\ge2$, there exists $\delta=\delta(T,L_u,L_\sigma,p)$ such that, if $|x-x'|<\delta$, it holds for some constant $C_{p,T}$, depending only on $p$ and $T$
\begin{equation}
E[|X_t-Y_{t'}|^p]\le e|x-x'|^{p\exp[-(2pL_u+L_\sigma)T]}+C_{p,T}(L_{0,u}^p+L_\sigma^p)|t-t'|^{p/2}.\label{contest}
\end{equation}
\end{lemma}


\begin{proof}
It is enough to prove the formula in the two particular cases $t=t'$ and $x=x'$. Fix $t=t'$. By the It\^o formula (applied to $f(x)=|x|^p$), calling $Z=X-Y$, we have
\begin{eqnarray*}
\lefteqn{d[|Z|^p]=p|Z|^{p-2}Z\cdot(u(X)-u(Y))\,dt}\nonumber\\
& +&\Big[\sum_kp|Z|^{p-2}|\sigma_k(X)-\sigma_k(Y)|^2\Big]\,dt
\nonumber\\
&+& \Big[\sum_kp(p-2)|Z|^{p-4}|Z\cdot(\sigma_k(X)-\sigma_k(Y))|^2\Big]\,dt+\nonumber\\
& +&\sum_kp|Z|^{p-2}Z\cdot(\sigma_k(X)-\sigma_k(Y))dW^k.
\end{eqnarray*}
We take the expectation and use the Lipschitz continuity of $\sigma_k$'s and the log-Lipschitz property of $u$:
\begin{eqnarray*}
\lefteqn{p|Z|^{p-1}|u(X)-u(Y)|}\\
&\le& pL_u|Z|^p(1-\log|Z|)1_{|Z|<1/e}+pL_u|Z|^{p-1}(|Z|+1/e)1_{|Z|\ge1/e}\\
&\le& pL_u|Z|^p(1-\log|Z|^p)1_{|Z|<1/e}+2pL_u|Z|^p+1/e1_{|Z|\ge1/e}pL_u\gamma(|Z|^p)\\
&\le& 2pL_u\gamma(|Z|^p).
\end{eqnarray*}
Then
\begin{equation*}
E[|Z|^p_t]\le |x-x'|^p+ 2pL_u\int^t_0E[\gamma(|Z|^p_s)]ds+ L_\sigma\int^t_0E[|Z|^p_s]ds,
\end{equation*}
from which, using Jensen inequality for the concave function $\gamma$ and the fact that $r\le \gamma(r)$, we obtain
\begin{eqnarray*}
\lefteqn{E[|Z|^p_t]\le |x-x'|^p+ 2pL_u\int^t_0\gamma(E[|Z|^p_s])ds+ L_\sigma\int^t_0E[|Z|^p_s]ds}\\
&\le& |x-x'|^p+ (2pL_u+L_\sigma)\int^t_0\gamma(E[|Z|^p_s])ds.
\end{eqnarray*}
By a comparison principle, $E[|Z|^p_t]\le z^{2pL_u+L_\sigma}(t,|x-x'|^p)$ (recall the definition of $z$ in \eqref{z}). When $|x-x'|$ is small enough (precisely, $<\delta$ for some $\delta(T,L_u,L_\sigma,p)$), we can apply the estimate \eqref{est_z} and we get the thesis for $t=t'$.

Now put $x=x'$, $t'<t$. By the boundedness of $u$ and $\sigma_k$'s, using the H\"older and the Burkholder inequalities, we get
\begin{eqnarray*}
E[|X_t-X_{t'}|^p]&\leq & 2^{p-1}E\Big[\big|\int^t_{t'}u(X_r)dr\big|^p
+\big|\sum_k\int^t_{t'}\sigma_k(X_r)dW^k_r\big|^p\Big]
\\
&\le& C_p(L_{0,u}^p+L_\sigma^{p,T})(|t-t'|^p+|t-t'|^{p/2}).
\end{eqnarray*}
The proof is complete.
\end{proof}

%

This will be enough to get the uniqueness and the continuity, but we still need the existence. For this, we will use a generalization of the previous lemma, exhibiting a Cauchy sequence of solutions of approximating equations. Let $\rho$ be a $C^\infty_c(\mr^2)$ function, define $\rho_\eps(x)=\eps^{-2}\rho(\eps^{-1}x)$; consider the standard mollification of $u$: $u^\eps(t,x,\omega)=u(t,\cdot,\omega)*\rho_\eps(x)$, for $x\in\mt^2$ (the convolution must be understood on the whole $\mr^d$, where $u$ is extended by periodicity). Notice that, since by \eqref{bddu} the field $u$ is continuous and bounded in $x$, uniformly with respect to $t$ and $\omega$, we get that $(u^\eps)_\eps$ converges to $u$ uniformly in $(t,x,\omega)$: that is, we can find a continuous function $\theta:[0,1]\rightarrow[0,+\infty[$, with $\theta(0)=0$, such that, for every $\eps>0$, $\delta>0$,
\begin{equation}
\sup_{[0,T]\times\mt^2\times\Omega}|u^\eps-u^\delta|\le \theta(|\eps-\delta|).\label{unifapprox}
\end{equation}
Moreover, Corollary \ref{cor-continuity} holds uniformly in $\eps$:
\begin{equation}
\sup_{\eps>0}|u^\eps(t,x)-u^\eps(t,x')|\le L_u\gamma(|x-x'|).\label{unifapprox2}
\end{equation}
Similarly, we define $\sigma_k^\eps(t,x):=\sigma_k(t,\cdot)*\rho_\eps(x)$; since the $\sigma_k$'s are Lipschitz-continuous (more precisely, by Condition \ref{condsigma}), we get (possibly for another $\theta$, with the same properties as above)
\begin{eqnarray}
\sup_{[0,T]\times\mt^2}\sum_k|\sigma_k^\eps-\sigma_k^\delta|^2\le \theta(|\eps-\delta|),\label{sigmaapprox}\\
\sup_{\eps>0}\sum_k|\sigma_k^\eps(t,x)-\sigma_k^\eps(t,x')|^2\le L_\sigma^2|x-x'|^2.\label{sigmaapprox2}
\end{eqnarray}

\begin{lemma}\label{existlemma}
For any $\eps>0$, let $\psi^\eps$ be the stochastic continuous flow solution to
\begin{equation}
dX_t^\eps=u^\eps_t(X_t^\eps)\,dt+\sum_k\sigma_k^\eps(X_t^\eps)dW^k_t.\label{approxstoclinearized}
\end{equation}
Then, for any $p\ge2$, for every $\eps$, $\delta$ close enough to $0$, for every $x$, $x'$ in $\mt^2$ with $|x-x'|$ small enough, it holds
\begin{equation*}
\sup_{[0,T]}E[|\psi^\eps_t(x)-\psi^\delta_t(x')|^p]\le C\left(|x-x'|^p+ C\theta(\eps-\delta)\right)^{\exp[-Ct]}
\end{equation*}
for some $C>0$ (independent of $\eps,\delta,x,x'$). In particular, $(\psi^\eps)_\eps$ is a Cauchy sequence in $C([0,T]\times\mt^2;L^p(\Omega))$.
\end{lemma}

For the sake of simplicity, we do not specify, in the result above and in the proof below, the constants involved (using the letter $C$ for all of them), since the estimates will not be used in the proof of the main result.


\begin{remark}
For every $\eps>0$, for every initial datum, equation \eqref{approxstoclinearized} has a unique solution, which can be represented by a stochastic continuous flow $\psi^\eps$ of $C^1$ maps. Indeed, by the boundedness of $u$, the $C^1$ norm of $u^\eps$ is uniformly bounded, and Kunita's theory applies. Notice that here we need Kunita's result with a stochastic drift, namely \cite{Kun2}, Theorem 4.6.5.
\end{remark}

\begin{remark}\label{approxmeaspres}
Again for $\eps>0$, since the stochastic integral is of Stratonovich type (which we have written in It\^o form), usual calculus rules give the standard equation for the Jacobian, which depends only on the divergence of the vector fields. Since $u^\eps$ and $\sigma_k^\eps$'s are divergence free, the Jacobian turns out to be constant and so the stochastic flow is measure-preserving.
\end{remark}

\begin{proof}
First we notice that, for $p\ge2$, $E[|\psi_t^\eps(x)|^p]$ is bounded by a constant independent of $\eps$, $t$ and $x$ (simply estimate $|u^\eps(X^\eps)|$ and $|\sigma_k(X^\eps)|$ with the $\sup$-norms of $u$ and $\sigma_k$ and use H\"older and Burkholder inequalities). Similarly, one sees that $\psi^\eps$ is in $C([0,T]\times\mt^2;L^p(\Omega))$ for every $\eps>0$. By It\^o formula (applied to $f(x)=|x|^p$), calling $Z=\psi^\eps_t(y)-\psi^\delta_t(x)$, we have
\begin{eqnarray*}
\lefteqn{d[|Z|^p]=p|Z|^{p-2}Z\cdot(u^\eps(\psi^\eps(x))-u^\delta(\psi^\delta(x')))\,dt}\nonumber\\
& +&\Big[\sum_kp|Z|^{p-2}|\sigma_k^\eps(\psi^\eps(x))-\sigma_k^\delta(\psi^\delta(x'))|^2 \nonumber\\
&+&\sum_kp(p-2)|Z|^{p-4}|Z\cdot(\sigma_k^\eps(\psi^\eps(x))-\sigma_k^\delta(\psi^\delta(x')))|^2\Big]\,dt\nonumber\\
& +&\sum_kp|Z|^{p-2}Z\cdot(\sigma_k^\eps(\psi^\eps(x))-\sigma_k^\delta(\psi^\delta(x')))dW^k.\nonumber
\end{eqnarray*}
The difficult term is $u^\eps(\psi^\eps(x))-u^\delta(\psi^\delta(x'))$. For this, by \eqref{unifapprox} and \eqref{unifapprox2}, we have
\begin{eqnarray*}
\lefteqn{|u^\eps(\psi^\eps(x))-u^\delta(\psi^\delta(x'))|}\\
&\le& |u^\eps(\psi^\eps(x))-u^\delta(\psi^\eps(x))|+|u^\delta(\psi^\eps(x))-u^\delta(\psi^\delta(x'))|\\
&\le& \theta(\eps-\delta)+C\gamma(|Z|).
\end{eqnarray*}
The terms with $\sigma_k^\eps$ are easier: by \eqref{sigmaapprox} and \eqref{sigmaapprox2}, we have
\begin{eqnarray*}
\lefteqn{\sum_k|\sigma^\eps(\psi^\eps(x))-\sigma_k^\delta(\psi^\delta(x'))|^2}\\
&\le& 2\sum_k\left[|\sigma_k^\eps(\psi^\eps(x))-\sigma_k^\delta(\psi^\eps(x))|^2+|\sigma_k^\delta(\psi^\eps(x))-\sigma_k^\delta(\psi^\delta(x'))|^2\right]\\
&\le& \theta(\eps-\delta)+C|Z|^2.
\end{eqnarray*}
So, proceeding as before, using concavity of $\gamma$ and uniform boundedness of $E[|Z|^{p-1}]$ and $E[|Z|^{p-2}]$, we get
\begin{equation*}
E[|Z|^p_t]\le |x-x'|^p+ C\theta(|\eps-\delta|)+ \int^t_0\gamma(E[|Z|^p_s])ds.
\end{equation*}
We conclude that, if $|x-x'|^p+ C\theta(|\eps-\delta|)$ is small enough (precisely, smaller that a constant depending on $T$, $C$ and $p$),
\begin{equation*}
\sup_{[0,T]}E[|Z|^p_t]\le C\Big(|x-x'|^p+ C\theta(|\eps-\delta|)\Big)^{\exp[-Ct]},
\end{equation*}
which implies that, if $x=x'$, the sequence $(\psi^\eps)_\eps$ is Cauchy in the space $C([0,T]\times\mt^2;L^p(\Omega))$.
\end{proof}

\begin{lemma}\label{stocauxlemma}
Equation \eqref{stoclinearized} has a unique solution, for every deterministic initial datum. This solution is described by a (unique) stochastic measure-preserving continuous flow $\psi$ of class $C^\alpha$ in space, for some $\alpha>0$, and $C^\beta$ in time, for every $\beta<1/2$.
\end{lemma}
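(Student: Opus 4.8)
The plan is to realise the flow $\psi$ as the limit of the mollified flows $\psi^\eps$ of the previous lemma and then to check, in turn, that it is continuous in $(t,x)$, that it solves (\ref{stoclinearized}), that it is unique, and that it is measure-preserving. By the previous lemma $(\psi^\eps)_\eps$ is Cauchy in $C([0,T]\times\mt^2;L^p(\Omega))$, so for each $(t,x)$ it converges in $L^p(\Omega)$ to some $\psi_t(x)$, uniformly in $(t,x)$; since each $\psi^\eps$ is progressively measurable, so is the limit $\psi$.

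\textbf{Construction of a continuous version.} To obtain a modification continuous in $(t,x)$ for almost every $\omega$ — a genuine continuous flow — I would pass to the limit two uniform-in-$\eps$ moment bounds. The spatial bound $\sup_{[0,T]}E[|\psi^\eps_t(x)-\psi^\eps_t(x')|^p]\le C|x-x'|^{p\exp[-CT]}$ is the estimate of the previous lemma with $\delta=\eps$, while the temporal bound $E[|\psi^\eps_t(x)-\psi^\eps_{t'}(x)|^p]\le C|t-t'|^{p/2}$ (for $|t-t'|\le T$) follows, uniformly in $\eps$, from the uniform boundedness of $u^\eps$ and of $(\sigma^\eps_k)_k$ via the H\"older and Burkholder inequalities, exactly as in the second part of the proof of Lemma \ref{contlemma}. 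Both survive the $L^p$-limit, giving $E[|\psi_t(x)-\psi_{t'}(x')|^p]\le C(|x-x'|^{p\exp[-CT]}+|t-t'|^{p/2})$. Applying the Kolmogorov continuity criterion jointly in $(t,x)$, with $p$ taken large enough, $\psi$ admits a version that is $C^\alpha$ in space for any $\alpha<\exp[-CT]$ (hence some $\alpha>0$) and $C^\beta$ in time for every $\beta<1/2$, the temporal exponent $(p/2-1)/p$ tending to $1/2$.

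\textbf{The limit solves the equation.} Next I would pass to the limit in $\psi^\eps_t(x)=x+\int_0^t u^\eps_s(\psi^\eps_s(x))\,ds+\sum_k\int_0^t\sigma^\eps_k(\psi^\eps_s(x))\,dW^k_s$. For the drift, the splitting $|u^\eps(\psi^\eps)-u(\psi)|\le\theta(\eps)+C\gamma(|\psi^\eps-\psi|)$, obtained from (\ref{unifapprox}) and (\ref{unifapprox2}), together with the continuity of $\gamma$ and the $L^p$-convergence $\psi^\eps\to\psi$, handles the Lebesgue integral. For the martingale part, Burkholder's inequality reduces matters to $\sum_k|\sigma^\eps_k(\psi^\eps)-\sigma_k(\psi)|^2\le\theta(\eps)+C|\psi^\eps-\psi|^2$, which follows from (\ref{sigmaapprox}) and (\ref{sigmaapprox2}) and tends to $0$, giving $L^p$-convergence of the stochastic integrals. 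Hence $\psi_{\cdot}(x)$ solves (\ref{stoclinearized}) for every deterministic $x$. Pathwise uniqueness is then immediate from Lemma \ref{contlemma}: for two solutions issued from the same $x$, taking $x=x'$ and $t=t'$ forces $E[|X_t-Y_t|^p]=0$, so they are indistinguishable by continuity. For the measure-preserving property, each $\psi^\eps_t$ preserves Lebesgue measure by Remark \ref{approxmeaspres}; for a Lipschitz $f$ one has $E\big|\int_{\mt^2}f(\psi^\eps_t)\,dx-\int_{\mt^2}f(\psi_t)\,dx\big|\le \mathrm{Lip}(f)\int_{\mt^2}E|\psi^\eps_t(x)-\psi_t(x)|\,dx\to0$, so $\int_{\mt^2}f(\psi_t(x))\,dx=\int_{\mt^2}f\,dx$ a.s.\ for each $t$; running $f$ over a countable dense family of Lipschitz functions and using continuity in $t$ gives, for a.e.\ $\omega$, that $\psi_t$ is measure-preserving for every $t$.

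\textbf{Main obstacle.} The crux is exactly this construction of the continuous flow: upgrading the $L^p(\Omega)$-convergence of the approximations to a single version that is continuous in $(t,x)$ for fixed $\omega$, where the uniform-in-$\eps$ Kolmogorov estimates are indispensable. The log-Lipschitz (rather than Lipschitz) character of the drift is what makes these estimates delicate — the spatial H\"older exponent degrades like $\exp[-CT]$ — and they must be kept uniform in the mollification parameter so as to survive the passage to the limit.
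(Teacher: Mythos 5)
Your proposal is correct and follows essentially the same route as the paper: take the $L^p(\Omega)$-limit of the mollified flows $\psi^\eps$, obtain a jointly continuous version via Kolmogorov's criterion from the $p$-th moment estimates in space and time, pass to the limit in the approximate equation using (\ref{unifapprox})--(\ref{sigmaapprox2}), deduce uniqueness from Lemma \ref{contlemma} with $x=x'$, and transfer the measure-preserving property from the $\psi^\eps$'s by a limiting argument with test functions. The only (immaterial) differences are that the paper applies Kolmogorov to the estimate (\ref{contest}) for the limit solution itself rather than to the uniform-in-$\eps$ bounds, and verifies measure preservation via an a.e.\ convergent subsequence and dominated convergence rather than Lipschitz test functions.
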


\begin{proof}
By the previous Lemma, for every $x$, there exists the limit,  in $C([0,T];L^p(\Omega))$,  $X$ of the approximating processes $X^\eps=\psi^\eps(x)$'s. Then we can pass to the limit in equation \eqref{approxstoclinearized}, because the coefficients are continuous bounded. Hence we infer that the process $X$ is progressively measurable and solves \eqref{stoclinearized}. The uniqueness follows from Lemma \ref{contlemma}, with $x=y$. The H\"older continuity property is a consequence of the Kolmogorov criterion, applied again to \eqref{contest}. Indeed we get that $\psi$ is $\alpha$-H\"older continuous in space, for every $\alpha<e^{-CT}-2/p$, and $\beta$-H\"older continuous in time, for every $\beta<1/2-1/p$, so for every $\beta<1/2$.

As for the measure-preserving property, we will prove that, for every bounded measurable $F:\Omega\rightarrow\mr$, every bounded measurable $h:[0,T]\rightarrow\mr$ and every continuous bounded $g:\mt^2\rightarrow\mr$,
\begin{equation}
\int^T_0h(t)E\left[F\int_{\mt^2}g(\psi_t(x))dx\right]\,dt=\int^T_0h(t)E\left[F\int_{\mt^2}g(x)dx\right]\,dt.\label{measpres}
\end{equation}
This will prove that, for a.e.\ $(t,\omega)$, $\psi_t(\omega)$ is measure-preserving. By continuity in $(t,x)$ at $\omega$ fixed, this implies easily that, for a.e. $\omega$, $\psi_t(\omega)$ is measure-preserving for every $t$. Since the approximating flows $\psi^\eps$'s are measure-preserving (remember Remark \ref{approxmeaspres}), equality \eqref{measpres} holds for the $\psi^\eps$'s. By the convergence in $L^p$, we can find a subsequence $\psi^{\eps_n}$ such that $(\psi^{\eps_n})_n$ converges to $\psi$ for a.e.\ $(t,x,\omega)$. Passing to the limit along this subsequence (using dominated convergence theorem), we get \eqref{measpres} for $\psi$. The proof is complete.
\end{proof}


\begin{remark}
With a small effort, one could also show the injectivity of $\psi_t(\omega)$ for all $t$, for a.e.\ $\omega$ (essentially, one has to extend Lemma \ref{contlemma} to negative $p$ and use Kolmogorov criterion for $|\psi_t(x)-\psi_t(y)|^{-1}$). Surjectivity and continuity of the inverse map follow from the continuity and the measure-preserving property. The range of a measure-preserving continuous map is a compact set, whose complement (an open set) is Lebesgue-negligible. Thus this range must be the whole $\mt^2$. Thus the flow is a actually a flow of homeomorphisms.
\end{remark}

\begin{corollary}\label{stocu}
Let $\xi$ be an element of $L^\infty([0,T]\times\mt^2\times\Omega)$. Then equation \eqref{stoclinearized} with $u=u^\xi$ has a unique solution, for every deterministic initial datum, which enjoys the properties in Lemma \ref{stocauxlemma}.
\end{corollary}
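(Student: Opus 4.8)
The plan is to read off this statement as an immediate instance of Lemma \ref{stocauxlemma}. That lemma already constructs the unique measure-preserving continuous flow solving (\ref{stoclinearized}) for \emph{any} random vector field $u$ obeying the two standing hypotheses attached to that equation, namely progressive measurability of $(t,\omega)\mapsto u(t,x,\omega)$ for each fixed $x$ and the uniform log-Lipschitz bound (\ref{bddu}). Hence the entire task reduces to verifying that the specific field $u=u^\xi=K*\xi$ meets these two requirements, with a log-Lipschitz constant controlled by $\|\xi\|_{L^\infty}$.

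First I would verify the log-Lipschitz bound. For almost every fixed $(t,\omega)$ the section $w:=\xi_t(\cdot,\omega)$ lies in $L^\infty(\mt^2)$ with $\|w\|_{L^\infty}\le\|\xi\|_{L^\infty([0,T]\times\mt^2\times\Omega)}=:M$, and by definition $u^\xi(t,\cdot,\omega)=u^{w}$. Corollary \ref{logLipu} then gives
\begin{equation*}
|u^\xi(t,x,\omega)-u^\xi(t,x',\omega)|\le CM\,\gamma(|x-x'|),\qquad\forall\,x,x'\in\mt^2,
\end{equation*}
with the constant $CM$ independent of $t$, $x$, $x'$ and $\omega$; on the $(t,\omega)$-null set where the section fails to be bounded by $M$ one simply redefines $u^\xi:=0$, so that (\ref{bddu}) holds for all $(t,x,\omega)$ with $C$ replaced by $CM$.

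Second I would check the progressive measurability. For a fixed $x$ we have $u^\xi(t,x,\omega)=\lan\xi_t(\cdot,\omega),K(x-\cdot)\ran$; since $K\in L^p(\mt^2)$ for every $p<2$ and $\mt^2$ has finite measure we get $K(x-\cdot)\in L^1(\mt^2)$, so the weak progressive measurability of $\xi$ (available in every application of this corollary, and indeed built into the notion of solution used later) makes $(t,\omega)\mapsto u^\xi(t,x,\omega)$ progressively measurable. With both hypotheses in force, Lemma \ref{stocauxlemma} applies verbatim and yields the unique solution for every deterministic initial datum, represented by a unique stochastic measure-preserving continuous flow of class $C^\alpha$ in space for some $\alpha>0$ and $C^\beta$ in time for every $\beta<1/2$.

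The substantive analytic work --- the Cauchy estimates for the mollified equations, the Kolmogorov continuity criterion, and the passage to the limit establishing the measure-preserving property --- has already been carried out in Lemma \ref{stocauxlemma}, so there is no real obstacle here. The only point demanding a moment's care is the measurability verification, which rests on the $L^1$-integrability of $K$ together with the weak progressive measurability of $\xi$; everything else is a direct quotation of the preceding lemma.
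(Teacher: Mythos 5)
Your proposal is correct and follows exactly the route the paper intends: the corollary is stated without proof precisely because it is the instance of Lemma \ref{stocauxlemma} obtained by checking that $u^\xi=K*\xi$ satisfies the standing hypotheses on $u$, the uniform log-Lipschitz bound coming from Corollary \ref{logLipu} with constant $C\|\xi\|_{L^\infty}$ and the progressive measurability from $K(x-\cdot)\in L^1(\mt^2)$ together with the weak progressive measurability of $\xi$. Your extra care on the measurability point and the null-set modification is a reasonable filling-in of details the paper leaves implicit.
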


\subsection{Stochastic Euler flows}\label{SEf}

The rest of the section goes on in analogy with the deterministic case. We define a space
\begin{eqnarray*}
SM_T&=&\Big\{\psi:[0,T]\times\mt^2\times\Omega\rightarrow\mt^2:\psi\mbox{ measurable w.r.t.\ }\mc{P}\times\mc(B)(\mt^2),\\ &&\sup_{[0,T]}\int_{\mt^2}E[|\psi_t(x)|]dx<+\infty,\psi_t\mbox{ meas.-pres. for a.e. }(t,\omega)\Big\}.
\end{eqnarray*}
Here $\mc{P}$ is the predictable $\sigma$-algebra associated with the filtration $(\mc{F}_t)_t$. It is a complete metric space, endowed with the distance $dist(\psi^1,\psi^2)=\sup_{[0,T]}\int_{\mt^2}E|\psi^1_t(x)-\psi^2_t(x)|\,dx$. For a given measure-preserving stochastic flow $\psi$ in $SM_T$, we call $G(\psi)$ the unique solution to the SDE \eqref{stoclinearized} with $u=u^\psi$. Recall again that
\begin{equation*}
u^\psi(t,x)=\int_{\mt^2}K(x-\psi_t(y))\, \xi_0(y)dy.
\end{equation*}
enjoys the log-Lipschitz property \eqref{logLip2} and it is also progressively measurable as required in the previous section, so that $G$ takes values in $SM_T$.

\begin{remark}
One may ask at this point why, in the definition on $SM_T$, we have the supremum in time outside the expectation and not inside (while Burkholder inequality allows supremum inside, in some cases). The reason is that the argument works with the supremum outside and putting the supremum inside could create additional difficulties. A posteriori, since the flow solution $\Phi$ to \eqref{stocEulerflow} is in the image of $G$, it is continuous and also H\"older continuous.
\end{remark}

\begin{lemma}\label{stockeylemma} For every $\eps>0$ (small enough), for every $\psi^1$, $\psi^2$ flows in $SM_T$, the following estimates hold:
\begin{eqnarray}
\lefteqn{\int_{\mt^2}E|G(\psi^1)_t(x)-G(\psi^2)_t(x)|\,dx}\nonumber\\
&\le& L_K\|\xi_0\|_{L^\infty}\int^t_0\gamma\left(\int_{\mt^2}E|\psi^1_s(x)-\psi^2_s(x)|\,dx\right)ds\nonumber\\
 &+& L_K\|\xi_0\|_{L^\infty}\int^t_0\gamma\left(\int_{\mt^2}E|G(\psi^1)_s(x)-G(\psi^2)_s(x)|\,dx\right)ds,\nonumber\\
&  +& 2L_\sigma^2\int^t_0\int_{\mt^2}E|G(\psi^1)_s(x)-G(\psi^2)_s(x)|\,dxds,\nonumber\\
\lefteqn{\int_{\mt^2}E|G(\psi^1)_t-G(\psi^2)_t|\,dx}\nonumber\\
&\le& (L_K\|\xi_0\|_{L^\infty}+2L_\sigma^2)(-\log\eps)\int^t_0\int_{\mt^2}E|G(\psi^1)_s-G(\psi^2)_s|\,dxds\nonumber\\
&+& L_K\|\xi_0\|_{L^\infty}(-\log\eps)\int^t_0\int_{\mt^2}E|\psi^1_s-\psi^2_s|\,dxds + 2L_K\|\xi_0\|_{L^\infty}t\eps\nonumber.
\end{eqnarray}
\end{lemma}

\begin{proof} We would like to apply It\^o formula to the modulus function and get an estimate for $|G(\psi^1)_t(x)-G(\psi^2)_t(x)|$. Since the modulus is not $C^2$, we use the approximate functions $f_\delta(x)=(|x|^2+\delta)^{1/2}$, for $\delta>0$. Calling $Z=G(\psi^1)_t(x)-G(\psi^2)_t(x)$, we have
\begin{eqnarray*}
&&\hspace{-1truecm}\lefteqn{d[f_\delta(Z)]= f_\delta(Z)^{-1}Z\cdot[u^{\psi^1}(G(\psi^1))-u^{\psi^2}(G(\psi^2))]\,dt}
 \\&+& \sum_kf_\delta(Z)^{-1}|\sigma_k(G(\psi^1))-\sigma_k(G(\psi^2))|^2dt  \nonumber\\
& +& \sum_kf_\delta(Z)^{-3}[(G(\psi^1)-G(\psi^2))\cdot(\sigma_k(G(\psi^1))-\sigma_k(G(\psi^2)))]^2dt  \nonumber\\
& +& \sum_kf_\delta(Z)^{-1}Z\cdot[\sigma_k(G(\psi^1))-\sigma_k(G(\psi^2))]dW.\nonumber
\end{eqnarray*}
Taking the expectation and using the Lipschitz property of $\sigma$, since $f_\delta(x)\ge|x|$, we get
\begin{eqnarray*}
E[|Z_t|]&\le& \int^t_0E[|u^{\psi^1}_s(G(\psi^1)_s(x))-u^{\psi^2}_s(G(\psi^2)_s(x))|]ds \\
&+& 2L_\sigma^2\int^t_0E[|Z_s|]ds.
\end{eqnarray*}
The rest of the proof follows the lines of Lemma \ref{keylemma}: we estimate $\int_{\mt^2}|u^{\psi^1}_s(G(\psi^1)_s(x))-u^{\psi^2}_s(G(\psi^2)_s(x))|\,dx$ and use Jensen inequality to pass $\gamma$ outside the integral in $x$ and outside the expectation. The second inequality is a consequence of the first one.
\end{proof}

\begin{proof}[Proof of Theorem \ref{mainflow}]
Similar to the proof of Theorem \ref{existuniq}, we only recall the main passages.

\textbf{First step}. We prove the existence and the uniqueness on an interval $[0,T_1]$, with $T_1$ small enough (but deterministic). The iteration scheme is completely similar to the one in the deterministic case: we consider $\psi^0_t(x)=x$, $\psi^{n+1}=G(\psi^n)$, $\rho^n_t=\sup_{k\ge n}\int_{\mt^2}E|\psi^{k+1}_t(x)-\psi^k_t(x)|\,dx$ and proceed as in the deterministic case, getting a limit flow $\Phi$ in $SM_{T_1}$, for $T_1$ such that $\alpha:=2e(L_K\|\xi_0\|_{L^\infty}+L_\sigma^2)T_1<1$ (notice that $T_1$ is independent of $\omega$, since all the estimates are in expectation). Such a flow solves \eqref{stocEulerflow}, because $G$ is continuous in $SM_T$: indeed, from Lemma \ref{stockeylemma} again by comparison with $z$
\begin{eqnarray*}
\lefteqn{\int_{\mt^2}E|G(\psi^1)_t(x)-G(\psi^2)_t(x)|\,dx} \\
& \leq& z^{L_K\|\xi_0\|_{L^\infty}+2L_\sigma^2}\Big(t,L_K\|\xi_0\|_{L^\infty}T\gamma\big(\sup_{s\in[0,T]}\int_{\mt^2}E|\psi^1_s(x)-\psi^2_s(x)|\,dx\big)\Big)
\end{eqnarray*}
and, if $dist(\psi^1,\psi^2)$ is small enough,
\begin{eqnarray*}
\lefteqn{\int_{\mt^2}E|G(\psi^1)_t(x)-G(\psi^2)_t(x)|\,dx} \\
&\leq& e\Big(L_K\|\xi_0\|_{L^\infty}T\gamma\big(\sup_{s\in[0,T]}\int_{\mt^2}E|\psi^1_s(x)-\psi^2_s(x)|\,dx\big)\Big)^{\exp[-(L_K\|\xi_0\|_{L^\infty}+2L_\sigma^2)t]}.\nonumber
\end{eqnarray*}
The uniqueness on $[0,T_1]$ is also proved in the same way of the deterministic case.

\textbf{Second step.} We prove the global existence and uniqueness. For this, as in the deterministic case, we solve the equation on $[T_1,2T_1]$
\begin{equation*}
\Phi_t(x)=\Phi_{T_1}(x)+\int^t_{T_1}\int_{\mt^2}K(\Phi_s(x)-\Phi_s(y))\, \xi_0(y)dy+\sum_k\int^t_{T_1}\sigma_k(\Phi_s(x))dW^k_s.
\end{equation*}
To get the existence for this equation, we define the approximating sequence $(\psi^n)_n$ of maps on $[T_1,2T_1]\times\mt^2$ by imposing
\begin{equation}
\psi^n_t(x)=\Phi_{T_1}(x)+\int^t_{T_1}\int_{\mt^2}K(\psi^n_s(x)-\psi^{n-1}_s(y))\, \xi_0(y)dy+\sum_k\int^t_{T_1}\sigma_k(\psi^{n-1}_s(x))dW^k_s\label{stocEulerflowiterdef}
\end{equation}
The existence, the continuity and the measure-preserving property for equation \eqref{stocEulerflowiterdef} are again not a direct consequence of Lemmata \ref{contlemma} and \ref{existlemma}, since here we start from $\Phi_{T_1}$ and not from the identity; here we also have the problem of the randomness of $\Phi_{T_1}$, which brings us to consider the strategy in the deterministic case. Following that strategy of the deterministic case, we can build $\psi^n$ and prove the continuity and the measure-preserving property. Then we apply the previous estimates, again with no chance in the constants and with the final time $T$ replaced by $T-T_1$ (again deterministic). This allows to conclude the existence on $[T_1,2T_1]$. The uniqueness on this interval is as in the step 1.

\textbf{Third step.} The regularity properties hold by Lemmma \ref{contlemma}, since $\Phi=G(\Phi)$ is in the image of $G$ ($G$ now being defined on the whole $[0,T]$).
\end{proof}

\section{The stochastic Euler vorticity equation}\label{stocEulervort_sec}

In this section we will prove Theorem \ref{mainvort}. First we need the existence of solutions to the stochastic Euler vorticity equation \eqref{simplItoform}.

\begin{proposition}
Let $\Phi$ be a solution to \eqref{stocEulerflow}. For $t\ge0$, define $\xi_t=(\Phi_t)_\#\xi_0$. Then $\xi$ has a density (still denoted by $\xi$) in $L^\infty([0,T]\times\mt^2\times\Omega)$, which is a distributional $L^\infty$ solution to the stochastic Euler equation \eqref{simplItoform}.
\end{proposition}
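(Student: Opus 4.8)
The plan is to derive the weak formulation (\ref{distribvort}) by applying It\^o's formula to $\varphi(\Phi_t(x))$ for a fixed test function $\varphi\in C^\infty(\mt^2)$ and then integrating the resulting identity against $\xi_0(x)\,dx$, transferring every term from the reference configuration $\xi_0$ to the current vorticity $\xi_r=(\Phi_r)_\#\xi_0$ by the measure-preserving change of variables $\int_{\mt^2}g(\Phi_r(x))\xi_0(x)\,dx=\lan\xi_r,g\ran$.

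First I would record the structural facts. Since each $\Phi_t$ is measure-preserving, Lemma \ref{abscont} gives that $\xi_t=(\Phi_t)_\#\xi_0$ has a density with $\|\xi_t\|_{L^\infty}\le\|\xi_0\|_{L^\infty}$ for a.e.\ $\omega$ and all $t$, so $\xi\in L^\infty([0,T]\times\mt^2\times\Omega)$. Weak progressive measurability of $\xi$ follows from $\lan\xi_t,f\ran=\int_{\mt^2}f(\Phi_t(x))\xi_0(x)\,dx$ (for $f\in L^1$, by the pushforward definition) together with the joint measurability and adaptedness of $\Phi$; the same reasoning handles $u^\xi\xi$, since $\lan u^\xi_t\xi_t,f\ran=\int_{\mt^2}f(\Phi_t(x))u^\xi_t(\Phi_t(x))\xi_0(x)\,dx$ and $u^\xi_t(\Phi_t(x))=\int_{\mt^2}K(\Phi_t(x)-\Phi_t(y))\xi_0(y)\,dy$ is the (progressively measurable, bounded by Corollary \ref{logLipu}) drift of $\Phi$. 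In particular, for each fixed $x$ the process $t\mapsto\Phi_t(x)$ is an It\^o process with drift $u^\xi_t(\Phi_t(x))$ and diffusion coefficients $\sigma_k(\Phi_t(x))$.

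Next comes the core computation. It\^o's formula applied to $\varphi$ and the semimartingale $\Phi_t(x)$, using $a\equiv CI_2$ so that the quadratic covariation is $d\lan\Phi^i(x),\Phi^j(x)\ran_r=C\delta_{ij}\,dr$, gives
\begin{align*}
\varphi(\Phi_t(x))&=\varphi(x)+\int^t_0\nabla\varphi(\Phi_r(x))\cdot u^\xi_r(\Phi_r(x))\,dr+\frac{C}{2}\int^t_0\Delta\varphi(\Phi_r(x))\,dr\\
&\quad+\sum_k\int^t_0\nabla\varphi(\Phi_r(x))\cdot\sigma_k(\Phi_r(x))\,dW^k_r.
\end{align*}
I would then multiply by $\xi_0(x)$ and integrate over $x\in\mt^2$. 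For the $\varphi$-, drift-, and Laplacian-terms, the pushforward identity (with ordinary Fubini to exchange $dx$ and $dr$) turns them respectively into $\lan\xi_t,\varphi\ran$, $\lan\xi_0,\varphi\ran$, $\int^t_0\lan\xi_r,u^\xi_r\cdot\nabla\varphi\ran\,dr$, and $\frac12\int^t_0\lan\xi_r,\tr[aD^2\varphi]\ran\,dr$ (using $\tr[aD^2\varphi]=C\Delta\varphi$), which are exactly the three non-stochastic terms of (\ref{distribvort}).

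The delicate step, and the one I expect to be the main obstacle, is the martingale term: I must justify exchanging the spatial integral $\int_{\mt^2}(\cdot)\,\xi_0(x)\,dx$ with the It\^o integral $\sum_k\int^t_0(\cdot)\,dW^k_r$ via a stochastic Fubini theorem. The integrand $(r,x,\omega)\mapsto\nabla\varphi(\Phi_r(x))\cdot\sigma_k(\Phi_r(x))\xi_0(x)$ is jointly measurable and progressively measurable in $(r,\omega)$ for each $x$, and the $L^2$ bound
\begin{equation*}
E\int^T_0\int_{\mt^2}\sum_k|\nabla\varphi(\Phi_r(x))\cdot\sigma_k(\Phi_r(x))|^2|\xi_0(x)|^2\,dx\,dr<+\infty
\end{equation*}
holds because $\nabla\varphi$ is bounded, $\xi_0\in L^\infty$, $\mt^2$ has finite measure, and $\sum_k\sup|\sigma_k|^2<+\infty$ by Condition \ref{condsigma}; this legitimizes both the exchange and the $\ell^2$-convergence of the series. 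After the exchange, the pushforward gives $\int_{\mt^2}\nabla\varphi(\Phi_r(x))\cdot\sigma_k(\Phi_r(x))\xi_0(x)\,dx=\lan\xi_r,\sigma_k\cdot\nabla\varphi\ran$, so the martingale term becomes $\sum_k\int^t_0\lan\xi_r,\sigma_k\cdot\nabla\varphi\ran\,dW^k_r$. Collecting the four contributions yields (\ref{distribvort}) for every $\varphi\in C^\infty(\mt^2)$, and continuity in $t$ of $\lan\xi_t,\varphi\ran$ is automatic from the continuity of $\Phi$ in $(t,x)$, completing the proof.
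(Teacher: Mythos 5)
Your proposal is correct and follows essentially the same route as the paper's proof: apply It\^o's formula to $\varphi(\Phi_t(x))$, identify the drift $u^\Phi_t$ with $K*\xi_t=u^{\xi_t}$ via the pushforward, and integrate against $\xi_0\,dx$ to obtain (\ref{distribvort}). The only difference is that you spell out the stochastic Fubini justification and the measurability checks, which the paper leaves implicit.
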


\begin{proof}
Fix $t>0$ and the probabilistic datum $\omega$ (omitted in the sequel). By Lemma \ref{abscont}, since $\Phi_t$ is measure preserving, $\xi_t$ is absolutely continuous with respect to the Lebesgue measure on $\mt^2$ and $\|\xi_t\|_{L^\infty}\le\|\xi_0\|_{L^\infty}$.

Let $\varphi$ be a test function, It\^o formula applied to $\varphi(\Phi_t)$ gives
\begin{eqnarray*}
d[\varphi(\Phi_t)]&=& u^\Phi_t(\Phi_t)\cdot\nabla\varphi(\Phi_t)\,dt+ \sum_k\sigma_k(\Phi_t)\cdot\nabla\varphi(\Phi_t)dW^k_r
\\&+& \frac12\tr[a(\Phi_t)D^2\varphi(\Phi_t)]\,dt.
\end{eqnarray*}
Now notice that, by definition of $\xi_t$, $u^\Phi_t=K*\xi_t$; so, integrating in $\xi_0dx$, we get \eqref{distribvort}.
\end{proof}

For the proof of the uniqueness, we will adapt a classical argument for the transport equation. We first recall the idea in the case $\sigma_k\equiv0$ for simplicity. A formal application of the chain rule gives
\begin{equation*}
\frac{d}{dt}\xi_t(\Phi_t)=\partial_t\xi_t(\Phi_t)+D\xi_t(\Phi_t)\frac{d\Phi_t}{dt}=(\partial_t\xi_t+u_t\cdot\nabla\xi_t)(\Phi_t)=0.
\end{equation*}
This implies that $\xi_t(\Phi_t)=\xi_0$, so that $\xi_t=\xi_0(\Phi_t^{-1})$ is completely determined by the flow. But we have used the chain rule for an object ($\xi_t$) which is not regular in general (and in fact there are counterexamples for irregular drifts). Thus we need to regularize $\xi$. This regularization $\xi^\eps$ solves a transport-type equation with an additional term, a commutator, which we need to control to conclude the argument. We use for this the argument in \cite{DiPLio}, \cite{Amb}, \cite{AmbCri}, where the commutator is an essential tool for the uniqueness of the transport equation.

First we need approximate identities. For this, let $\rho$ be a $C^\infty(\mr^2)$ nonnegative even function, with support in $[-1/2,1/2]^2$ and $\int_{\mr^2}\rho dx=1$. For $\eps>0$, define $\rho_\eps(x)=\eps^{-2}\rho(x/\eps)$. If $f$ is an integrable function on $\mt^2$, $f$ can be extended periodically to a locally integrable function on the whole $\mr^2$, so that the convolution $\rho_\eps *f$ makes sense and is still a $C^\infty$ periodic function.

For a vector field $v$ and a function $w$ on the torus, we define formally the commutator as
\begin{equation}
[v\cdot\nabla,\rho_\eps *]w:=v\cdot\nabla(\rho_\eps *w)-\rho_\eps *(v\cdot\nabla w).\label{formalcomm}
\end{equation}
Suppose that $v$ and $w$ are integrable and $v$ is divergence free. Then the expression above defines a measurable function on $\mt^2$. Indeed, the following equalities hold in distribution (the functions being thought as extended to the whole $\mr^2$):
\begin{equation}
\rho_\eps *(v\cdot\nabla w)=\rho_\eps *\diverg(vw)=-\int_{\mr^2}\nabla\rho_\eps(z)\cdot v(\cdot-z)w(\cdot-z)dz.\label{distribcomm}
\end{equation}
Besides, by \eqref{formalcomm} and \eqref{distribcomm}, the commutator reads
\begin{equation*}
[v\cdot\nabla,\rho_\eps *]w(x)=\int_{\mr^2}(v(x)-v(x-z))\cdot\nabla\rho_\eps(z)w(x-z)dz.
\end{equation*}
With the change of variable $y=z/\eps$, $x'=x'_\eps=x-\eps y$ we get
\begin{equation*}
[v\cdot\nabla,\rho_\eps *]w(x)= \int_{\mr^2}\frac{v(x'+\eps y)-v(x')}{\eps}\cdot\nabla\rho(y)w(x')dy.
\end{equation*}
If $v$ is in $W^{1,1}(\mt^2)$, then, for every $y$ in $\mr^2$, for a.e.\ $x'$ in $\mt^2$, $v(x'+\eps y)-v(x')=\eps\int^1_0Dv(x'+\xi\eps y)yd\xi$. Indeed, this is true for $v^\delta=\rho_\delta *v$ and, for fixed $y$, $v^\delta(x'+\eps y)-v^\delta(x')-\eps\int^1_0Dv^\delta(x'+\xi\eps y)yd\xi$, as function of $x'$, converges to $0$ a.e.\ as $\delta\rightarrow0$ (possibly passing to a subsequence). So, in this case, the commutator has the following expression:
\begin{equation}
[v\cdot\nabla,\rho_\eps *]w(x')= \int_{\mr^2}\int^1_0Dv(x'+\xi\eps y)yd\xi\cdot\nabla\rho(y)w(x')dy.\label{commrewritten}
\end{equation}

\begin{lemma}[Commutator lemma]
Let $p$ be in $[1,+\infty[$, let $v$ be in $W^{1,p}(\mt^2)$ with zero divergence, let $w$ be in $L^\infty(\mt^2)$. Then
\begin{equation*}
\lim_{\eps\rightarrow0}[v\cdot\nabla,\rho_\eps *]w=0\ \ \mbox{in }L^p(\mt^2)
\end{equation*}
and we have the inequality
\begin{equation*}
\|[v\cdot\nabla,\rho_\eps *]w\|_{L^p(\mt^2)}\le C\|Dv\|_{L^p(\mt^2)}\|w\|_{L^\infty(\mt^2)}.
\end{equation*}
\end{lemma}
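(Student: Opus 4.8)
The plan is to work entirely from the rewritten expression (\ref{commrewritten}), which is valid for $v$ in $W^{1,1}(\mt^2)$ and hence a fortiori for $v$ in $W^{1,p}(\mt^2)$ with $p\ge1$. I would first establish the inequality, since it both gives the second assertion and legitimizes the limiting arguments by density. Applying Minkowski's integral inequality to pull the $L^p(\mt^2)$-norm in the variable $x'$ inside the integral over $y$, and bounding the multiplier $w(x')$ by $\|w\|_{L^\infty}$, gives
$$\|[v\cdot\nabla,\rho_\eps *]w\|_{L^p}\le \|w\|_{L^\infty}\int_{\mr^2}|y|\,|\nabla\rho(y)|\int^1_0\|Dv(\cdot+\xi\eps y)\|_{L^p}\,d\xi\,dy.$$
Since translation preserves the $L^p(\mt^2)$-norm of the periodic function $Dv$, the inner integral equals $\|Dv\|_{L^p}$, and because $\rho$ has compact support the constant $C:=\int_{\mr^2}|y|\,|\nabla\rho(y)|\,dy$ is finite; this yields the stated bound with a constant independent of $\eps$.

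For the convergence I would follow the DiPerna--Lions scheme. \emph{Step 1 (smooth $v$).} If $v$ is $C^1$ and divergence-free, then $Dv(x'+\xi\eps y)\to Dv(x')$ uniformly as $\eps\to0$, so the integrand in (\ref{commrewritten}) converges pointwise to $(Dv(x')y)\cdot\nabla\rho(y)\,w(x')$. Integrating in $y$ and using the integration-by-parts identity $\int_{\mr^2}y_j\,\partial_i\rho(y)\,dy=-\delta_{ij}$, the pointwise limit becomes
$$w(x')\sum_{i,j}\partial_j v_i(x')\int_{\mr^2}y_j\,\partial_i\rho(y)\,dy=-w(x')\sum_i\partial_i v_i(x')=-w(x')\,\diverg v(x')=0,$$
the final cancellation being exactly the divergence-free hypothesis. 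Dominating the integrand uniformly via the bound from the first part, dominated convergence upgrades this to convergence to $0$ in $L^p(\mt^2)$.

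\emph{Step 2 (density).} For general $v$ in $W^{1,p}$, I would approximate by $v^\delta=\rho_\delta*v$, which is smooth and still divergence-free because mollification commutes with the divergence. Since the commutator is linear in $v$, the already-proven inequality applied to $v-v^\delta$ gives $\|[(v-v^\delta)\cdot\nabla,\rho_\eps*]w\|_{L^p}\le C\|D(v-v^\delta)\|_{L^p}\|w\|_{L^\infty}$, crucially \emph{uniformly in} $\eps$. Splitting $\|[v\cdot\nabla,\rho_\eps*]w\|_{L^p}$ into this difference plus $\|[v^\delta\cdot\nabla,\rho_\eps*]w\|_{L^p}$, one first fixes $\delta$ small (using $\|D(v-v^\delta)\|_{L^p}\to0$) to make the difference uniformly small, then sends $\eps\to0$ with $\delta$ fixed and invokes Step 1 for the smooth field $v^\delta$.

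I expect the genuinely delicate point to be this density step: the argument closes only because the bound from the first part is uniform in $\eps$, which is what allows the two limits to be taken in the correct order. The algebraic heart of the proof is the computation in Step 1 showing that the smooth limit equals precisely $-w\,\diverg v$; this is where the divergence-free assumption is indispensable, for without it the commutator would converge to a nonzero multiple of $w\,\diverg v$ rather than vanishing.
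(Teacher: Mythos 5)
Your argument is essentially correct and reaches the same two conclusions, but it is organized differently from the paper's proof. For the $L^p$ bound you use Minkowski's integral inequality (getting the constant $\int|y|\,|\nabla\rho(y)|\,dy$) where the paper uses H\"older on the $p$-th power (getting $\int|y|^p|\nabla\rho(y)|^p\,dy$); both are fine. For the convergence, you follow the classical DiPerna--Lions scheme: prove the limit for smooth divergence-free $v$, then transfer it to $v\in W^{1,p}$ by mollifying $v$ and exploiting that the commutator bound is linear in $v$ and uniform in $\eps$. The paper instead works with $W^{1,p}$ fields directly: it identifies the candidate limit $w(x)\int Dv(x)y\cdot\nabla\rho(y)\,dy=-w\,\diverg v=0$ and controls the difference by the continuity of translations in $L^p$ applied to $Dv$ and to $w\,Dv$, with no density step in $v$ at all. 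Your route buys a cleaner separation between the algebraic cancellation and the functional-analytic limit; the paper's buys a one-pass argument.

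One point in your Step 1 needs repair. In the representation \eqref{commrewritten} the point $x'=x-\eps y$ \emph{depends on $y$ and $\eps$}: the commutator, as a function of the output variable $x$, is
\begin{equation*}
[v\cdot\nabla,\rho_\eps *]w(x)=\int_{\mr^2}\int_0^1 Dv\bigl(x-\eps y+\xi\eps y\bigr)y\,d\xi\cdot\nabla\rho(y)\,w(x-\eps y)\,dy .
\end{equation*}
So even for $C^1$ divergence-free $v$ the integrand does \emph{not} converge pointwise to $Dv(x)y\cdot\nabla\rho(y)\,w(x)$, because $w$ is merely $L^\infty$ and $w(x-\eps y)\not\to w(x)$ pointwise; you cannot pull $w(x')$ out of the $y$-integral as if it were constant in $y$. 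The fix is routine: the $Dv$ factor converges uniformly for smooth $v$, and the remaining discrepancy $\int Dv(x)y\cdot\nabla\rho(y)\bigl(w(x-\eps y)-w(x)\bigr)dy$ tends to $0$ in $L^p(\mt^2)$ (not pointwise) by continuity of translations applied to $w\in L^\infty\subset L^p(\mt^2)$ --- exactly the tool the paper uses for $w\,Dv$. With ``pointwise plus dominated convergence'' replaced by this $L^p$ statement, your Step 1 closes and the density step is then carried out correctly.
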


\begin{proof}
The inequality follows integrating in $x$ the $p$-power of the expression on the LHS of \eqref{commrewritten}. Precisely, since $\rho$ is supported on $[-1/2,1/2]^2$, we have by H\"older inequality (remember $x'=x+\eps y$)
\begin{eqnarray*}
\lefteqn{\int_{\mt^2}|[v\cdot\nabla,\rho_\eps *]w|^pdx}\\
&\le& \int_{\mr^2}\int_{\mt^2}\int^1_0|Dv(x'+\xi\eps y)|^pd\xi|w(x')|^pdx'|y|^p|\nabla\rho(y)|^pdy \nonumber\\
&\le& \|Dv\|^p_{L^p(\mt^2)}\|w\|^p_{L^\infty(\mt^2)}\int_{\mr^2}|y|^p|\nabla\rho(y)|^pdy
\nonumber
\end{eqnarray*}
(the integral in $x'$ should be on $\mt^2-\eps y$, but by periodicity we can integrate on $\mt^2$ as well).

For the limit, it is enough to show that
\begin{equation*}
L^p(\mt^2)\mbox{-}\lim_{\eps\rightarrow0}[v\cdot\nabla,\rho_\eps *]w=w(\cdot)\left(\int_{\mr^2}Dv(\cdot)y\cdot\nabla\rho(y)dy\right).
\end{equation*}
Indeed, by the symmetry property of $\rho$, $\int_{\mr^2}y_i\partial_j\rho(y)dy=-C\delta_{ij}$ (where $C$ is independent of $i$) and so $\int_{\mr^2}Dv(x)y\cdot\nabla\rho(y)dy=-C\diverg w=0$. By \eqref{commrewritten} we have
\begin{eqnarray*}
\lefteqn{\int_{\mt^2}\left|[v\cdot\nabla,\rho_\eps *]w(x)-w(x)\left(\int_{\mr^2}Dv(x)y\cdot\nabla\rho(y)dy\right)\right|^pdx \le\int_{\mr^2}\int_{\mt^2}\int^1_0}\\
&& |w(x')Dv(x'+\xi\eps y)-w(x'+\eps y)Dv(x'+\eps y)|^pd\xi dx'|y|^p|\nabla\rho(y)|^pdy,
\end{eqnarray*}
hence it is enough to prove that
\begin{equation*}
\int_{\mt^2}\int^1_0|w(x')Dv(x'+\xi\eps y)-w(x'+\eps y)Dv(x'+\eps y)|^pd\xi dx'\rightarrow0
\end{equation*}
uniformly in $y$.
Using the continuity of translations in $L^p$ for the function $wDv$, we need only to show that
\begin{equation*}
\int_{\mt^2}\int^1_0|w(x')Dv(x'+\xi\eps y)-w(x')Dv(x')|^pd\xi dx'\rightarrow0.
\end{equation*}
Since $w$ is in $L^\infty$, this follows from $\int_{\mt^2}\int^1_0|Dv(x'+\xi\eps y)-Dv(x')|^pd\xi dx'\rightarrow0$, which is again a consequence of continuity of translation in $L^p$ applied to $Dv$.
\end{proof}

\begin{proposition}\label{reprEuler}
Let $\xi$ be a (distributional) $L^\infty$ solution to the stochastic Euler vorticity equation. Let $\Phi$ be a measure-preserving stochastic flow, which solves \eqref{stoclinearized} with $u=u^\xi$ (it exists by Corollary \ref{stocu}). Then $\xi_t=(\Phi_t)_\#\xi_0$.
\end{proposition}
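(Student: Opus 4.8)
The plan is to prove this Lagrangian representation by the stochastic version of the DiPerna--Lions renormalization argument. Formally $\frac{d}{dt}\xi_t(\Phi_t)=0$, so $\xi_t\circ\Phi_t=\xi_0$; but $\xi$ is only $L^\infty$, so the chain rule is not licit and I must regularize in space, apply a generalized It\^o (It\^o--Wentzell) formula, and show the resulting commutators vanish.

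First I would mollify: put $\xi^\eps_t=\rho_\eps*\xi_t$ with $\rho_\eps$ as in the Commutator Lemma. Inserting $\varphi=\rho_\eps(x-\cdot)$ into the weak formulation (\ref{distribvort}) and using $\diverg u^\xi=\diverg\sigma_k=0$, one gets, for each fixed $x$, the It\^o equation
\begin{equation*}
d\xi^\eps_t=\Big[-u^\xi_t\cdot\nabla\xi^\eps_t+r^\eps_t+\tfrac12 C\Delta\xi^\eps_t\Big]dt+\sum_k\Big[-\sigma_k\cdot\nabla\xi^\eps_t+r^{\eps,k}_t\Big]dW^k_t,
\end{equation*}
where $r^\eps=[u^\xi\cdot\nabla,\rho_\eps*]\xi$ and $r^{\eps,k}=[\sigma_k\cdot\nabla,\rho_\eps*]\xi$ are the first-order commutators of (\ref{formalcomm}); crucially the second-order term is $\tfrac12 C\Delta\xi^\eps$ \emph{exactly}, with no commutator, because $\Delta$ commutes with convolution (this is the first role of $a\equiv CI_2$). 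I would then apply the It\^o--Wentzell formula to $\xi^\eps_t(\Phi_t(x))$, using that $\Phi$ solves (\ref{stoclinearized}) with the same $u^\xi$ and $\sigma_k$ and that $\sum_k\sigma_k^i\sigma_k^j=C\delta_{ij}$. The transport terms $\mp u^\xi\cdot\nabla\xi^\eps$ and $\mp\sigma_k\cdot\nabla\xi^\eps$ cancel pairwise; the three second-order contributions---$\tfrac12 C\Delta\xi^\eps$ from the equation, $\tfrac12 C\Delta\xi^\eps$ from the It\^o correction of the flow, and $-C\Delta\xi^\eps$ from the It\^o--Stratonovich cross term against the smooth part of the noise (using $\sum_k(\sigma_k\cdot\nabla)^2\xi^\eps=C\Delta\xi^\eps$, valid since $a$ is constant)---cancel as well, leaving
\begin{equation*}
\xi^\eps_t(\Phi_t(x))-\xi^\eps_0(x)=\int^t_0\big(r^\eps+T^\eps\big)(\Phi_s(x))\,ds+\sum_k\int^t_0 r^{\eps,k}(\Phi_s(x))\,dW^k_s,
\end{equation*}
with a residual second-order term $T^\eps=\sum_k\sigma_k\cdot\nabla r^{\eps,k}$.

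It then remains to let $\eps\to0$. The Commutator Lemma gives $r^\eps\to0$ in $L^1$, noting that $u^\xi=K*\xi\in W^{1,p}$ for every $p<\infty$ (Calder\'on--Zygmund bounds for $\nabla K$ applied to $\xi\in L^\infty\subset L^p$), and the bound is uniform enough in time to pass the $ds$-integral. For the martingale term, It\^o's isometry together with the measure-preserving property of $\Phi_s$ gives
\begin{equation*}
\int_{\mt^2}E\Big|\sum_k\int^t_0 r^{\eps,k}(\Phi_s(x))\,dW^k_s\Big|^2dx=\int^t_0\sum_k E\|r^{\eps,k}_s\|^2_{L^2}\,ds\longrightarrow0,
\end{equation*}
because $\|r^{\eps,k}\|_{L^2}\le C\|D\sigma_k\|_{L^2}\|\xi\|_{L^\infty}\to0$ for each $k$ and the series is dominated using $\sum_k\mathrm{Lip}(\sigma_k)^2<\infty$; this is exactly where Condition \ref{condsigma} in the form $\sigma\in\ell^2(C^{0,1})$ enters. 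Granting also $T^\eps\to0$ in $L^1$ (see below), I pass to the limit: since for a.e.\ $\omega$ the map $\Phi_t$ is a measure-preserving homeomorphism, $\xi^\eps_t(\Phi_t(\cdot))\to\xi_t(\Phi_t(\cdot))$ and $\xi^\eps_0\to\xi_0$ in $L^2(dx)$, whence $\xi_t(\Phi_t(x))=\xi_0(x)$ for a.e.\ $(x,\omega)$ and every $t$; equivalently $\xi_t=\xi_0\circ\Phi_t^{-1}=(\Phi_t)_\#\xi_0$.

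The crux is the residual term $T^\eps=\sum_k\sigma_k\cdot\nabla r^{\eps,k}$: it scales a priori like $\eps^{-1}$ and is \emph{not} controlled by the first-order commutator estimate. I would treat it by symmetrizing. Using $\sum_k\sigma_k(x)\cdot M\sigma_k(x)=C\,\tr M$ for every symmetric matrix $M$ (constancy of $a$) to rewrite the stray $C\Delta\xi^\eps$, one obtains the manifestly quadratic expression
\begin{equation*}
T^\eps(x)=\tfrac12\int_{\mr^2}\sum_k\Big(\tfrac{\sigma_k(x)-\sigma_k(x-\eps y)}{\eps}\Big)\cdot\nabla^2\rho(y)\Big(\tfrac{\sigma_k(x)-\sigma_k(x-\eps y)}{\eps}\Big)\,\xi(x-\eps y)\,dy.
\end{equation*}
The difference quotients converge in $L^2_x$ to $D\sigma_k(x)\,y$, and the $y$-integral of the limiting integrand equals $\tfrac12\xi(x)\sum_k\tr\big[(D\sigma_k(x))^2\big]=\tfrac12\xi(x)\sum_{ij}\partial_i\partial_j a_{ij}(x)=0$, the last identity being the \emph{second} consequence of $a\equiv CI_2$, namely $\diverg\diverg a=0$ (its constancy, not merely $\diverg a=0$). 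Combined with $\sum_k\mathrm{Lip}(\sigma_k)^2<\infty$ and continuity of translations in $L^2$, this yields $\|T^\eps\|_{L^1}\to0$ by dominated convergence in $y$. Carrying out this cancellation by hand, without resorting to a general second-order commutator lemma (which would demand $C^2$ regularity of the $\sigma_k$), is the technical heart of the argument.
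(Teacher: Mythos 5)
Your proposal follows the same route as the paper's proof: mollify $\xi$, write the equation for $\xi^\eps$ with the first--order commutators $r^\eps$, $r^{\eps,k}$ on the right--hand side, apply the It\^o--Kunita--Wentzell formula to $\xi^\eps_t(\Phi_t)$, kill the commutators with the Commutator Lemma (using $u^\xi\in W^{1,p}$ via Calder\'on--Zygmund and $\sigma\in\ell^2(C^{0,1})$ for the martingale part), and pass to the limit using the measure--preserving property. Up to that point you and the paper coincide, including the $L^2$--isometry bookkeeping for the stochastic integral of the $r^{\eps,k}$.

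The genuine difference is your residual term $T^\eps=\sum_k\sigma_k\cdot\nabla r^{\eps,k}$. The paper's displayed identity
\begin{equation*}
d\xi^\eps_t(\Phi_t)= [u_t\cdot\nabla,\rho_\eps *]\xi_t(\Phi_t)\,dt+ \sum_k[\sigma_k\cdot\nabla,\rho_\eps *]\xi_t(\Phi_t)\,dW^k
\end{equation*}
contains no such term: the authors let the two contributions $\frac12C\Delta\xi^\eps$ (from the equation and from the It\^o correction of the flow) cancel against the cross--variation term $-\sum_k\sigma_k\cdot\nabla\big[(\sigma_k\cdot\nabla\xi)*\rho_\eps\big]$, and the fact that $\tfrac12\tr[aD^2]=\tfrac12C\Delta$ commutes with convolution is invoked to avoid any second--order commutator. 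If one expands the cross--variation term literally, the difference $\sum_k\sigma_k\cdot\nabla(\sigma_k\cdot\nabla\xi^\eps)-\sum_k\sigma_k\cdot\nabla\big[(\sigma_k\cdot\nabla\xi)*\rho_\eps\big]$ is exactly your $T^\eps=-\sum_k[\sigma_k\cdot\nabla,\rho_\eps*](\sigma_k\cdot\nabla\xi)$, a second--order commutator applied to the distribution $\diverg(\sigma_k\xi)$, to which the first--order Commutator Lemma does not apply. So your decision to isolate and estimate it is the more scrupulous reading of the It\^o--Wentzell formula, and your key cancellation $\sum_k\tr[(D\sigma_k)^2]=\sum_{ij}\partial_i\partial_ja_{ij}=0$ (constancy of $a$, not just $\diverg a=0$) is the right structural input.

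There is, however, a gap in your closed formula for $T^\eps$. Carrying out the differentiation of $r^{\eps,k}(x)=\int(\sigma_k(x)-\sigma_k(x-z))\cdot\nabla\rho_\eps(z)\xi(x-z)\,dz$ and integrating by parts in $z$, one arrives at
\begin{equation*}
T^\eps(x)=\sum_k\int\sum_{i,j}\sigma_{k,j}(x)\big(\sigma_{k,i}(x)-\sigma_{k,i}(x-z)\big)\,\partial_i\partial_j\rho_\eps(z)\,\xi(x-z)\,dz,
\end{equation*}
which is \emph{not} manifestly quadratic in difference quotients: symmetrizing by writing $\sigma_{k,j}(x)=(\sigma_{k,j}(x)-\sigma_{k,j}(x-z))+\sigma_{k,j}(x-z)$ produces, besides your symmetric term (which your argument does control and whose limit vanishes by $\sum_k\tr[(D\sigma_k)^2]=0$), a cross term carrying only one difference quotient against $\partial_i\partial_j\rho_\eps$, which a priori scales like $\eps^{-1}$. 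Killing it requires a further integration by parts in $z$ together with $\diverg\sigma_k=0$ and $\sum_k(\sigma_k\cdot\nabla)\sigma_k=0$, and this again regenerates a derivative of $\xi$ that must be handled distributionally. So the ``technical heart'' you point to is real, but your claimed identity for $T^\eps$ skips the part of it that is actually delicate for merely Lipschitz $\sigma_k$; as written, the step ``$T^\eps$ equals the symmetric quadratic expression'' is not justified.
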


\begin{proof}
We will prove that $\xi_t(\Phi_t)=\xi_0$ Lebesgue-a.e.. Having this, then, for every measurable bounded function $\varphi$ on $\mt^2$, $\lan\xi_t,\varphi\ran=\lan\xi_t(\Phi_t),\varphi(\Phi_t)\ran=\lan\xi_0,\varphi(\Phi_t)\ran$ (in the first equality we used the measure-preserving property) and so $\xi_t=(\Phi_t)_\#\xi_0$.

As mentioned before, we need to consider $\xi^\eps_t=\xi_t*\rho_\eps$ instead of $\xi_t$. Notice that, for every $x$, $\xi^\eps_t(x)=\lan\xi_t,\rho_\eps(x-\cdot)\ran$. So $\xi^\eps(x)$ is a progressively measurable process, with continuous trajectories, and the stochastic Euler vorticity equation, applied to the test function $\rho_\eps(x-\cdot)$, gives the following equality:
\begin{equation}
d\xi^\eps+ (u\cdot\nabla\xi)*\rho_\eps\,dt+ \sum_k(\sigma_k\cdot\nabla\xi)*\rho_\eps dW^k- \frac12\tr[aD^2\xi^\eps]\,dt =0,\label{xieps}
\end{equation}
which also reads
\begin{eqnarray*}
d\xi^\eps&+& u\cdot\nabla\xi^\eps\,dt+ \sum_k\sigma_k\cdot\nabla\xi^\eps dW^k- \frac12\tr[aD^2\xi^\eps]\,dt= [u\cdot\nabla,\rho_\eps *]\xi\,dt
\\
&+& \sum_k[\sigma_k\cdot\nabla,\rho_\eps *]\xi dW^k.
\end{eqnarray*}
Now, by \eqref{xieps}, since $\xi^\eps$ is adapted regular (together with $(u\cdot\nabla\xi)*\rho_\eps$, $\sigma_k\cdot\nabla\xi)*\rho_\eps$, $aD^2\xi^\eps$), we can apply It\^o-Kunita-Wentzell formula (see e.g.\ Theorem 8.3, page 188 of \cite{K}, with easy modifications for the case of an infinite number of $k$'s), obtaining for $\xi^\eps_t(\Phi_t)$
\begin{equation*}
d\xi^\eps_t(\Phi_t)= [u_t\cdot\nabla,\rho_\eps *]\xi_t(\Phi_t)\,dt+ \sum_k[\sigma_k\cdot\nabla,\rho_\eps *]\xi_t(\Phi_t) dW^k.
\end{equation*}
Since $\Phi$ is measure-preserving, integrating in space we get
\begin{eqnarray*}
&&E[\int_{\mt^2}|\xi^\eps_t(\Phi_t)-\xi_0|\,dx]\le \int^t_0\int_{\mt^2}E[|[u_r\cdot\nabla,\rho_\eps *]\xi_r|]dxdr\\
&+& \sum_k\int^t_0\int_{\mt^2}E[|[\sigma_k\cdot\nabla,\rho_\eps *]\xi_r|^2]^{1/2}dxdr.
\end{eqnarray*}

By the Commutator Lemma, for a.e.\ $r$ and $\omega$ in $\Omega$, $\int_{\mt^2}|[u_r\cdot\nabla,\rho_\eps *]\xi_r|\,dx$ tends to $0$ as $\eps\rightarrow0$. Besides, this term is dominated by
\begin{equation*}
C\|Du_r\|_{L^1(\mt^2)}\|\|\xi_r\|_{L^\infty(\mt^2)}\le C'\|\xi\|^2_{L^\infty([0,T]\times\mt^2\times\xi)}.
\end{equation*}
Indeed, for every $v$ in $L^\infty(\mt^2)$ and every finite $p\ge1$, $\|D(K*v)\|_{L^p(\mt^2)}\le C\|D^2(-\Delta)^{-1}v\|_{L^p(\mt^2)}\le C'\|v\|_{L^\infty(\mt^2)}$. So dominated convergence theorem gives that
\begin{equation*}
\lim_{\eps\rightarrow0}\int^t_0\int_{\mt^2}E[|[u_r\cdot\nabla,\rho_\eps *]\xi_r|]dxdr =0.
\end{equation*}

Similarly, for every $k$, for a.e.\ $r$ and $\omega$ in $\Omega$, $\int_{\mt^2}|[\sigma_kr\cdot\nabla,\rho_\eps *]\xi_r|^2dx$ tends to $0$ as $\eps\rightarrow0$ and is dominated by
\begin{equation*}
C\|D\sigma_k\|^2_{L^2(\mt^2)}\|\xi_r\|^2_{L^\infty(\mt^2)}.
\end{equation*}
Since $\sum_k\|D\sigma_k\|^2_{L^2(\mt^2)}\le \|\sum_k|D\sigma_k|^2\|_{L^\infty(\mt^2)}<+\infty$ by hypothesis, then we have (again by dominated convergence theorem)
\begin{equation*}
\lim_{\eps\rightarrow0} \sum_k\int^t_0\int_{\mt^2}E[|[\sigma_k\cdot\nabla,\rho_\eps *]\xi_r|^2]dxdr=0 .
\end{equation*}


Thus, for any fixed $t>0$, $\xi^\eps_t(\Phi_t)$ tends to $\xi_0$ in $L^1(\mt^2\times\Omega)$ as $\eps\rightarrow0$. Since $\xi^\eps_t$ converges to $\xi_t$ in $L^1(\mt^2\times\Omega)$ (the convergence in $L^1(\mt^2)$ being dominated by $\|\xi\|_{L^\infty}$) and $\Phi_t$ is measure-preserving, $\xi_t^\eps(\Phi_t)$ converges to $\xi_t(\Phi_t)$ in $L^1(\mt^2\times\Omega)$ and thus $\xi_t(\Phi_t)=\xi_0$, which is our thesis.
%
\end{proof}

\begin{corollary}
The uniqueness for the stochastic Euler vorticity equation (in the class of $L^\infty$ solutions) holds.
\end{corollary}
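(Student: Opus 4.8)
The plan is to reduce uniqueness of the vorticity to the uniqueness of the stochastic Euler flow, which has already been established in Theorem \ref{mainflow}. Suppose $\xi^1$ and $\xi^2$ are two distributional $L^\infty$ solutions of the stochastic Euler vorticity equation, both associated to the same initial datum $\xi_0$ and the same cylindrical Brownian motion $W$. For each $i\in\{1,2\}$, Corollary \ref{stocu} produces a measure-preserving stochastic continuous flow $\Phi^i$ solving the linear SDE (\ref{stoclinearized}) with drift $u=u^{\xi^i}$, and Proposition \ref{reprEuler} then identifies the solution with the push-forward of the initial vorticity along this flow:
\begin{equation*}
\xi^i_t=(\Phi^i_t)_\#\xi_0,\qquad i=1,2.
\end{equation*}
Thus it suffices to show that $\Phi^1=\Phi^2$.

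The decisive observation is that the representation formula closes the equation for $\Phi^i$. Indeed, since $\Phi^i_t$ is measure-preserving and $\xi^i_t=(\Phi^i_t)_\#\xi_0$, the change-of-variables formula gives, for each fixed $x\in\mt^2$,
\begin{equation*}
u^{\xi^i}(t,x)=\int_{\mt^2}K(x-y)\,\xi^i_t(y)\,dy=\int_{\mt^2}K(x-\Phi^i_t(y))\,\xi_0(y)\,dy=u^{\Phi^i}(t,x),
\end{equation*}
where the middle identity is legitimate because $K(x-\cdot)\in L^p(\mt^2)$ for some $p<2$ while $\xi_0\in L^\infty(\mt^2)$. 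Consequently the drift $u^{\xi^i}$ appearing in the linear SDE for $\Phi^i$ coincides with $u^{\Phi^i}$, so $\Phi^i$ in fact solves the nonlinear flow equation (\ref{stocEulerflow}). Since both $\Phi^1$ and $\Phi^2$ are measure-preserving stochastic flows solving (\ref{stocEulerflow}) with the same data, the uniqueness part of Theorem \ref{mainflow} forces $\Phi^1=\Phi^2=:\Phi$, and therefore $\xi^1_t=(\Phi_t)_\#\xi_0=\xi^2_t$ for every $t$, which is the desired uniqueness.

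I do not expect a genuine obstacle here, as the hard analytic work is already contained in Proposition \ref{reprEuler} (the commutator argument, via the It\^o--Kunita--Wentzell formula) and in Theorem \ref{mainflow} (the fixed-point construction of the flow). The only points requiring care are bookkeeping ones: one must check that the flow furnished by Corollary \ref{stocu} is indeed measure-preserving and adapted to the given filtration, so that the push-forward identity and the change of variables above are justified and the conclusion is a strong (pathwise) uniqueness statement; and one must confirm that both solutions are compared against flows driven by the same $W$ on the same probability space, which is exactly the setting in which Theorem \ref{mainflow} asserts uniqueness.
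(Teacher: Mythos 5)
Your argument is correct and coincides with the paper's own proof: both use Proposition \ref{reprEuler} to write each solution as $(\Phi^i_t)_\#\xi_0$, observe that this identity forces $u^{\xi^i}=u^{\Phi^i}$ so that each $\Phi^i$ solves the nonlinear flow equation (\ref{stocEulerflow}), and then invoke the uniqueness part of Theorem \ref{mainflow}. The paper's version is just a more compressed statement of exactly this reduction, so no further comparison is needed.
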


\begin{proof}
The above Proposition \ref{reprEuler} tells that a solution $\xi$ to the stochastic Euler vorticity equation is completely determined by the associated flow $\Phi$ which solves \eqref{stoclinearized} with $u=u^\xi$; again for the proposition, $u=u^\Phi$ and so $\Phi$ solves (\ref{stocEulerflow}). Thus the uniqueness for \eqref{stocEulerflow} implies the uniqueness for the stochastic Euler vorticity equation.
\end{proof}

This concludes the proof of Theorem \ref{mainvort}.

\section{Stability}

In this section we want to prove a stability result, both at Lagrangian and Eulerian points of view, when the kernel $K$ is regularized.

Precisely, take a family $(\rho_\eps)_\eps$ of even compactly supported resolutions of identity and define $K^\eps:=K*\rho_\eps$. Consider the approximated non-local ODE
\begin{equation}
\Phi^\eps_t(x)= x +\int^t_0\int_{\mt^2}K^\eps(\Phi^\eps_r(x)-\Phi^\eps_r(y)\, \xi_0(y)dy +\sum^\infty_{k=1}\int^t_0\int_{\mt^2}\sigma_k(\Phi^\eps_r(x))dW^k_r\label{KepsSDE}
\end{equation}
and the approximated stochastic Euler vorticity equation
\begin{equation}
d\xi^\eps +u^{\eps,\xi^\eps}\cdot\nabla\xi^\eps\,dt +\sum_k\sigma_k\cdot\nabla\xi^\eps dW^k = \frac12 C\Delta\xi^\eps,\label{KepsSPDE}
\end{equation}
where $u^{\eps,\xi^\eps}:=K^\eps *\xi^\eps$.

One can repeat all the previous definitions and arguments with $K^\eps$ in place of $K$, to get the analogues of Theorem \ref{mainflow} and Theorem \ref{mainvort}: there exists a unique measure-preserving stochastic continuous flow $\Phi$ solving \eqref{KepsSDE}, which is also $C^\alpha$ in space, for every $\alpha<1$ and $C^\beta$ in time, for every $\beta<1/2$; there exists a unique $L^\infty$ distributional solution $\xi^\eps$ for \eqref{KepsSPDE}. Moreover it holds
\begin{equation}
\xi^\eps_t=(\Phi^\eps_t)_\#\xi_0.\label{repreps}
\end{equation}

The first stability result is for flows:

\begin{proposition}
The family $(\Phi^\eps)_\eps$ converges to $\Phi$ (as $\eps\rightarrow0$) in $C([0,T];L^1(\mt^2\times\Omega))$.
\end{proposition}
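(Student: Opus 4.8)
The plan is to reduce the flow-stability estimate to a comparison argument of exactly the same kind used throughout the deterministic and stochastic existence proofs, but now comparing the solution $\Phi^\eps$ of the regularized non-local SDE (\ref{KepsSDE}) with the solution $\Phi$ of (\ref{stocEulerflow}). First I would set
\[
w^\eps_t := \int_{\mt^2}E\,|\Phi^\eps_t(x)-\Phi_t(x)|\,dx ,
\]
and apply the It\^o formula to the smoothed modulus $f_\delta(Z)=(|Z|^2+\delta)^{1/2}$ with $Z=\Phi^\eps_t(x)-\Phi_t(x)$, as in the proof of Lemma \ref{stockeylemma}. The martingale term has zero expectation, and the second-order terms are controlled by the Lipschitz property of the $\sigma_k$'s, contributing a term $C\int^t_0 w^\eps_s\,ds$ (the diffusion coefficients are the \emph{same} in both equations, so their difference is purely through the argument $\Phi^\eps$ versus $\Phi$). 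The only genuinely new contribution is the drift difference
\[
\Delta u := \int_{\mt^2}\big[K^\eps(\Phi^\eps_r(x)-\Phi^\eps_r(y))-K(\Phi_r(x)-\Phi_r(y))\big]\xi_0(y)\,dy .
\]

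The decisive step is to split $\Delta u$ by adding and subtracting $K(\Phi^\eps_r(x)-\Phi^\eps_r(y))$. This produces a \emph{regularization error}
\[
\int_{\mt^2}\big[K^\eps-K\big](\Phi^\eps_r(x)-\Phi^\eps_r(y))\,\xi_0(y)\,dy
\]
and a \emph{flow-difference} term involving only $K$, namely $K(\Phi^\eps_r(x)-\Phi^\eps_r(y))-K(\Phi_r(x)-\Phi_r(y))$. For the second term I would run verbatim the estimate of Lemma \ref{stockeylemma}: add and subtract $K(\Phi_r(x)-\Phi^\eps_r(y))$, use the key inequality (\ref{keyest}) together with the measure-preserving property of both flows, and invoke the Jensen inequality for the concave $\gamma$, arriving at a bound by $C\int^t_0\gamma(w^\eps_s)\,ds$ plus a term with the same integrand coming from the $y$-variable. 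For the regularization error I would use that $K^\eps=K*\rho_\eps$ and measure-preservation of $\Phi^\eps_r$ to reduce $\int_{\mt^2}\!\int_{\mt^2}|K^\eps-K|(\Phi^\eps_r(x)-\Phi^\eps_r(y))\,\xi_0(y)\,dy\,dx$, after the change of variables $x'=\Phi^\eps_r(x)$, $y'=\Phi^\eps_r(y)$, to $\|\xi_0\|_{L^\infty}\int_{\mt^2}|K^\eps-K|(z)\,dz = \|\xi_0\|_{L^\infty}\|K^\eps-K\|_{L^1(\mt^2)}=:\eta(\eps)$, with $\eta(\eps)\to0$ since $K\in L^1$ and $\rho_\eps$ is an approximate identity.

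Collecting these bounds yields the integral inequality
\[
w^\eps_t \le \eta(\eps)\,t + C\int^t_0\gamma(w^\eps_s)\,ds + C\int^t_0 w^\eps_s\,ds ,
\]
where I have passed $\gamma$ outside the spatial integral by Jensen and outside the expectation by concavity, exactly as in Lemma \ref{stockeylemma}. Using property (\ref{gammaprop}) to absorb the $\gamma$-term into a linear one plus the free constant $\eps$, and then applying a Gr\"onwall/comparison argument (recall the function $v$ of (\ref{v})), I obtain $\sup_{[0,T]}w^\eps_t\le C\,(\eta(\eps)+\eps')^{\exp[-CT]}$, which tends to $0$ as $\eps,\eps'\to0$. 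This is precisely convergence of $\Phi^\eps$ to $\Phi$ in $C([0,T];L^1(\mt^2\times\Omega))$. The main obstacle, and the point requiring care, is the regularization error: one must make sure the change of variables exploiting measure-preservation is applied to \emph{both} integration variables simultaneously so that the $\xi_0(y)$ weight reduces to its $L^\infty$ norm and the kernel difference collapses to the $\eps$-independent estimate $\|K^\eps-K\|_{L^1}$, which is legitimate because $K\in L^p$ for every $p<2$ and in particular $K\in L^1(\mt^2)$.
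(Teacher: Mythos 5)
Your proposal is correct and follows essentially the same route as the paper's proof: the It\^o formula applied to $f_\delta(Z)=(|Z|^2+\delta)^{1/2}$, the splitting of the drift difference by adding and subtracting $K(\Phi^\eps_r(x)-\Phi^\eps_r(y))$, the reduction of the regularization error to $\|K^\eps-K\|_{L^1(\mt^2)}$ via measure preservation, the $\gamma$-bound with Jensen for the flow-difference term, and the final comparison with the function $v$ of (\ref{v}). The only cosmetic difference is that you justify the flow-difference estimate by the explicit double add-and-subtract with (\ref{keyest}) as in Lemma \ref{keylemma}, where the paper invokes sub-additivity of $\gamma$; both yield the same bound $C\int^t_0\int_{\mt^2}\gamma(E|Z^\eps_r(x)|)\,dx\,dr$.
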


\begin{proof}
The fact that $\Phi^\eps$ and $\Phi$ belong to $C([0,T];L^1(\mt^2\times\Omega))$ can be proved easily, using similar techniques to those below. For the convergence, call $Z^\eps_t(x)=\Phi^\eps_t(x)-\Phi_t(x)$. As in the proof of Lemma \ref{stockeylemma}, we would like to apply It\^o formula for $|Z^\eps|$. Proceeding as in that proof (applying It\^o formula to $f_\delta(x)=(|x|^2+\delta)^{1/2}$), we get
\begin{eqnarray*}
E|Z^\eps_t(x)| &\leq& \int^t_0\int_{\mt^2}E\left|K^\eps(\Phi^\eps_r(x)-\Phi^\eps_r(y))-K(\Phi_r(x)-\Phi_r(y))\right||\xi_0(y)|dydr\\
&+&2L_\sigma^2\int^t_0E|Z^\eps_r(x)|dr.
\end{eqnarray*}
Integrating this inequality in $x$, since $\xi_0$ is bounded, we obtain
\begin{eqnarray}
\lefteqn{\int_{\mt^2}E|Z^\eps_t(x)|\,dx}\nonumber\\
&\le& \|\xi_0\|_{L^\infty}\int^t_0\int_{\mt^2}\int_{\mt^2}E\left|K^\eps(\Phi^\eps_r(x)-\Phi^\eps_r(y))-K(\Phi_r(x)-\Phi_r(y))\right|\,dxdydr\nonumber\\
& & +2L_\sigma^2\int^t_0\int_{\mt^2}E|Z^\eps_r(x)|\,dxdr\nonumber\\
&\le& \|\xi_0\|_{L^\infty}\int^t_0\int_{\mt^2}\int_{\mt^2}E\left|K^\eps(\Phi^\eps_r(x)-\Phi^\eps_r(y))-K(\Phi^\eps_r(x)-\Phi^\eps_r(y))\right|\,dxdydr \nonumber\\
& & +\|\xi_0\|_{L^\infty}\int^t_0\int_{\mt^2}\int_{\mt^2}E\left|K(\Phi^\eps_r(x)-\Phi^\eps_r(y))-K(\Phi_r(x)-\Phi_r(y))\right|\,dxdydr\nonumber\\
& & +2L_\sigma^2\int^t_0\int_{\mt^2}E|Z^\eps_r(x)|\,dxdr.\label{estapprox}
\end{eqnarray}
For the first integral of \eqref{estapprox}, we exploit the fact that $\Phi^\eps$ is measure-preserving, for every $\eps$; so we have
\begin{eqnarray*}
&&\hspace{-2truecm}\lefteqn{\int^t_0\int_{\mt^2}\int_{\mt^2}E\left|K^\eps(\Phi^\eps_r(x)-\Phi^\eps_r(y))-K(\Phi^\eps_r(x)-\Phi^\eps_r(y))\right|\,dxdydr}\\
&=& \int^t_0\int_{\mt^2}\int_{\mt^2}E\left|K^\eps(x-y)-K(x-y)\right|\,dxdydr\\
&\le& T\int_{\mt^2}|K^\eps(x')-K(x')|\,dx',
\end{eqnarray*}
where we have used, in the last passage, the change of variable $x-y=x'$, $x+y=y'$ (this implies a change of domain, but the $L^1$ norm of $K^\eps(x')-K(x')$ on the new domain is comparable with that on the torus). For the second integral of \eqref{estapprox}, we exploit the log-Lipschitz property of $K$ (estimate \eqref{keyest}) and get
\begin{eqnarray*}
\lefteqn{\int^t_0\int_{\mt^2}\int_{\mt^2}E\left|K(\Phi^\eps_r(x)-\Phi^\eps_r(y))-K(\Phi_r(x)-\Phi_r(y))\right|\,dxdydr}\\
&\le& L_K\int^t_0\int_{\mt^2}\int_{\mt^2}E\gamma(|Z^\eps_r(x)-Z^\eps_r(y)|)dxdydr\\
&\le& L_K\int^t_0\int_{\mt^2}\int_{\mt^2}E\left[\gamma(|Z^\eps_r(x)|)+\gamma(|Z^\eps_r(y)|)\right]dxdydr\\
&\le& 2L_K\int^t_0\int_{\mt^2}\gamma(E|Z^\eps_r(x)|)dxdr,
\end{eqnarray*}
where we have used the sub-additivity of $\gamma$ ($\gamma(|x+y|)\le \gamma(|x|)+\gamma(|y|)$, as it can be easily checked) and Jensen inequality. Putting all together, we have
\begin{equation*}
\int_{\mt^2}E|Z^\eps_t(x)|\,dx\le T\|K^\eps-K\|_{L^1(\mt^2)} +(2L_K\|\xi_0\|_{L^\infty}+2L_\sigma^2)\int^t_0\int_{\mt^2}\gamma(E|Z^\eps_r(x)|)dxdr.
\end{equation*}
Again by comparison, we get $\int_{\mt^2}E|Z^\eps_t(x)|\,dx\le z^{2L_K\|\xi_0\|_{L^\infty}+2L_\sigma^2}(t,T\|K^\eps-K\|_{L^1(\mt^2)})$, where $z$ is defined as in \eqref{z}. Since $K$ is in $L^1(\mt^2)$, $\|K^\eps-K\|_{L^1(\mt^2)}$ tends to $0$ (as $\eps\rightarrow0$), so
\begin{equation*}
\sup_{t\in[0,T]}\int_{\mt^2}E|Z^\eps_t(x)|\,dx\le \sup_{t\in[0,T]}z(t,T\|K^\eps-K\|_{L^1(\mt^2)})\rightarrow 0.
\end{equation*}
The proof is complete.
\end{proof}

Here is the result for the vorticity:

\begin{proposition}
The family $(\xi^\eps)_\eps$ converges weakly to $\xi$ (as $\eps\rightarrow0$), in the following sense. For every $\varphi$ in $C_b(\mt^2)$,
\begin{equation*}
E\left|\int_{\mt^2}\varphi\xi^\eps_tdx-\int_{\mt^2}\varphi\xi_tdx\right|\rightarrow0
\end{equation*}
for every $t$ and in $L^p([0,T])$, for any $p \in [1,\infty)$.
\end{proposition}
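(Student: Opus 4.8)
The plan is to reduce the weak convergence of the vorticities to the $L^1$-convergence of the flows established in the preceding Proposition, exploiting the Lagrangian representations $\xi^\eps_t=(\Phi^\eps_t)_\#\xi_0$ (equation (\ref{repreps})) and $\xi_t=(\Phi_t)_\#\xi_0$ (Proposition \ref{reprEuler}). Indeed, for any $\varphi\in C_b(\mt^2)$ the definition of the image measure gives
\begin{equation*}
\int_{\mt^2}\varphi\,\xi^\eps_t\,dx=\int_{\mt^2}\varphi(\Phi^\eps_t(x))\,\xi_0(x)\,dx,\qquad \int_{\mt^2}\varphi\,\xi_t\,dx=\int_{\mt^2}\varphi(\Phi_t(x))\,\xi_0(x)\,dx,
\end{equation*}
so that, writing $Z^\eps_t(x)=\Phi^\eps_t(x)-\Phi_t(x)$, one obtains
\begin{equation*}
E\left|\int_{\mt^2}\varphi\,\xi^\eps_t\,dx-\int_{\mt^2}\varphi\,\xi_t\,dx\right|\le\|\xi_0\|_{L^\infty}\int_{\mt^2}E\left|\varphi(\Phi^\eps_t(x))-\varphi(\Phi_t(x))\right|\,dx.
\end{equation*}
The whole task is therefore to show that the right-hand side tends to $0$, uniformly in $t\in[0,T]$.

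I would first treat the case in which $\varphi$ is Lipschitz with constant $L$. Then the integrand is bounded by $L\,E|Z^\eps_t(x)|$, so the displayed bound does not exceed $\|\xi_0\|_{L^\infty}L\sup_{t\in[0,T]}\int_{\mt^2}E|Z^\eps_t(x)|\,dx$, which converges to $0$ by the flow-stability Proposition, uniformly in $t$. For a general $\varphi\in C_b(\mt^2)$ I would use a density argument: since $\mt^2$ is compact, $\varphi$ is uniformly continuous and can be approximated uniformly by Lipschitz functions $\varphi_n$ with $\|\varphi-\varphi_n\|_{\infty}\le 1/n$ (for instance by mollification). Splitting
\begin{equation*}
E\left|\int_{\mt^2}\varphi\,(\xi^\eps_t-\xi_t)\,dx\right|\le E\left|\int_{\mt^2}\varphi_n\,(\xi^\eps_t-\xi_t)\,dx\right|+\|\varphi-\varphi_n\|_{\infty}\,E\int_{\mt^2}|\xi^\eps_t-\xi_t|\,dx,
\end{equation*}
and using that $\int_{\mt^2}|\xi^\eps_t-\xi_t|\,dx\le 2\|\xi_0\|_{L^\infty}|\mt^2|$ almost surely (which follows from Lemma \ref{abscont} applied to the measure-preserving flows $\Phi^\eps_t,\Phi_t$), the second term is at most $2\|\xi_0\|_{L^\infty}|\mt^2|/n$, uniformly in $t$ and $\eps$, while the first term tends to $0$ uniformly in $t$ as $\eps\to0$ for each fixed $n$ by the Lipschitz case. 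Letting first $\eps\to0$ and then $n\to\infty$ yields $\sup_{t\in[0,T]}E\left|\int_{\mt^2}\varphi\,(\xi^\eps_t-\xi_t)\,dx\right|\to0$.

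This uniform-in-$t$ estimate yields both assertions at once: the pointwise-in-$t$ convergence is immediate, and, since the quantity under the expectation is dominated by $2\|\varphi\|_{\infty}\|\xi_0\|_{L^\infty}|\mt^2|$, the convergence also holds in $L^p([0,T])$ for every finite $p$ (indeed a uniform-in-$t$ bound controls the $L^p([0,T])$ norm directly). The only genuinely delicate point I anticipate is that the test function $\varphi$ is merely continuous and not Lipschitz, so the difference $\varphi(\Phi^\eps_t)-\varphi(\Phi_t)$ cannot be controlled pointwise by $|Z^\eps_t|$; this is exactly what the uniform Lipschitz approximation, combined with the uniform $L^1$ bound on $\xi^\eps_t-\xi_t$ coming from measure preservation, is designed to circumvent. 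Everything else reduces to the flow convergence already proved.
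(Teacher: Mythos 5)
Your argument is correct, and it reaches the conclusion by a genuinely different route from the paper's. Both proofs start identically, reducing via the representations $\xi^\eps_t=(\Phi^\eps_t)_\#\xi_0$ and $\xi_t=(\Phi_t)_\#\xi_0$ to comparing $\int_{\mt^2}\varphi(\Phi^\eps_t)\xi_0\,dx$ with $\int_{\mt^2}\varphi(\Phi_t)\xi_0\,dx$. At that point the paper argues by contradiction at each fixed $t$: assuming the expectation stays above some $\delta>0$ along a sequence $\eps_n\to0$, it extracts from the $L^1(\mt^2\times\Omega)$ convergence of the flows a subsequence converging a.e., uses only the continuity of $\varphi$ together with the dominated convergence theorem to contradict the assumption, and then obtains the $L^p([0,T])$ statement by dominating in $t$. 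You instead make the estimate quantitative: you handle Lipschitz $\varphi$ directly through $|\varphi(\Phi^\eps_t)-\varphi(\Phi_t)|\le L|Z^\eps_t|$ and the (uniform-in-$t$) flow stability bound, and you pass to general continuous $\varphi$ by uniform Lipschitz approximation, absorbing the remainder with the uniform bound $\int_{\mt^2}|\xi^\eps_t-\xi_t|\,dx\le 2\|\xi_0\|_{L^\infty}|\mt^2|$ coming from Lemma \ref{abscont}. The trade-off is clear: the paper's soft argument needs nothing beyond continuity of $\varphi$ and a.e.\ subsequential convergence, but only delivers convergence at each fixed $t$; your approach costs one extra approximation layer but yields convergence \emph{uniformly} in $t\in[0,T]$, which is slightly stronger and makes the $L^p([0,T])$ assertion immediate. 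Both are complete proofs of the stated proposition.
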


\begin{proof}
First, notice that, by \eqref{repreps},
\begin{equation*}
\int_{\mt^2}\varphi\xi^\eps_tdx =\int_{\mt^2}\varphi(\Phi^\eps_t)\, \xi_0dx
\end{equation*}
and the same without $\eps$. In particular, $\varphi(\Phi^\eps_t)\, \xi_0$ is dominated a.e.\ by a constant. Now fix the time $t$. We use here a classical argument in measure theory. Suppose by contradiction that there exist $\delta>0$ and a sequence $\eps_n\rightarrow0$ such that
\begin{equation}
E\left|\int_{\mt^2}\varphi(\Phi^{\eps_n}_t)\, \xi_0dx-\int_{\mt^2}\varphi(\Phi_t)\, \xi_0dx\right|\ge\delta.\label{hpcontr}
\end{equation}
The previous proposition gives that $\Phi^{\eps_n}_t$ converges to $\Phi_t$ in $L^1(\mt^2\times\Omega)$. So we have for a subsequence $\eps_{n_k}$ that $\Phi^{\eps_{n_k}}_t$ tends to $\Phi_t$ for a.e.\ $(x,\omega)$ and similarly for $\varphi(\Phi^{\eps_{n_k}}_t)$, since $\varphi$ is continuous. Hence, by dominated convergence theorem, we get that
\begin{equation*}
E\left|\int_{\mt^2}\varphi(\Phi^{\eps_{n_k}}_t)\, \xi_0dx-\int_{\mt^2}\varphi(\Phi_t)\, \xi_0dx\right|\rightarrow0,
\end{equation*}
which contradicts \eqref{hpcontr}. We have proved convergence at $t$ fixed. Convergence in $L^p([0,T])$, for any finite $p$, follows from this result and the Lebesgue Dominated Convergence Theorem.
\end{proof}

\section{An alternative way: reduction to the deterministic case}

In this section we will see how to deduce the results in the stochastic case by a suitable transformation, assuming the deterministic case and more regularity for the $\sigma_k$'s. As we already said, we will not develop this method in all the details.

At a Lagrangian level (trajectories), consider the SDE with only the stochastic integral, namely
\begin{equation}
d\psi=\sum_k\sigma_k(\psi)\circ dW^k.\label{SDEstoch}
\end{equation}
It is well known that, if the fields $\sigma_k$'s are regular enough ($C^3$ should be sufficient, $C^2$ is assumed in every ``classical'' result) and divergence-free, then there exists a stochastic flows $\psi$ of $C^{1,1}(\mt^2)$ measure-preserving diffeomorphisms solving \eqref{SDEstoch} (a $C^{1,1}$ diffeomorphism is a $C^1$ map with Lipschitz-continuous derivatives, together with its inverse). The inverse flow $\psi_t^{-1}$ satisfies
\begin{equation*}
d\psi^{-1}_t(x)=-\sum_k\sigma_k(x)\cdot\nabla\psi^{-1}_t(x)\circ dW^k.
\end{equation*}

Now let $\Phi$ be the Euler stochastic flow (solving \eqref{stocEulerflow}) and make a change of variable, composing with $\psi_t^{-1}$: call
\begin{equation}
\tl{\Phi}(t,x,\omega)=\psi_{t,\omega}^{-1}(\Phi_{t,\omega}(x)).\label{changevar}
\end{equation}
Using the It\^o-Kunita-Wentzell formula, we obtain the following random ODE for $\tl{\Phi}$:
\begin{equation*}
d\tl{\Phi}_t=(D\psi_t)^{-1}u^\Phi_t(\psi_t(\tl{\Phi}_t))\,dt,
\end{equation*}
where $u^\Phi$ is as in \eqref{expr_u}. This equation reads also as
\begin{equation}
d\tl{\Phi}_t=\tl{u}^{\tl{\Phi}}_t(\tl{\Phi}_t)\,dt,\label{randomODE}
\end{equation}
where
\begin{equation*}
\tl{u}^{\tl{\Phi}}(t,x,\omega)=(D\psi_{t,\omega}(x))^{-1}\int_{\mt^2}K(\psi_{t,\omega}(x)-\psi_{t,\omega}(\tl{\Phi}_{t,\omega}(y)))\, \xi_0(y)dy.
\end{equation*}
The equation \eqref{randomODE} is not \eqref{Eulerflow}, but the drift $\tl{u}^{\tl{\xi}}$ has the same regularity properties of the drift $u^\Phi$ of \eqref{Eulerflow}, provided $\psi$ is a flow of $C^{1,1}(\mt^2)$ diffeomorphisms, since the term $D\psi_t$ appears; here we need $\sigma$ to be at least $C^2$. Thus, one could proceed as follows:
\begin{enumerate}
\item first we can repeat the argument in the deterministic part, to get the existence and the uniqueness for $\tl{\Phi}$ satisfying \eqref{randomODE}; since $\psi$ is a regular flow adapted to the Brownian filtration, this implies the strong existence and the strong uniqueness for $\Phi$ itself (plus the homeomorphism property), i.e.\ Theorem \ref{mainflow};
\item then Section \ref{stocEulervort_sec} applies and we deduce Theorem \ref{mainvort}.
\end{enumerate}

This can be seen also at an Eulerian level (velocity field). Heuristically, with the change of variable \eqref{changevar}, we should consider, as new vorticity, $\tl{\xi}_t=\xi_0(\tl{\Phi}_t^{-1})=\xi_t(\psi_t)$. Indeed, let $\xi$ be a solution to \eqref{Euler Strat} and let $\psi$ be as above, call
\begin{equation*}
\tl{\xi}(t,x,\omega)=\xi(t,\psi(t,x,\omega),\omega).
\end{equation*}
Applying, this time formally, the It\^o-Kunita-Wentzell formula, we obtain the following random PDE for $\tl{\xi}$:
\begin{equation}
\partial_t\tl{\xi}+\tl{u}^{\tl{\xi}}\cdot\nabla\tl{\xi}=0,\label{randomPDE}
\end{equation}
where
\begin{equation*}
\tl{u}^{\tl{\xi}}=(D\psi_{t,\omega}(x))^{-1}\int_{\mt^2}K(\psi_{t,\omega}(x)-\psi_{t,\omega}(y))\tl{\xi}_t(y)dy.
\end{equation*}
This fact, as well as its converse (the passage from $\tl{\xi}$ to $\xi$), can be made rigorous in the following way. First, we take $\xi^\eps=\xi*\rho_\eps$ (where $\rho_\eps$ are even compactly supported mollifiers) and write the equation for $\xi^\eps$ (using commutators only for the $\sigma_k$'s):
\begin{equation*}
\partial_t\xi^\eps+(u^\xi\cdot\nabla\xi)^\eps+\sum_k\sigma_k\cdot\nabla\xi^\eps\circ\dot{W}^k-\sum_k[\sigma_k\cdot\nabla,\rho_\eps*]\xi\circ\dot{W}^k=0.
\end{equation*}
Then we multiply this equation by $\varphi(\psi^{-1})$, where $\varphi$ is any regular test function on $\mt^2$. In this way we obtain \eqref{randomPDE} for $\xi^\eps(\psi)$, with $(u^\xi\cdot\nabla\xi)^\eps(\psi)$ in place of $\tl{u}^{\tl{\xi}}\cdot\nabla\tl{\xi}$  and with the additional commutator term $\sum_k[\sigma_k\cdot\nabla,\rho_\eps*]\xi(\psi)\circ\dot{W}^k$. Finally we let $\eps$ go to $0$, getting \eqref{randomPDE}.

Again \eqref{randomPDE} is not the deterministic Euler vorticity equation (\eqref{Euler Strat} with $\sigma=0$), but its drift $\tl{u}^{\tl{\xi}}$ has the same regularity properties of the drift $u^\xi$ of \eqref{Eulerflow}, provided $\psi$ is a flow $C^{1,1}(\mt^2)$ diffeomorphisms. So one can repeat the arguments in the deterministic case (flows and commutator lemma), to get the existence and the uniqueness for the random PDE \eqref{randomPDE}, then the strong existence and the strong uniqueness for \ref{Euler Strat} follow immediately.

Finally we mention that the passage between $\xi$ and $\tl{\xi}$ can be seen at a more abstract level; this is a classical remark, due at least to Lamperti, Doss and Sussmann (\cite{Lam}, \cite{Dos}, \cite{Sus}). Suppose to have an SPDE of the form
\begin{equation*}
d\xi+A(\xi)\, \xi\,dt+\sum_kB_k\xi\circ dW^k=0,
\end{equation*}
where $A(x)$ and $B_k$ are linear operators (for simplicity assume $B_k$ time-independent); in our case, $A(\xi)=u^\xi\cdot\nabla$ and $B_k=\sigma_k\cdot\nabla$. Consider formally
\begin{equation*}
\tl{\xi}_t=e^{\sum_kB_kW^k_t}\xi_t;
\end{equation*}
in our case, this corresponds to the composition $\xi(\psi)$. Then formally $\tl{\xi}$ satisfies the following random PDE:
\begin{equation*}
\partial_t\tl{\xi}+e^{\sum_kB_kW^k_t}A(e^{-\sum_kB_kW^k_t}\tl{\xi})e^{-\sum_kB_kW^k_t}\tl{\xi}=0.
\end{equation*}
Thus we have reduced an SPDE to a random PDE, which can be treated through deterministic techniques.

\appendix

\section{A useful inequality}

This section contains a proof of an auxiliary inequality used in a crucial way twice in our paper.
\begin{lemma}\label{lem-ineq} Assume that $A,B>0$ and $T>0$.
Suppose that $(\rho_n)_{n=0}^\infty$ is a sequence of continuous nonnegative functions defined on the interval $[0,T]$ such that for every $\eps\in (0,1)$ and every $n$,
\begin{equation}\label{ineq-01}
\rho^n_t\leq A \log \frac1\eps\int^t_0\rho^{n-1}_s\,ds + \eps Bt,\;\;  t\in [0,T].
\end{equation}
Then
\begin{eqnarray}
\nonumber \rho^n_t &\leq&  \frac{(At)^n}{\sqrt{2\pi n}} \sup_{s\in [0,t]}\vert \rho^0_{s}\vert  + B  t (e^{At-1})^n
,\;\;\; t\in [0,T].
\label{ineq-02}
\end{eqnarray}
\end{lemma}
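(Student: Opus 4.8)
The plan is to exploit that the family of inequalities (\ref{ineq-01}) holds for \emph{every} $\eps\in(0,1)$, reduce it to a single iterable recursion, and then optimise the free parameter. Writing $L=\log\frac1\eps$, so that $L$ ranges over $(0,+\infty)$, the hypothesis reads: for every $L>0$ and every $n$,
\begin{equation*}
\rho^n_t\le AL\int^t_0\rho^{n-1}_s\,ds+e^{-L}Bt,\qquad t\in[0,T].
\end{equation*}
I would iterate this $n$ times, keeping for the moment one common value of $L$, and use that the $n$-fold iterated integral over the simplex $\{0<s_n<\dots<s_1<t\}$ equals $t^n/n!$. This yields a closed bound for $\rho^n_t$ as a sum of two pieces: a \emph{homogeneous} piece driven by $M_0:=\sup_{s\in[0,t]}|\rho^0_s|$, of the form $\frac{(ALt)^n}{n!}\,M_0$, and an \emph{inhomogeneous} piece driven by the source $e^{-L}Bt$, of the form $Be^{-L}\sum_{i=0}^{n-1}\frac{(ALt)^i\,t}{(i+1)!}$.

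The two pieces are then estimated separately. For the homogeneous piece I would invoke Stirling's inequality $n!\ge\sqrt{2\pi n}\,(n/e)^n$: taking $L$ of order $n/e$ converts $\frac{(ALt)^n}{n!}$ into $\frac{(At)^n}{\sqrt{2\pi n}}$, which is exactly the first term of (\ref{ineq-02}). For the inhomogeneous piece I would bound the series by $\sum_{i\ge0}\frac{(ALt)^i}{i!}=e^{ALt}$, so that $Be^{-L}\sum_i(\dots)\le Bt\,e^{-L}e^{ALt}=Bt\,e^{L(At-1)}$; taking $L$ of order $n$ turns $e^{L(At-1)}$ into $(e^{At-1})^n$, the second term of (\ref{ineq-02}). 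In either regime the decisive feature is that $At<1$ makes both pieces tend to $0$ as $n\to\infty$.

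The delicate point, and the main obstacle, is that the two estimates call for \emph{different} sizes of $L$ (roughly $n/e$ for the homogeneous term and $n$ for the inhomogeneous one), so no single common value can make both sharp simultaneously. I would therefore recast the argument as an induction on $n$: insert the bound (\ref{ineq-02}) for $\rho^{n-1}$ into the recursion, integrate, and choose $\eps=\eps_n$ \emph{depending on $n$} (and on $A$, $B$, $t$, and the relative size of $M_0$ and $B$) so that the resulting one-step right-hand side stays below (\ref{ineq-02}) at level $n$. The heart of the matter is to verify that the minimum over $L$ of this one-step bound does not exceed the target; here one genuinely uses the freedom that $\eps$ is arbitrary in $(0,1)$ — when the $M_0$-contribution is large one may take $L$ small to suppress it, the source term then lying well within the now-generous budget — together with the slack available in Stirling's inequality. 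Establishing this balancing inequality for all $n$, and disposing of the base case $n=1$ (where the factor $\sqrt{2\pi n}$ is harmless) directly, is the technical core of the proof.
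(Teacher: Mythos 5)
Your first step --- iterating the recursion with one common $L=\log\frac1\eps$ to obtain $\rho^n_t\le \frac{(ALt)^n}{n!}\sup_{s\le t}|\rho^0_s|+B\eps t\sum_{k=0}^{n-1}\frac{(ALt)^k}{k!}\le \frac{(ALt)^n}{n!}\sup_{s\le t}|\rho^0_s|+Bt\,e^{L(At-1)}$ --- is exactly the paper's inequality (\ref{ineq-03}); the paper then simply sets $\eps=e^{-n}$ (i.e.\ $L=n$) and applies Stirling, with no further balancing. The gap in your proposal is that the step you yourself call ``the technical core'' is never carried out: the balancing inequality in your induction is asserted to be checkable but is not checked, so no proof is actually given. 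Worse, the specific fallback you sketch --- inducting on the target bound (\ref{ineq-02}) itself, i.e.\ inserting the level-$(n-1)$ estimate into the one-step recursion and then optimising $\eps_n$ --- does not close. The source contribution it generates is $AL\,B\int_0^t s\,(e^{As-1})^{n-1}ds+e^{-L}Bt$, whose first summand is already of order $Bt\,(e^{At-1})^{n-1}$; since $e^{At-1}<1$ precisely when $At<1$, which is the only regime in which the lemma is useful, this \emph{exceeds} the target $Bt\,(e^{At-1})^{n}$, and no choice of $\eps_n$ can repair it because that term does not involve $\eps_n$ at all. The paper's induction works because it is performed on the $\eps$-uniform iterated-integral bound, keeping the free parameter alive until the very end and spending it only once; collapsing to (\ref{ineq-02}) at each step destroys exactly the freedom that makes the argument succeed.

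The tension you point out in the homogeneous term is, however, real: with $L=n$, Stirling's inequality $n!\ge\sqrt{2\pi n}\,(n/e)^n$ gives $\frac{(Ant)^n}{n!}\le\frac{(Aet)^n}{\sqrt{2\pi n}}$, so an extra factor $e^n$ appears relative to the stated constant (the paper's final displayed step drops it). But this only affects constants: downstream the lemma is used to produce geometric decay for $T$ small, and replacing $(At)^n$ by $(Aet)^n$ merely shrinks the admissible $T$. The right response is to weaken the constant in the first term of (\ref{ineq-02}), not to abandon the single-choice-of-$\eps$ strategy in favour of an induction that does not close.
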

\begin{proof}[Proof of Lemma \ref{lem-ineq}]
By Induction one can show that for every $n\in\mathbb{N}^\ast$ and every $\eps\in (0,1)$
\begin{eqnarray}\nonumber
\rho^n_t &\leq& (A(-\log\eps))^n\int^t_0\ldots\int^{s_2}_0\rho^0_{s_1}ds_1\ldots ds_n \\
\nonumber
&+& B\eps t\sum_{k=0}^{n-1}(A(-\log\eps))^k \int^t_0\ldots\int^{s_2}_0ds_1\ldots ds_k
\\&\leq &\frac{(A(-\log\eps) t)^n}{n!} \sup_{s\in [0,t]}\vert \rho^0_{s}\vert  + B\eps t\sum_{k=0}^{n-1}\frac{(A(-\log\eps)  t)^k}{k!},\;\;\; t\in [0,T].
\label{ineq-03}
\end{eqnarray}
Let us take $n\in\mathbb{N}$. Choose $\eps=e^{-n}$. Then by the above inequality and Stirling's inequality,
\begin{eqnarray}
\nonumber \rho^n_t &\leq& \frac{(An t)^n}{n!} \sup_{s\in [0,t]}\vert \rho^0_{s}\vert  + B e^{-n} t\sum_{k=0}^{n-1}\frac{(An  t)^k}{k!}
\\
\nonumber
&\leq& \frac{(eAn t)^n}{n!} \sup_{s\in [0,t]}\vert \rho^0_{s}\vert  + B  t (e^{At-1})^n\\
&\leq& \frac{(eAt)^n}{\sqrt{2\pi n}} \sup_{s\in [0,t]}\vert \rho^0_{s}\vert  + B  t (e^{At-1})^n
,\;\;\; t\in [0,T].
\label{ineq-04}
\end{eqnarray}
This concludes the proof.
\end{proof}
\begin{corollary}In the framework of the above Lemmma, if $eAT^\ast<1$, then $\sup_{t\in [0,T^\ast]}\rho^n_t \to 0$.
\end{corollary}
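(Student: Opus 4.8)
The plan is to turn the one-step recursive bound (\ref{ineq-01}) into a closed-form estimate by iterating it $n$ times, and then to collapse the resulting expression by a single well-chosen value of the free parameter $\eps$. Writing $L_\eps=\log\frac1\eps$, I would first prove, by induction on $n\ge1$, the intermediate bound
\begin{equation*}
\rho^n_t\le (AL_\eps)^n\int_0^t\!\!\int_0^{s_n}\!\!\cdots\int_0^{s_2}\rho^0_{s_1}\,ds_1\cdots ds_n+B\eps t\sum_{k=0}^{n-1}\frac{(AL_\eps t)^k}{k!}.
\end{equation*}
The case $n=1$ is exactly the hypothesis (\ref{ineq-01}). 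For the inductive step I substitute the bound for $\rho^{n-1}_s$ into $AL_\eps\int_0^t(\,\cdot\,)\,ds$; since every function in sight is nonnegative and $f\mapsto AL_\eps\int_0^t f\,ds$ is monotone, the inequality is preserved, the homogeneous term acquires one more iterated integration, and the source sum gains one summand (using $\int_0^t\!\cdots\!\int_0^{s_2}ds_1\cdots ds_k=t^k/k!$ to evaluate the iterated integral of the constant $1$). This bookkeeping of the inhomogeneous term through the iteration is the only step requiring genuine care.

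Next I would estimate the two pieces separately. The $n$-fold iterated integral of $\rho^0$ is at most $\tfrac{t^n}{n!}\sup_{s\in[0,t]}|\rho^0_s|$, so the homogeneous term is bounded by $\tfrac{(AL_\eps t)^n}{n!}\sup_{s}|\rho^0_s|$; and the partial exponential series obeys $\sum_{k=0}^{n-1}\tfrac{(AL_\eps t)^k}{k!}\le e^{AL_\eps t}=\eps^{-At}$, so the inhomogeneous term is bounded by $Bt\,\eps^{1-At}$. The crux is then the choice $\eps=e^{-n}$, which is admissible for $n\ge1$ because $e^{-n}\in(0,1)$ and which forces $L_\eps=n$. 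With this choice the source term becomes $Bt\,e^{-n(1-At)}=Bt\,(e^{At-1})^n$, matching the second term of (\ref{ineq-02}) exactly, while the homogeneous term becomes $\tfrac{(Ant)^n}{n!}\sup_s|\rho^0_s|$, which I would control by Stirling's inequality $n!\ge\sqrt{2\pi n}\,(n/e)^n$ to obtain the first term of (\ref{ineq-02}).

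The main obstacle is not the induction itself, which is routine once the correct intermediate form is guessed, but the simultaneous optimization: the free parameter $\eps$ must be tuned so that \emph{both} the homogeneous Stirling-type term and the inhomogeneous exponential term decay geometrically in $n$, and the choice $\eps=e^{-n}$ is precisely what balances $L_\eps$ against the factorial produced by the iterated integrals. With the bound (\ref{ineq-02}) in hand, the Corollary is immediate: if $At<1$ then $e^{At-1}<1$, so $Bt(e^{At-1})^n\to0$, and the factor multiplying $\sup_s|\rho^0_s|$ also tends to $0$ as $n\to\infty$; hence $\sup_{t\in[0,T^*]}\rho^n_t\to0$.
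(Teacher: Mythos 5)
Your proposal is correct and follows essentially the same route as the paper: it reproves Lemma \ref{lem-ineq} by the identical induction leading to the paper's intermediate estimate (\ref{ineq-03}), makes the same choice $\eps=e^{-n}$ with Stirling's inequality to obtain (\ref{ineq-02}), and then concludes exactly as the paper's one-line argument does, namely that for $AT^\ast<1$ both terms of (\ref{ineq-02}) are $n$-th terms of convergent geometric series (since $AT^\ast<1$ and $e^{AT^\ast-1}<1$), uniformly in $t\in[0,T^\ast]$. The only difference is that you re-derive the Lemma rather than citing its conclusion, which the Corollary's phrasing already places at your disposal.
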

\begin{proof}
If $eAT^\ast<1$, then $\sup_{t\in [0,T^\ast]}\rho^n_t$ is bounded from above by a sum of the $n$-th terms of two convergent geometrical series.
\end{proof}

\section{Proof of inequality \eqref{keyest}}

We give a sketch of the proof of inequality \eqref{keyest}. Call $G$ is the Green function of the Laplace operator $-\Delta$ on the torus $\mt^2=[-1/2,1/2]^2$ (with periodic boundary condition). We will prove:

\begin{proposition}\label{Green}
The function $G$ is in $C^\infty(\mt^2\setminus\{0\})$. Its behaviour in $0$ is given by
\begin{equation*}
|G(x)|\le C(-\log|x|+1)
\end{equation*}
and that of its derivative $D^{(n)}$, $n$ positive integer, by
\begin{equation*}
|D^nG(x)|\le C_n(|x|^{-n}+1).
\end{equation*}
\end{proposition}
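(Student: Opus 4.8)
The plan is to isolate the singularity of $G$ at the origin and to compare it with the fundamental solution of the Laplacian on $\mr^2$. Recall that $G$ is characterized, up to an additive constant, as the solution on $\mt^2$ of $-\Delta G=\delta_0-1$, the constant $-1$ being forced by the compatibility condition on the torus (of unit area). Since the planar fundamental solution $\Gamma(x)=-\frac{1}{2\pi}\log|x|$ satisfies $-\Delta\Gamma=\delta_0$, I expect $G$ and $\Gamma$ to differ by a function that is smooth across $0$; the whole proposition then reduces to the explicit behaviour of $\Gamma$ and its derivatives. The signs play no role, as all the assertions are about $|G|$ and $|D^nG|$.

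First I would settle smoothness away from the origin. On $\mt^2\setminus\{0\}$ one has $-\Delta G=-1$, a smooth right-hand side, so hypoellipticity of the Laplacian immediately gives $G\in C^\infty(\mt^2\setminus\{0\})$. To analyse the singularity, I would fix a smooth cutoff $\chi$ equal to $1$ on a ball around $0$ and supported in a slightly larger ball, and set $H:=G-\chi\Gamma$, the product $\chi\Gamma$ being a well-defined periodic function supported near $0$. A Leibniz computation gives, in the distributional sense,
\begin{equation*}
-\Delta(\chi\Gamma)=\chi\,\delta_0-2\nabla\chi\cdot\nabla\Gamma-(\Delta\chi)\Gamma=\delta_0-2\nabla\chi\cdot\nabla\Gamma-(\Delta\chi)\Gamma,
\end{equation*}
using $\chi\equiv1$ near $0$. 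Hence
\begin{equation*}
-\Delta H=(\delta_0-1)-\bigl(\delta_0-2\nabla\chi\cdot\nabla\Gamma-(\Delta\chi)\Gamma\bigr)=-1+2\nabla\chi\cdot\nabla\Gamma+(\Delta\chi)\Gamma.
\end{equation*}
The Dirac masses cancel, and the remaining right-hand side is smooth because $\nabla\chi$ and $\Delta\chi$ are supported away from the origin, where $\Gamma$ is smooth. Elliptic regularity then yields $H\in C^\infty(\mt^2)$, so that $G=\chi\Gamma+H$ with $H$ globally smooth; in particular $G(x)=-\frac{1}{2\pi}\log|x|+H(x)$ on the ball where $\chi\equiv1$.

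Finally I would read off the estimates from this decomposition. Near $0$ one has $|G(x)|\le\frac{1}{2\pi}|\log|x||+\|H\|_\infty\le C(-\log|x|+1)$. For the derivatives, $D^n\log|x|$ is homogeneous of degree $-n$ (each differentiation lowers by one the homogeneity of $\nabla\log|x|=x/|x|^2$) and smooth on the unit sphere, whence $|D^n\log|x||\le C_n|x|^{-n}$; together with the boundedness of $D^nH$ near $0$ this gives $|D^nG(x)|\le C_n(|x|^{-n}+1)$. Away from the origin both bounds are immediate, since $G$ is smooth on the compact complement of any neighbourhood of $0$, and the additive $+1$ absorbs the resulting constants.

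The genuinely technical points, and the only places where care is needed, are the justification of the cutoff computation at the level of distributions and the elliptic-regularity (hypoellipticity) step upgrading $H$ to a smooth function; once the decomposition $G=\chi\Gamma+H$ is in hand, the remaining estimates are the elementary calculus of $\log|x|$.
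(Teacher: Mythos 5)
Your proof is correct, but it follows a genuinely different route from the paper. You subtract a cut-off copy of the free-space fundamental solution $\Gamma(x)=-\tfrac{1}{2\pi}\log|x|$, cancel the Dirac masses by a distributional Leibniz computation, and invoke hypoellipticity of $\Delta$ to conclude that the remainder $H=G-\chi\Gamma$ is globally smooth; all the stated bounds then come from the homogeneity of $D^n\log|x|$. The paper instead represents $G$ through the heat semigroup, $G(x)=-\int_0^{+\infty}v(t,x)\,dt$ with $v_0=\delta_0-1$, uses the Fourier expansion of $v$ for $t\ge 1$ and the Gaussian (Poisson-summation) expansion for $t\le 1$, and isolates the $l=0$ Gaussian term $\int_0^1\frac{1}{4\pi t}e^{-|x|^2/4t}\,dt$, whose explicit asymptotics give the $-\log|x|$ and $|x|^{-n}$ behaviour by direct integral estimates. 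Your argument is shorter and leans on standard elliptic machinery (Weyl's lemma / interior regularity), which is exactly what the paper's computation is designed to avoid: the heat-kernel route is entirely self-contained, produces the sharp asymptotics $G_4(x)\sim-\log|x|$ rather than just upper bounds, and transfers to settings such as the 3D torus (the Batt--Rein argument the authors cite) where the same Gaussian sums work verbatim. The two genuinely delicate points in your write-up --- justifying $\Delta(\chi\Gamma)=\chi\Delta\Gamma+2\nabla\chi\cdot\nabla\Gamma+\Gamma\Delta\chi$ in $\mathcal{D}'$ (which is fine because $\Gamma$ and $\nabla\Gamma=-\tfrac{1}{2\pi}x/|x|^2$ are locally integrable in dimension two) and the hypoellipticity step --- are standard, so there is no gap; note only that with the paper's sign convention one has $\Delta G=\delta_0-1$, so your $H$ is really $G+\chi\Gamma$, which, as you say, is immaterial for the absolute-value estimates.
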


Assuming this result, we get that $|K(x)|\le C_1(|x|^{-1}+1)$. This implies the estimate \eqref{keyest} by an elementary argument (see \cite{MarPul}, Appendix 2.3).

Proposition \ref{Green} is a special case (at least for $n\le 2$) of a general fact, valid for compact $C^\infty$ Riemannian manifolds of finite dimensions, see \cite[section 4.2]{Aub}, for the statement and a proof. We give here a different proof, taken in spirit from \cite{BatRei} (which studies the 3D case).

\begin{proof}[Sketch of the proof]
It is easy to see that the Fourier expansion of $G$ is
\begin{equation*}
G(x)=-\frac{1}{4\pi^2}\sum_{k\in\mathbb{Z}^2,k\neq0}\frac{1}{|k|^2}e^{2\pi ik\cdot x}
\end{equation*}
Since this expression seems not helpful in the analysis of regularity around $0$, we will use the solution $v$, in $L^2([0,T]\times\mt^2)$, of the heat equation
\begin{equation*}
\partial_tv=\Delta v,
\end{equation*}
with initial condition $v_0=\delta_0-1$ (more precisely, $v_t\rightharpoonup\delta_0-1$ as $t\rightarrow0$). It is easy to see that this unique solution can be expressed in two ways: one with its Fourier expansion, which is
\begin{equation}
v(t,x)=\sum_{k\in\mathbb{Z}^2,k\neq0}e^{-4\pi^2|k|^2t}e^{2\pi ik\cdot x},\label{vFourier}
\end{equation}
the other with Gaussian densities, that is
\begin{equation}
v(t,x)=-1+\frac{1}{4\pi t}\sum_{l\in\mathbb{Z}^2}\exp{\frac{-|x-l|^2}{4t}}.\label{vGauss}
\end{equation}
One verifies, e.g.\ using \eqref{vFourier}, that
\begin{eqnarray*}
G(x)&=&-\int^{+\infty}_0v(t,x)\,dt=-\int^{+\infty}_1v(t,x)\,dt-\int^1_0v(t,x)\,dt\\&=:&-G_1(x)-G_2(x).
\end{eqnarray*}
Now $G_1$ is in $C^\infty(\mt^2)$, as one can see from its Fourier expansion, again from \eqref{vFourier}. For $G_2$ we exploit \eqref{vGauss}:
\begin{eqnarray*}
G_2(x)&=&\left(-1+\int^1_0\frac{1}{4\pi t}\sum_{l\in\mathbb{Z}^2,l\neq0}\exp{\frac{-|x-l|^2}{4t}}dt\right)
\\&+&\int^1_0\frac{1}{4\pi t}\exp{\frac{-|x|^2}{4t}}=:G_3(x)+G_4(x),
\end{eqnarray*}
the sum being between functions on $\mr^2$ (though $x$ is still in $[-1/2,1/2]^2$). The first addend $G_3$ is $C^\infty$ on an open neighborhood of $[-1/2,1/2]^2$ (e.g.\ $]-3/4,3/4[^2$): indeed, for any $n$ nonnegative integer, we have
\begin{eqnarray*}
&&\hspace{-1truecm}\lefteqn{\int^1_0\left|D^{(n)}\frac{1}{4\pi t}\sum_{l\in\mathbb{Z}^2,l\neq0}\exp{\frac{-|x-l|^2}{4t}}\right|dt}\\
&\lesssim & \int^1_0t^{-(2n+1)}\sum_{l\neq0}\exp{\frac{-|x-l|^2}{4t}}dt \\
& \lesssim& \int^1_0t^{-(2n+1)}\sum^\infty_{h=1}\exp{\frac{-h}{ct}}dt\\
&\sim& \int^1_0t^{-(2n+1)}e^{-1/(ct)}dt<+\infty,
\end{eqnarray*}
for some $c>0$ independent of $x$, when $x$ is in $]-3/4,3/4[^2$. The second addend $G_4$ is in $C^\infty(]-3/4,3/4[^2\setminus\{0\})$. So $G$ is in $C^\infty(\mt^2\setminus\{0\}$. For the behaviour in $0$, this is given by the behaviour of $G_4$, which is computed by standard techniques. We have, with the change of variable $s=|x|^{-1/2}t$,
\begin{equation*}
G_4(x)\sim \int^{|x|^{-1/2}}_0s^{-1}e^{-1/(4s)}ds\sim -\log|x|
\end{equation*}
and, for $n\ge1$,
\begin{equation*}
|D^{(n)}G_4(x)|\sim |x|^{-n}\int^{|x|^{-1/2}}_0s^{-(2n+1)}e^{-1/(4s)}ds\sim |x|^{-n}.
\end{equation*}
The proof is complete.
\end{proof}

\end{document}